\newcommand{\dist}{\text{dist}} 
\newcommand{\abs}[1]{\left\vert#1\right\vert}
\newcommand{\diam}{\text{diam}}
\newcommand{\diag}{\text{diag}}
\newcommand{\trace}{\text{trace}}
\newcommand{\loc}{\text{loc}}
\newcommand{\R}{{\mathbb R}} 
\newcommand{\e}{\varepsilon}
\newcommand{\de}{\delta}
\newcommand{\tl}{\tilde\lambda}
\newcommand{\tL}{\tilde\Lambda}
\newcommand{\ld}{\lambda}
\newcommand{\Ld}{\Lambda}
\newcommand{\Sn}{\mathbb S^n}
\newcommand{\eps}{\epsilon}
\newcommand{\p}{\partial}
\newenvironment{myindentpar}[1]%
{\begin{list}{}%
         {\setlength{\leftmargin}{#1}}%
         \item[]%
}
{\end{list}}
\theoremstyle{plain}
\newtheorem{thm}{Theorem}[section]
\newtheorem{lem}[thm]{Lemma}
\newtheorem{prop}[thm]{Proposition}
\newtheorem{rem}[thm]{Remark}
\theoremstyle{definition}
\newtheorem{defn}[thm]{Definition}
\title[ Harnack inequality via sliding paraboloids]{On the Harnack inequality for degenerate and singular elliptic equations with unbounded lower order terms
via sliding paraboloids}
\author{Nam Q. Le}
\address{Department of Mathematics, Indiana University,
Bloomington, 831 E 3rd St, IN 47405, USA.}
\email{nqle@indiana.edu}
\thanks{The research of the author was supported in part by the National Science Foundation under grant DMS-1500400.}
\subjclass[2010]{35B45, 35B65, 35J60, 35J70}
\keywords{Harnack inequality, linearized Monge-Amp\`ere equation, degenerate and singular equations, sliding paraboloids, covering lemma}
\def\l@subsection{\@tocline{2}{0pt}{2.5pc}{5pc}{}}
\begin{document}

\begin{abstract}
We use the method of sliding paraboloids
to establish a Harnack inequality for linear, degenerate and singular elliptic equation with unbounded lower order terms. 
The equations we consider include uniformly elliptic equations and linearized Monge-Amp\`ere equations. Our argument allows us to prove the doubling estimate for functions which, at points
of large gradient, are solutions of (degenerate and singular) elliptic equations with unbounded drift.
\end{abstract}
 \maketitle

\noindent

\section{Introduction and statement of the main result}

This paper is concerned with establishing a Harnack inequality for linear, degenerate and singular elliptic equations with unbounded lower order terms via the method of sliding paraboloids originated
from works of Cabr\'e \cite{Cab} and Savin \cite{Sa}. 
The equations we consider include uniformly elliptic equations as in Krylov-Safonov's theory \cite{KS2} and degenerate and singular elliptic, linearized Monge-Amp\`ere equations as in Caffarelli-Guti\'errez's theory
\cite{CG}. Our argument allows us to prove the doubling estimate for functions which, at points
of large gradient, are solutions of (degenerate and singular) elliptic equations with unbounded drift. These functions have been recently studied by Imbert-Silvestre \cite{IS} and
Mooney \cite{Mooney}.

We begin by recalling the theory of Krylov and Safonov. In 1979, Krylov-Safonov \cite{KS2} established their celebrated Harnack inequality for 
nonnegative solutions to linear, uniformly elliptic equations in non-divergence form
\begin{equation}\sum_{i, j=1}^{n}a^{ij}(x)\frac{\p^2 v}{\p x_i \p x_j}(x)=0
 \label{unieq}
\end{equation}
where the eigenvalues of the measurable, symmetric coefficient matrix $A(x)= \left(a^{ij}(x)\right)_{1\leq i, j\leq n}$ are bounded between two positive constants $\lambda$ and $\Lambda$, that is
\begin{equation}
 \label{unilam}
 \lambda |\xi|^2 \leq \sum_{i, j=1}^na^{ij}(x)\xi_i\xi_j\leq \Lambda |\xi|^2~\text{for all}~x,~\text{and }\xi\in\R^n.
\end{equation}
Krylov-Safonov's Harnack inequality states that if $v\in W^{2, n}_{\loc}(\Omega)$ is a
nonnegative solution of (\ref{unieq}) in a domain $\Omega\subset \R^n$, then
for any ball $B_{2r}(x_0)\subset\subset\Omega$, we have
\begin{equation}
 \label{HI1}
 \sup_{B_r(x_0)} v\leq C(n, \lambda, \Lambda) \inf_{B_r(x_0)} v. 
\end{equation}
There are many proofs of Krylov-Safonov's Harnack inequality. In this paper, we focus on their original proof which has been successfully extended
 to other equations including fully nonlinear equations, degenerate and singular equations, and equations with lower order terms.

 The idea of the proof is the following; see also Caffarelli-Cabr\'e \cite{CC}. In the first step, we show that the distribution function of $v$, $|\{v>t\}|$ decays 
 like $t^{-\varepsilon}$ ($L^{\e}$ estimate) for some small positive constant $\e$ depending only on $n,\lambda,$ and $\Lambda$. 
Thus, up to universal constants depending only on $n,\lambda,$ and $\Lambda$, $v$ is comparable to $v(x_0)$ in $
B_r(x_0)$ except a set of very small measure. 
In the second step, we show that
if $v(x_\ast)$ is much larger than $v(x_0)$ at some point $x_\ast$ in $B_r(x_0)$, then by the same method (now applying to $C_1- C_2 v$), we also find that $v$ is much larger 
than $v(x_0)$ in a subset
of $B_r(x_0)$ of positive measure. This
contradicts the $L^{\e}$ estimate in the first step.

There are three main estimates in the proof of the $L^{\e}$ estimate.
\begin{myindentpar}{1cm}
{\bf Measure estimate:} This estimate says that if $v\in W^{2, n}_{\loc}(\Omega)$ is a
nonnegative supersolution to (\ref{unieq}) in $B_{3r}(x_0)\subset\subset \Omega$ and $v$ is small at least at a point, say, less than or equal to 1, in $B_r(x_0)$ then,
the superlevel set $\{v>M(n,\lambda,\Lambda)\}\cap B_{2r}(x_0)$  in the double ball $B_{2r}(x_0)$ covers at most $(1-\delta (n,\lambda,\Lambda))$ fraction of its measure, for some universal constants $\delta
(n,\lambda,\Lambda)\in (0, 1)$ and $M(n,\lambda,\Lambda)>0$.
This estimate is not difficult and is traditionally based on the Aleksandrov-Bakelman-Pucci (ABP) maximum principle (see, for example, Gilbarg-Trudinger \cite[Theorem 9.1]{GT}). In the ABP estimate, we need the lower bound 
on the determinant of the coefficient matrix which is the case here.\\
{\bf  Doubling estimate:} This estimate says that if $v\in W^{2, n}_{\loc}(\Omega)$ is a
nonnegative supersolution to (\ref{unieq}) in $B_{3r}(x_0)\subset\subset \Omega$ and $v$ is at least 1 in $B_r(x_0)$ then in the double ball $B_{2r}(x_0)$, the value of
$v$ is not smaller than a positive constant $c(n,\lambda,\Lambda)$. This estimate is traditionally based on the construction of subsolutions.\\
{\bf Power decay estimate:} This estimate establishes the geometric decay of $\{v>t\}\cap B_r(x_0)$. It is based on the measure and doubling estimates and a covering lemma which is a consequence
of geometric properties of Euclidean balls. 
\end{myindentpar}
The next two sections will be devoted to reviewing some interesting and fairly new ideas in obtaining the measure and doubling estimates, 
used  in the proofs of Harnack inequality for non-divergence form elliptic equations, other than the ones mentioned above.
\subsection{On measure estimate}
In \cite{Cab}, Cabr\'e devised a new method to prove measure estimate in order to prove a Harnack inequality on Riemannian manifolds with nonnegative sectional curvature. The idea
is to 
estimate the measure of the superlevel set of supersolutions by sliding some specific functions from below and estimating the measure of 
the set of contact points. In \cite{Cab}, Cabr\'e uses the distance function squared which is a natural replacement of quadratic 
polynomials in a Riemannian manifold. 

In \cite{Sa}, Savin introduced the method of sliding paraboloids (see Definition \ref{parabo_def}) from below to obtain the measure estimate thus bypassing the use of ABP estimate. Here paraboloids of constant opening are slided
from below till they touch the graph of $v$ for the first time. These are the points where we use the equation and obtain the lower bound for the measure of the 
touching points.  Since the values of the solution at the touching points are universally bounded from above, we obtain the measure estimate. This method was later extended by Wang \cite{W} to parabolic equations. 

There has been an increasing number of papers inspired by Savin's method. We would like to mention three recent papers \cite{AS, IS, Mooney} that are most relevant to the content of our paper.

In \cite{AS}, Armstrong and Smart study regularity and stochastic homogenization of fully nonlinear equations without uniform ellipticity. In order to carry out their analysis,
they prove new deterministic regularity estimates in which the dependence on a uniform upper bound for the ellipticity is replaced by that of its $L^n$-norm.
One of the key arguments in \cite{AS} involves touching from below by translation of the singular function $|x|^{-\alpha}$, for suitably large $\alpha$, which
is a subsolution of the equation under consideration. 

In \cite{IS}, Imbert and Silvestre establish a
Harnack inequality for functions which, at points of large gradient, are solutions to linear, uniformly elliptic equations
\begin{equation}a^{ij} v_{ij} + b\cdot Dv=f
 \label{ISeq}
\end{equation}
where the drift $b$ is bounded. 
The measure estimate in \cite{IS} was established by touching the graph of $v$ from below by translation of the cusps functions of the form $-|x|^{1/2}$.
In \cite{Mooney}, Mooney establishes the
Harnack inequality for these equations
with $L^n$ drift $b$. One main contribution of \cite{Mooney} is a new proof of the measure estimate in \cite{IS} that uses sliding of paraboloids from below at all scales
and a set decomposition algorithm.
\subsection{On doubling estimate}
An attractive feature in Mooney's proof of the doubling estimate for (\ref{ISeq}) in \cite{Mooney} is to slide barriers $\varphi(x)$, of the form $|x|^{-\alpha}$ for suitably 
large $\alpha$,  from below until they touch the graph of $v$ and 
use the equation at the contact points. The barriers are indeed subsolutions of (\ref{ISeq})
when the drift $b$ and the nonhomogeneous term $f$ are bounded. One subtle point in this method is the translation invariant property of barriers. Roughly speaking, this says that if
$$a^{ij}(x) \varphi_{ij}(x)\geq 1
~\text{in}~ B_4(0)\setminus B_1 (0)$$
then we also have $$a^{ij}(x)\varphi_{ij}(x-y)\geq 1~\text{in}~B_3(0)\setminus B_2 (0) ~\text{for } y\in B_1(0).$$
This property holds for uniformly elliptic equations like (\ref{ISeq}) with $(a^{ij})$ satisfying (\ref{unilam}) while it does not hold for degenerate elliptic equations such as the linearized Monge-Amp\`ere equations; see (\ref{LMAeq})
in the next section.
\subsection{Doubling estimate via sliding paraboloids}
Motivated by Mooney's approach to the doubling estimate and Savin's sliding paraboloid method in obtaining the measure estimates, we slide generalized 
paraboloids (see Definition \ref{gparabo_def}) from below to obtain the doubling estimate for 
linear, degenerate and singular elliptic equations that include uniformly elliptic equations as in Krylov-Safonov's theory \cite{KS2} and 
linearized Monge-Amp\`ere equations as in Caffarelli-Guti\'errez's theory \cite{CG}.
This in turn also works for equations with unbounded lower order terms. The integrability assumptions on the lower terms are similar to those of uniformly elliptic equations
studied in the work of
Trudinger \cite{Tr}.

The key observation is simple: if $v$ is a nonnegative supersolution of an elliptic equation
$a^{ij} v_{ij}\leq 0$
then $-v^{-\e}$ is also a supersolution. This property does not depend on the ellipticity of the coefficient matrix $(a_{ij})$. Thus, we can slide 
(generalized) paraboloids from below to touch the graph of
$-v^{-\e}$. However, due to the degeneracy and singularity nature of the equations considered in this paper, in order to obtain the doubling estimate, we need to modify the paraboloids by certain convex function at points where there is a lack of
uniform ellipticity. This modification is also needed in the traditional proof using subsolution construction; see the function $h_\e$ and $h_\delta$ in
the proofs of Lemmas \ref{double_lem1} and \ref{double_lem}.

Our argument also applies to functions which are solutions to equations of linearized Monge-Amp\`ere type, but only at points where the gradient is large; 
see Remark \ref{largeD_rem} and Lemma \ref{large_lem}.

\subsection{Harnack inequality via sliding paraboloids}
Combining our new argument in establishing the doubling estimate with Savin's measure estimates, we can prove a Harnack inequality for
linear, degenerate and singular elliptic equations with unbounded lower order terms via the method of sliding paraboloids. We hope that this unified treatment of the estimates in proving Harnack
inequality will have further applications in degenerate equations.
\subsection{Statement of the main result}
We now state precisely the main result proved in this paper. Let $\lambda,\Lambda,\tl$ and $\tL$ be given positive constants with $\lambda\leq \Lambda$ and $\tl\leq \tL$.
Let $\Omega\subset\R^n$ be an open, convex and bounded domain. Assume that a strictly convex function $u$ satisfies the Monge-Amp\`ere equation
\begin{equation}
\lambda\leq \det D^{2} u\leq \Lambda~\text{in}~\Omega
\label{pinch1}
\end{equation}
in the sense of Aleksandrov; see, for example, Guti\'errez \cite[Definition 1.2.1]{G}. Then $u\in C^{1,\alpha_\ast}$ for some $\alpha_\ast$ depending only on $n,\lambda,\Lambda$; see, for example \cite[Theorem 5.4.5]{G}
and Theorem \ref{C1alpha} in Section \ref{Prelim_sec}. 
We additionally assume throughout the paper, as in \cite{CG, Mal}, that $u\in C^2(\Omega)$. This is because crucial steps in proving 
our Harnack inequality require the second-order differentiations in $u$; see, for instance, the steps leading to (\ref{uvD22}) and (\ref{Dxy}). The estimates in the paper, however, do not depend on the $C^2$ norm of $u$ nor
any modulus of continuity of $\det D^2 u$. Quantitative
estimates involving $u$ depend only on $n$, $\lambda$ and $\Lambda$ because for these, we use the results in Sections 
\ref{MA_sec}--\ref{ink_sec} which hold for $u$ satisfying (\ref{pinch1}) only.
Throughout, we denote the cofactor matrix of the Hessian matrix $D^2 u= \left(u_{ij}\right)_{1\leq i, j\leq n}$ by 
$$U = (\det D^2 u)(D^2 u)^{-1}\equiv (U^{ij})_{1\leq i, j\leq n}.$$
Let $A= (a^{ij})_{1\leq i, j\leq n}$  be a symmetric matrix satisfying
\begin{equation}\tilde\lambda U\leq A\leq\tilde\Lambda U.
 \label{unilamU}
\end{equation}
We will denote by $S_u(x, h)$ the section of $u$ centered at $x\in\Omega$ with height $h$; see Definition \ref{sec_def}.

Our main result is the following Harnack inequality for the linear, degenerate and singular elliptic equations, which are $(D^2 u)^{-1}$-like, with unbounded lower order terms. By abuse of terminology,
we call these equations
linearized Monge-Amp\`ere equations associated with the potential $u$.
\begin{thm} [Harnack inequality]
\label{CGthm} Assume that (\ref{pinch1}) and (\ref{unilamU}) are satisfied in $\Omega$. 
Suppose that $v\geq 0$ is a $W^{2, n}_{\loc}(\Omega)$ solution of 
\begin{equation}a^{ij}v_{ij} + b\cdot Dv + cv=f
 \label{LMAeq}
\end{equation}
in a section $S:=S_u (x_0, 2h)\subset\subset \Omega$ where 
$f\in L^n (S), c\in L^n(S)$ and $b\in L^p(S)$ 
with $p>\frac{n(1+\alpha_\ast)}{2\alpha_\ast}$. 
Then
\begin{equation}\sup_{S_{u}(x_0, h)} v\leq C(n, \lambda, \Lambda,\tl,\tL)^{N(h, h_0)} \left(\inf_{S_{u}(x_0, h)} v+ h^{1/2}\|f\|_{L^n(S)}\right).
 \label{HI2}
\end{equation}
The function $N(h, h_0)$ and $h_0$ are defined as follows:
\begin{equation*}
 N(h, h_0)=\max\left\{1, \left(\frac{h}{h_0}\right)^{n/2}\right\}
\end{equation*}
where, for some universal constant $\e_5(n,p, \lambda,\Lambda,\tl,\tL)$,
$$h_0^{\frac{\alpha_\ast}{1+\alpha_\ast}-\frac{n}{2p}}\|b\|_{L^p(S)}\leq \e_5,~\text{and}~h_0^{1/2} \|c\|_{ L^n (S)}\leq \e_5.$$
\end{thm}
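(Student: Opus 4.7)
\medskip
\noindent\textbf{Proof proposal.} The plan is to follow the Krylov--Safonov three--estimate scheme (measure, doubling, power decay) carried out on sections of the Monge--Amp\`ere potential $u$ in place of Euclidean balls, with sliding (generalized) paraboloids replacing all barrier constructions. First I would establish a \emph{basic} Harnack inequality on a single small section $S_u(x_0,h_0)$ with universal constants, and then chain this basic Harnack along a collection of $\lesssim (h/h_0)^{n/2}$ overlapping sections of height $\sim h_0$ that cover $S_u(x_0,h)$; this explains the factor $N(h,h_0)$ in (\ref{HI2}), using the volume estimate $|S_u(x,t)|\asymp t^{n/2}$ valid under (\ref{pinch1}), and the $f$-term $h^{1/2}\|f\|_{L^n(S)}$ accumulates from an ABP-type contribution at each link of the chain.

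\smallskip
The basic Harnack on $S_u(x_0,h_0)$ reduces in the standard way to a power-decay $L^\e$ estimate for nonnegative supersolutions, which itself splits into a measure estimate and a doubling estimate. For the measure estimate I would use Savin's method, adapted to sections via John's lemma and the affine normalization $T$ associated to $S_u(x_0,h_0)$: slide concave paraboloids of fixed opening from below until they touch the graph of $v$, and at each contact point exploit the equation together with the non-divergence area formula and the lower bound on $\det A$ coming from (\ref{unilamU})--(\ref{pinch1}). The unbounded lower-order terms $b$ and $c$ are absorbed on the contact set by H\"older's inequality, and the quantitative smallness
\begin{equation*}
h_0^{\alpha_\ast/(1+\alpha_\ast) - n/(2p)}\|b\|_{L^p(S)} + h_0^{1/2}\|c\|_{L^n(S)} \leq \e_5
\end{equation*}
is exactly what is required to absorb them, the exponent $\alpha_\ast/(1+\alpha_\ast)$ arising from the $C^{1,\alpha_\ast}$ regularity of $u$ (Theorem \ref{C1alpha}) controlling the Jacobian of $T$.

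\smallskip
The novel ingredient is the doubling estimate. The key observation is that whenever $v\geq 0$ satisfies $a^{ij}v_{ij}\leq 0$, the function $w:=-v^{-\e}$ also satisfies $a^{ij}w_{ij}\leq 0$, because
\begin{equation*}
a^{ij}w_{ij} \;=\; \e\, v^{-\e-1}\, a^{ij}v_{ij} \;-\; \e(\e+1)\, v^{-\e-2}\, a^{ij}v_i v_j,
\end{equation*}
and both terms are $\leq 0$ using only $A\geq 0$, not uniform ellipticity; this is why the argument survives the $U$-degeneracy. I would then slide \emph{generalized paraboloids} from below against $w$, each such paraboloid being a standard concave paraboloid plus a convex correction $h_\e$ (resp.~$h_\delta$) built out of the potential $u$ itself, as advertised in Lemmas \ref{double_lem1} and \ref{double_lem}. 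The role of the convex correction is to restore enough contact-point curvature in the directions where $U$ degenerates, so that the touching inequality converts into a pointwise upper bound on $v^{-\e}$, i.e., a pointwise lower bound on $v$ on $S_u(x_0,2h)$. Because paraboloids are translation-free -- in contrast to the $|x|^{-\alpha}$ barriers of \cite{Mooney} -- no translation invariance of the coefficient matrix is used, which is the decisive advantage of this approach in the linearized Monge--Amp\`ere setting. The lower-order terms are again absorbed by the smallness of $h_0$.

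\smallskip
I expect the doubling step to be the main obstacle, and within it the drift term $b\cdot Dv$ when $p$ is only marginally above $n/2$ (after the $\alpha_\ast$ twist). At each paraboloid-contact point $Dv$ is pointwise controlled by the opening of the sliding paraboloid, but absorbing $b\cdot Dv$ still requires an integral estimate of $|b|$ against a small, normalized contact set; this is precisely where the exponent $\alpha_\ast/(1+\alpha_\ast)-n/(2p)$ enters, through the Jacobian of $T$ and the $C^{1,\alpha_\ast}$-shape of sections. Once these absorptions are carried out uniformly in the normalization, the basic Harnack on $S_u(x_0,h_0)$ follows from the standard dichotomy applied to $v$ and to a suitable affine combination $C_1-C_2 v$ with rescaling, and the scaling/chaining argument of the first paragraph delivers the full estimate (\ref{HI2}).
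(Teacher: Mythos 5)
Your plan matches the paper's proof essentially step for step: Savin-type sliding of generalized paraboloids for the measure estimate, the observation that $-v^{-\e}$ remains a supersolution (with the convex corrections $h_\e$, $h_\delta$ inserted where $\|D^2u\|$ is large) for the doubling estimate, John-lemma rescaling to absorb the $L^p$ drift with exactly the exponent $\frac{\alpha_\ast}{1+\alpha_\ast}-\frac{n}{2p}$, and a Vitali chaining of $\lesssim (h/h_0)^{n/2}$ small sections producing the factor $N(h,h_0)$. The only cosmetic discrepancy is that the term $h^{1/2}\|f\|_{L^n(S)}$ enters not through an ABP-type contribution at each link but by normalizing $v$ by $\inf v+\tau+C\|f\|_{L^n}$ before running the $L^\e$/barrier comparison; otherwise this is the paper's argument.
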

\begin{rem}
 We note that, under the assumption (\ref{pinch1}), 
$D^2 u$ is not bounded and thus (\ref{LMAeq}), though elliptic, can be both degenerate and singular. In fact, the best regularity we can have from (\ref{pinch1}) only is $D^2 u\in L^{1+\varepsilon}(\Omega)$ where 
$\varepsilon$ is a small constant depending on $n,\lambda$ and $\Lambda$. This follows from De Philippis-Figalli-Savin's and Schmidt's interior $W^{2, 1+\varepsilon}$ estimates 
\cite{DPFS, Sch} for the Monge-Amp\`ere equation. 
\end{rem}
\begin{rem}~
\begin{myindentpar}{0.8cm}
(i) We will sometimes use (\ref{LMAeq}) in the form
$\trace (A D^2 v) + b\cdot Dv + cv=f.$\\
(ii) When $u=\frac{1}{2}|x|^2$, $U= I_n$ and (\ref{LMAeq}) becomes a linear, uniformly elliptic equations in non-divergence form. In this case, we can take $\alpha^{\ast}=1$.\\
(iii) When $\tl=\tL=1$, $A=U$ and 
 (\ref{LMAeq}) becomes a linearized Monge-Amp\`ere equation with lower order terms.
\end{myindentpar}
\end{rem}
\begin{rem}
 When $b=0$ and $c=0$, we can take $h_0=\infty$ and $N(h, h_0)=1$ in Theorem \ref{CGthm}. 
 In this case, Theorem \ref{CGthm} was discovered in the fundamental work of Caffarelli-Guti\'errez \cite[Theorem 5]{CG} for $f=0$ with the extension to the nonhomogeneous case by Trudinger-Wang
 \cite[Theorem 3.4]{TW3}.
\end{rem}

Theorem \ref{CGthm} is also related to a recent paper by Maldonado \cite{Mal}. In \cite{Mal}, Maldonado 
established an interior Harnack inequality for nonnegative solutions to the nonhomogeneous linearized Monge-Amp\`ere equation with lower order terms where the drift term 
has a special structure and the coefficient $c$ has a sign. Certain boundedness on the lower order term was assumed. More precisely, Maldonado considered equations of the type
\begin{equation}\trace \left((D^2 u)^{-1}D^2 v\right) + b\cdot \left((D^2 u)^{-1/2} Dv\right) + cv = f.
 \label{DCLMA}
\end{equation}
Here $u$ is a strictly convex, twice continuously differentiable function on $\R^n$ whose associated Monge-Amp\`ere measure $\mu_{u}(x):=\det D^2 u(x)dx$ is stricly positive and satisfies
a doubling property; the functions  $b, c, f$ satisfy
$$b, c, f\in L^{\infty};~c\leq 0; ~\text{and } (D^2 u)^{-1/2}b\in L^n.$$
These conditions on the lower order terms allow the use of the
ABP maximum principle to (\ref{DCLMA}).
The Harnack inequality in \cite{Mal} extends the fundamental work of Caffarelli-Guti\'errez \cite{CG} in the case with no lower order terms and the 
Monge-Amp\`ere measure $\mu_{u}(x)$ satisfies  the $(\mu_{\infty})$-condition which is stronger than the doubling condition. 

The proof of Harnack inequality in \cite{Mal} was
proved by means of real analysis techniques in 
spaces of homogeneous type. These include exploiting the variational structure of (\ref{DCLMA}), local-BMO estimates and Poincar\'e-type inequality within the Monge-Amp\`ere 
quasi-metric structure. This proof
differs from that in \cite{CG} and in this paper. Here, we also slide paraboloids and then using normalization techniques as in \cite{CG}.
\begin{rem}
We require the high integrability of the drift term $b$ in Theorem \ref{CGthm} in order to obtain a small $L^n$-norm of the rescaled drift when we rescale (\ref{LMAeq}); see
Section \ref{AIP_sec} and Lemma \ref{res_lem}. This will allow us to apply the
 measure estimate in  Lemma \ref{meas_lem} and the doubling estimate in Lemma \ref{double_lem} to the rescaled equation.
\end{rem}

\begin{rem}
 We expect that the measure localization technique of Safonov \cite{Safo} can be combined with the sliding paraboloids argument in this paper to prove the Harnack inequality
 for solutions to (\ref{LMAeq}) when $b\in L^{\frac{n(1+\alpha_*)}{2\alpha_*}}.$ It seems that a new method is required to establish the Harnack inequality for 
 (\ref{LMAeq}) when $b\in L^n$.
\end{rem}

In this paper, a positive constant depending only on $p, \lambda,\Lambda,\tl,\tL$ and the dimension $n$  of $\R^n$ is called {\it universal}. We denote universal constants by $c_1, C, C_1, C_2, K, M,\delta, \cdots,$ etc, 
and their values may change from line to
line. We use $C(\cdot, \cdot)$ to emphasize the dependence of $C$ on the parameters in the parentheses. Universal constants in Sections \ref{measure_sec}
and \ref{double_sec} do not depend on $p$. Repeated indices are summed, such as $a^{ij} v_{ij}=\sum_{i, j=1}^n a^{ij} v_{ij}.$

We can assume that all functions $u$, $v$ in this paper are smooth and thus solutions can be interpreted in the classical sense. However, our 
estimates do not depend on the assumed smoothness but only on the given structural constants. Thus, by a standard approximation argument (see, for example \cite[Section 9.1]{GT}),
our estimates also hold for $v\in W^{2, n}_{\loc}.$

The rest of the paper is organized as follows. In Section \ref{Prelim_sec}, we present some notation and background results needed in the rest of the paper.
The following sections of the paper use these results but otherwise are self-contained and quite elementary.
We prove the measure estimate, Lemma \ref{meas_lem}, in Section \ref{measure_sec}. In Section \ref{double_sec}, we prove the doubling estimate, Lemma \ref{double_lem}.
In Section \ref{decay_sec}, we prove the power decay estimate, Theorem \ref{decay_thm}. Theorem \ref{CGthm} will be proved in
the final Section \ref{Harnack_sec}. 

\section{Preliminaries}
\label{Prelim_sec}
In this section, we present some notation and background results needed in the rest of the paper. These 
include generalized paraboloid associated with a convex function, section of a convex function, the Aleksandrov maximum principle, John's lemma, basic linear algebra facts, key geometric properties of solutions to the Monge-Amp\`ere equation, the Vitali covering lemma and the growing ink-spots lemma in the context of 
the Monge-Amp\`ere equation.
\subsection{Notation}
We begin with notation used throughout the paper.\\
\begin{tabular}{lll}
$\quad$ $|x|:$ && the Euclidean length of $x=(x_1,\cdots, x_n)\in\R^n$, $\displaystyle |x|=\left(\sum_{i=1}^n x_i^2\right)^{1/2},$\\
$\quad$ $x\cdot y:$ && the dot product of $x=(x_1,\cdots, x_n), y= (y_1,\cdots, y_n) \in\R^n$, $\displaystyle x\cdot y= \sum_{i=1}^n x_i y_i,$\\
$\quad$ $B_r(y):$ &&the open ball centered at $y\in\R^n$ with radius $r>0,$\\
$\quad$ $u^{\pm}$: && $u^{\pm}=\max (\pm u, 0)$ for $u\in\R$,\\
$\quad$ $\overline E:$ && the closure of a set $E,$\\  
$\quad$ $\p E:$ && the boundary of a set $E,$\\  
$\quad$ $|E|:$ && the Lebesgue measure of a set $E,$\\  
$\quad$ $\diam (E):$ && the diameter of a bounded set $E,$\\ 
$\quad$ $\dist (\cdot, E):$ && the distance function from a closed set $E,$\\ 
$\quad$ $Du:$ && the gradient of a function $u:\R^n\rightarrow \R$, $Du= (u_1,\cdots, u_n)\equiv \left(\frac{\partial u}{\partial x_i}\right)_{1\leq i\leq n},$\\
$\quad$ $D^2u:$ && the Hessian matrix of $u:\R^n\rightarrow \R$, $D^2u= \left(u_{ij}\right)_{1\leq i, j\leq n}\equiv \left(\frac{\partial^2 u}{\partial x_i \partial x_j}\right)_{1\leq i, j\leq n},$\\
$\quad$ $\Sn:$ && the space of symmetric $n\times n$ matrices,\\
$\quad$ $A, B\in\Sn$, $A\geq B$: && if the eigenvalues of $A-B$ are nonnegative,\\
$\quad$ $\trace (M):$ && the trace of a matrix  $M$;\\
$\quad$ $\|M\|:$ && the Hilbert-Schmidt norm of $M\in \Sn$: $\|M\|^2=\trace(M^T M)$.\\
\end{tabular}

\subsection{Basic definitions}
\begin{defn}[Paraboloid]
\label{parabo_def}
 A paraboloid $P$ of opening $a$ and vertex $y\in \R^n$ is a quadratic function of the form
 $$P(x)= C-\frac{a}{2}|x-y|^2$$
 for some constant $C\in \R$.
\end{defn}
\begin{defn}[Generalized paraboloid]
\label{gparabo_def}
 A generalized paraboloid $P$, associated with a convex, $C^1$ function $u$, of opening $a$ and vertex $y\in \R^n$ is a function of the form
 $$P(x)= C-a[u(x)- u(y)-Du(y)\cdot (x-y)]$$
 for some constant $C\in \R$.
\end{defn} 
Clearly, Definition \ref{parabo_def} is a special case of Definition \ref{gparabo_def} with $u(x) =\frac{1}{2}|x|^2$.

We will frequently use the following lemma, due to Fritz John \cite[Theorem 3]{John}.
\begin{lem}[John's lemma]\label{John_lem}
Let $\Omega\subset \R^{n}$ be a compact, convex set with nonempty interior. Then there is an affine transformation $Tx = Ax + b$ where the $n\times n$ matrix $A$ is invertible and $b\in \R^n$ such that
$$B_1 (0)\subset T(\Omega)\subset B_n(0).$$
\end{lem}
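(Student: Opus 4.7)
The plan is to take $T$ to be the affine transformation sending a maximum-volume ellipsoid inscribed in $\Omega$ to the closed unit ball $\overline{B_1(0)}$. Existence of such a \emph{John ellipsoid} follows from a standard compactness argument: parametrize closed ellipsoids in $\R^n$ as $E_{x_0, M} = \{x : (x-x_0)^T M(x-x_0) \leq 1\}$ with $M \in \Sn$ positive definite, so that $|E_{x_0, M}| = \omega_n / \sqrt{\det M}$ where $\omega_n := |B_1(0)|$. Since $\Omega$ has nonempty interior some $E_{x_0, M}$ of positive volume lies in $\Omega$; since $\Omega$ is bounded the constraint $E_{x_0, M} \subset \Omega$ keeps $|x_0|$ bounded and keeps the eigenvalues of $M$ bounded away from $0$ (else $E_{x_0,M}$ would exceed $\mathrm{diam}(\Omega)$ in some direction), so a maximizer $E^*$ exists by continuity. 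Parametrizing $E^* = \{y_0 + Bz : |z| \leq 1\}$ with $B$ symmetric positive definite and setting $Tx := B^{-1}(x - y_0)$, one obtains $T(E^*) = \overline{B_1(0)}$, hence $B_1(0) \subset T(\Omega)$; since affine maps preserve volume ratios, $\overline{B_1(0)}$ is still the maximum-volume ellipsoid inscribed in $T(\Omega)$.

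After this normalization it remains to show $T(\Omega) \subset \overline{B_n(0)}$. I argue by contradiction: suppose some $p \in T(\Omega)$ has $|p| > n$, and after a rotation take $p = d\,e_1$ with $d > n$. By convexity of $T(\Omega)$,
\begin{equation*}
T(\Omega) \supset K := \mathrm{conv}\bigl(\overline{B_1(0)} \cup \{p\}\bigr) = \overline{B_1(0)} \cup \{x : 1/d \leq x_1 \leq d,\ |x'| \leq (d - x_1)/\sqrt{d^2 - 1}\},
\end{equation*}
where $x' = (x_2, \ldots, x_n)$, and the lateral tangent cone from $p$ meets $\partial B_1(0)$ along the circle $\{x_1 = 1/d,\ |x'| = \sqrt{d^2-1}/d\}$. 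For parameters $\alpha > 0$ and $t > 0$ to be chosen, consider the axisymmetric perturbation
\begin{equation*}
E_t := \Bigl\{ x \in \R^n : \tfrac{(x_1 - t)^2}{(1 + t)^2} + \tfrac{|x'|^2}{1 - \alpha t} \leq 1 \Bigr\},
\end{equation*}
whose volume is $|E_t| = \omega_n (1 + t)(1 - \alpha t)^{(n-1)/2} = \omega_n \bigl(1 + (1 - \tfrac{(n-1)\alpha}{2}) t + O(t^2)\bigr)$; this strictly exceeds $\omega_n$ for small $t > 0$ provided $\alpha < 2/(n-1)$.

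The main step is to verify that $E_t \subset K$ holds whenever $\alpha \geq 2/(d-1)$, by splitting on $x_1$. For the cone portion $\{x_1 \geq 1/d\}$, since the long axis of $E_t$ is aligned with $p$, the tangent cone to $E_t$ from $p$ has half-aperture $\tan \theta' = b_t/\sqrt{L_t^2 - a_t^2}$ with $a_t = 1+t$, $b_t = \sqrt{1-\alpha t}$, $L_t = d - t$, and the inequality $\tan \theta' \leq \tan \theta = 1/\sqrt{d^2-1}$ reduces to $(1 - \alpha t)(d^2 - 1) \leq (d - t)^2 - (1 + t)^2 = (d^2 - 1) - 2t(d+1)$, equivalently $\alpha(d - 1) \geq 2$. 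For the ball portion $\{x_1 \leq 1/d\}$, using the support function $h_{E_t}(x) = t x_1 + \sqrt{(1+t)^2 x_1^2 + (1-\alpha t)(1-x_1^2)}$ for $x \in \partial B_1(0)$, the condition $h_{E_t}(x) \leq 1$ simplifies (after squaring and dividing by $t > 0$) to the $t$-independent convex quadratic inequality
\begin{equation*}
f(x_1) := (2+\alpha) x_1^2 + 2x_1 - \alpha \leq 0 \quad \text{for all } x_1 \in [-1, 1/d].
\end{equation*}
Since $f(-1) = 0$, $f(1/d) = (d+1)(2 - \alpha(d-1))/d^2 \leq 0$ iff $\alpha \geq 2/(d-1)$, and $f$ is convex, the maximum of $f$ on $[-1, 1/d]$ is attained at an endpoint and is $\leq 0$ under this hypothesis. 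Thus the compatible range $2/(d-1) \leq \alpha < 2/(n-1)$ is nonempty precisely when $d > n$; picking such $\alpha$ and $t > 0$ small yields $E_t \subset K \subset T(\Omega)$ with $|E_t| > \omega_n$, contradicting the maximality of $\overline{B_1(0)}$. Hence $|p| \leq n$ for every $p \in T(\Omega)$.

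The main obstacle is the geometric construction of the enlargement $E_t$: the choices of center $c_t = t$ and longitudinal semi-axis $a_t = 1 + t$ anchor $E_t$ to $\partial B_1(0)$ at $-e_1$ (where $f(-1) = 0$), and it is this anchoring that couples the ball-inclusion and cone-inclusion constraints in just the right way to yield the sharp threshold $d > n$. A naive enlargement (e.g., with center fixed at the origin) would produce mismatched thresholds and miss the sharp constant $n$; the elegance of John's theorem lies precisely in this coincidence of two constraints meeting at the same critical value.
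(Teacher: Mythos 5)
Your proof is correct, and it supplies something the paper deliberately omits: the paper states this lemma as a background fact with a citation to F.~John's 1948 article and gives no argument. What you give is the standard maximal-inscribed-ellipsoid proof, and the details check out. The compactness argument for existence is fine (along a maximizing sequence $\det M$ is bounded above because the volume is bounded below, and the eigenvalues of $M$ are bounded below because $\Omega$ is bounded, so the family is precompact). The verification that $E_t\subset K$ is the crux, and your two cases are exactly the verification of $h_{E_t}(u)\leq h_K(u)=\max(1,\,d\,u_1)$ over the two families of supporting halfspaces of $K$: directions $u$ with $u_1\leq 1/d$ (where $h_K=1$, your ``ball portion'') and directions with $u_1\geq 1/d$ (where the constraint is containment in the tangent cone of $K$ at $p$, your ``cone portion'', reduced by axisymmetry to comparing half-apertures). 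Both reductions are exact in $t$ --- the $t^2$ terms cancel in the squared support-function inequality and the cone condition is $t$-independent --- so the two thresholds $\alpha\geq 2/(d-1)$ and $\alpha<2/(n-1)$ are genuinely sharp and are compatible precisely when $d>n$, which is the heart of John's theorem. Two cosmetic remarks: what you actually prove is $T(\Omega)\subset\overline{B_n(0)}$, which is the correct sharp statement (the regular simplex shows the open ball in the lemma as quoted cannot literally hold; this imprecision is in the paper's statement, not your proof, and is harmless for its applications); and the case $n=1$ is trivial and should be dispatched separately since $2/(n-1)$ is undefined there.
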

In Lemma \ref{John_lem}, the transformation $T$ is said to {\it normalize} $\Omega$. 
\begin{defn} [Normalized convex set]
 We say that a convex set $\Omega\subset\R^n$ is normalized if 
 $B_1(0)\subset\Omega\subset B_n(0).$
\end{defn}
A central notion in the theory of Monge-Amp\`ere equation is that of sections, introduced and investigated by 
Caffarelli; see, for example \cite{C2}. They will play the role that balls do in the uniformly elliptic equations.
\begin{defn}[Section]
\label{sec_def}
Let $u\in C^{1}(\Omega)\cap C(\overline{\Omega})$ be a convex function in $\Omega$. The section of $u$ centered at $x$ with height $h$, denoted by
$S_{u}(x, h)$, is defined by
$$S_u(x, h) =\{y\in\overline{\Omega}: u(y) < u(x) + D u(x)\cdot (y-x) + h\}.$$
\end{defn}
If $u(x)=\frac{1}{2}|x|^2$ in $\overline{\Omega}$ then $S_u(x, h)= B_{\sqrt{2h}}(x)\cap \overline{\Omega}$.
\subsection{Some useful estimates}
We will use the following gradient estimate for a convex function.
\begin{lem} [Gradient estimate]
\label{slope-est}
Let $\Omega\subset \R^n$ be a bounded convex set and $u\in C^1(\Omega)\cap C(\overline{\Omega})$ a convex function in $\overline{\Omega}$. If $x\in\Omega$, then
$$|Du(x)|\leq \frac{\max_{y\in\p\Omega} u (y) -u(x)}{\dist (x, \p\Omega)}.$$
\end{lem}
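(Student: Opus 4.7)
The plan is to reduce the estimate to a one-dimensional convexity argument along the ray from $x$ in the direction of the gradient. First I dispense with the trivial case $Du(x)=0$ and assume $|Du(x)|>0$, so the unit vector $e := Du(x)/|Du(x)|$ is defined. Since $\Omega$ is bounded and convex and $x\in\Omega$, the ray $\{x+te:t\geq 0\}$ leaves $\overline{\Omega}$ at a well-defined first point $y = x + Te\in\p\Omega$ with $T>0$. By the very definition of $\dist(x,\p\Omega)$ as the infimum of $|x-z|$ over $z\in\p\Omega$, and because $y\in\p\Omega$, we automatically have $T=|y-x|\geq \dist(x,\p\Omega)$.

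The key step is then to invoke the supporting hyperplane inequality for the convex $C^1$ function $u$ at the base point $x$, evaluated at $y$:
\[
u(y) \;\geq\; u(x) + Du(x)\cdot (y-x) \;=\; u(x) + T\,|Du(x)|.
\]
Rearranging gives $|Du(x)|\leq (u(y)-u(x))/T$. Since $y\in\p\Omega$, we may bound $u(y)\leq \max_{z\in\p\Omega} u(z)$; and since the numerator is nonnegative while $T\geq \dist(x,\p\Omega)>0$, replacing $T$ in the denominator by $\dist(x,\p\Omega)$ only enlarges the right-hand side. Combining these observations yields the claimed inequality
\[
|Du(x)| \;\leq\; \frac{\max_{z\in\p\Omega} u(z) - u(x)}{\dist(x,\p\Omega)}.
\]

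I do not foresee a genuine obstacle here, since the argument is essentially one-dimensional once the ray is chosen. The only minor technical items that need care are: (i) verifying that the ray in direction $e$ from $x$ really meets $\p\Omega$ at a finite first point, which follows from boundedness of $\Omega$ together with convexity (so that $\overline{\Omega}$ is a compact convex set containing $x$ in its interior up to the boundary); and (ii) confirming that the distance traveled along this particular ray dominates the global distance to $\p\Omega$, which is immediate from the infimum definition. Neither point requires a nontrivial calculation.
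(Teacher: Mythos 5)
Your proof is correct and follows essentially the same route as the paper's: both rest on the supporting-hyperplane inequality $u(z)\geq u(x)+Du(x)\cdot(z-x)$ applied at a point displaced from $x$ in the direction of $Du(x)$, combined with the fact that the boundary maximum of the convex function dominates. The only cosmetic difference is that the paper stays at an interior point at distance just under $\dist(x,\p\Omega)$ (using an $\e$-regularization that also covers the case $Du(x)=0$), whereas you travel along the ray all the way to a boundary point $y$ and then use $|y-x|\geq\dist(x,\p\Omega)$; both variants are fine.
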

\begin{proof}[Proof of Lemma \ref{slope-est}] We include here a simple proof for reader's convenience.
Let $r:=\dist (x,\p\Omega)$. Then, for each $\e>0$, 
$y_0 = x + r \frac{Du(x)}{|Du(x)|+\e}\in\Omega$. Hence, by convexity
$$\max_{y\in\p\Omega} u(y)\geq u(y_0) \geq u(x) + Du(x)\cdot (y_0-x)=u(x) + r \frac{|Du(x)|^2}{|Du(x)|+\e}.$$
Thus
$$|Du(x)|-\e\leq \frac{|Du(x)|^2}{|Du(x)|+\e}\leq \frac{\max_{y\in\p\Omega} u (y) -u(x)}{\dist (x,\p\Omega)}$$
and by letting $\e\rightarrow 0$, we obtain the desired estimate.
\end{proof}
We recall the Aleksandrov maximum principle; see for example \cite[Theorem 1.4.2]{G}.
\begin{thm}[Aleksandrov maximum principle]
Let $\Omega\subset \R^n$ be a bounded, open and convex set, and $u\in C(\overline{\Omega})$ a convex function with $u=0$ on $\p\Omega$. Then, for all $x_0\in\Omega$, we have
$$|u(x_0)|^{n}\leq C(n) [\diam (\Omega)]^{n-1} \dist (x_0, \p\Omega)\int_\Omega \det D^2 u(x) dx.$$
\label{Alekmp}
\end{thm}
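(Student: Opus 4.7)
The plan is to compare $u$ with the convex cone $\Gamma$ having vertex at $(x_0,u(x_0))$ and base $\p\Omega\times\{0\}$, and then to relate the Monge-Amp\`ere mass of $u$ over $\Omega$ to the Lebesgue measure of the subdifferential of $\Gamma$ at $x_0$. Without loss of generality one may assume $u(x_0)<0$; by convexity and the boundary condition $u\le 0$ on $\overline\Omega$. Define $\Gamma$ by $\Gamma(x_0+s(z-x_0))=(1-s)u(x_0)$ for $z\in\p\Omega$ and $s\in[0,1]$. I will combine two ingredients: the area-formula bound $|\p u(\Omega)|\le\int_\Omega\det D^2 u\,dx$ (valid for smooth convex $u$, the general case following by a standard approximation) together with a sharp lower bound on $|\p\Gamma(x_0)|$.

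The first step is the inclusion $\p\Gamma(x_0)\subset \p u(\Omega):=\bigcup_{y\in\Omega}\p u(y)$. Given $p\in\p\Gamma(x_0)$, evaluating the defining supporting inequality at $z\in\p\Omega$ yields $p\cdot(z-x_0)\le \Gamma(z)-u(x_0)=|u(x_0)|$. The auxiliary function $\phi(y):=u(y)-p\cdot(y-x_0)$ then satisfies $\phi(x_0)=u(x_0)$ while $\phi\ge u(x_0)$ on $\p\Omega$, so its minimum over $\overline\Omega$ is attained at some $y^\ast\in\Omega$, at which point $p\in\p u(y^\ast)$, proving the inclusion.

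The second step is the lower bound on $|\p\Gamma(x_0)|$. Convexity of $\Omega$ immediately yields $\p\Gamma(x_0)=|u(x_0)|\cdot(\Omega-x_0)^{\circ}$, the rescaled polar body. Writing $d=\dist(x_0,\p\Omega)$, $D=\diam(\Omega)$, and choosing a nearest boundary point $z_0$ with $e=(z_0-x_0)/d$, the inclusion $B_d(x_0)\subset\Omega$ combined with the common supporting hyperplane of $\Omega$ and $B_d(x_0)$ at $z_0$ gives $\Omega-x_0\subset\{y\cdot e\le d\}\cap B_D(0)$. Parameterizing $p=te+q$ with $t\ge 0$ and $q\perp e$, the condition $td+|q|D\le 1$ forces $p\in(\Omega-x_0)^\circ$; integrating $t\in[0,1/d]$ with $|q|\le(1-td)/D$ produces $|(\Omega-x_0)^\circ|\ge c(n)/(D^{n-1}d)$. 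Combining with the area-formula bound gives
$$\frac{c(n)|u(x_0)|^n}{D^{n-1}d}\le|\p\Gamma(x_0)|\le|\p u(\Omega)|\le\int_\Omega\det D^2 u\,dx,$$
which rearranges to the claimed inequality. The main subtlety is the polar-body computation: extracting the asymmetric decay $D^{n-1}\cdot d$ (rather than the trivial $D^n$) hinges on the one-sided confinement of $\Omega-x_0$ to the half-space $\{y\cdot e\le d\}$, which flattens $(\Omega-x_0)^\circ$ in the $+e$ direction and supplies the sharp factor $1/d$.
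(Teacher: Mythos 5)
Your proof is correct and is essentially the standard argument from Guti\'errez \cite[Theorem 1.4.2]{G}, which the paper cites without proof: compare $u$ with the cone $\Gamma$ over $\p\Omega$ with vertex $(x_0,u(x_0))$, establish the inclusion $\p\Gamma(x_0)\subset\p u(\Omega)$, and bound $|\p\Gamma(x_0)|$ from below. Your polar-body identity $|\p\Gamma(x_0)|=|u(x_0)|^{n}\,|(\Omega-x_0)^{\circ}|$ together with the half-space confinement at the nearest boundary point is a clean equivalent of the usual lower bound via the convex hull of the ball of radius $|u(x_0)|/\diam(\Omega)$ and the point at distance $|u(x_0)|/\dist(x_0,\p\Omega)$, and all steps (interior attainment of the minimum of $u(y)-p\cdot(y-x_0)$, the supporting hyperplane shared with $B_{d}(x_0)$, the area-formula bound $|\p u(\Omega)|\leq\int_\Omega\det D^2u$ for the smooth case relevant here) check out.
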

We will use the following linear algebra lemma.
\begin{lem} [Matrix inequalities]
 \label{LA_lem}
  Assume that $A, B\in \Sn$.
  \begin{myindentpar}{1cm}
  (a) If $A, B\geq 0$ then
  $\displaystyle\trace(AB) \geq n(\det A \det B)^{\frac{1}{n}}.$\\
  (b) Suppose that $B$ is positive definite. If $A\geq -aB$ for some $a\geq 0$ and $\trace (B^{-1}A)\leq D$ then $$(an + D)B\geq A\geq -aB.$$
  (c) Suppose that $A=(a_{ij})_{1\leq i, j\leq n}$ is positive definite. Then, for any $b=(b_1,\cdots, b_n)\in \R^n$,
 we have
 $$a_{ij} b_i b_j\geq \frac{|b|^2}{\trace(A^{-1})}.$$
  \end{myindentpar}
\end{lem}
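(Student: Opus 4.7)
The plan is to reduce each part to a statement about eigenvalues of an auxiliary symmetric matrix, using the symmetric square root of a positive (semi)definite matrix as the main tool.

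For part (a), I would first take the symmetric square root $A^{1/2}\geq 0$ (which exists since $A\geq 0$) and note that $\trace(AB)=\trace(A^{1/2}BA^{1/2})$ by cyclicity. The matrix $M:=A^{1/2}BA^{1/2}$ is symmetric positive semidefinite, with $\det M=\det A\det B$. Applying the AM-GM inequality to the nonnegative eigenvalues $\mu_1,\dots,\mu_n$ of $M$ gives $\trace(M)=\sum\mu_i\geq n\bigl(\prod\mu_i\bigr)^{1/n}=n(\det A\det B)^{1/n}$, which is the claim.

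For part (b), since $B$ is positive definite, I would conjugate by $B^{-1/2}$ and set $M:=B^{-1/2}AB^{-1/2}\in\Sn$. The hypothesis $A\geq -aB$ is equivalent to $M\geq -aI$, so every eigenvalue $\mu_i$ of $M$ satisfies $\mu_i\geq -a$. Using $\trace(B^{-1}A)=\trace(B^{-1/2}AB^{-1/2})=\trace(M)=\sum_i\mu_i\leq D$, for each fixed $i$ one has $\mu_i\leq D-\sum_{j\neq i}\mu_j\leq D+(n-1)a\leq D+na$. Therefore $M\leq(an+D)I$, and conjugating back by $B^{1/2}$ yields $A\leq(an+D)B$, completing the desired chain.

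For part (c), again taking the symmetric square roots $A^{1/2}$ and $A^{-1/2}$, I would write
\begin{equation*}
|b|^2=\langle b,b\rangle=\langle A^{1/2}b,A^{-1/2}b\rangle
\end{equation*}
and apply the Cauchy-Schwarz inequality to obtain $|b|^4\leq\langle Ab,b\rangle\langle A^{-1}b,b\rangle$. Since $A^{-1}$ is symmetric positive definite, its largest eigenvalue is bounded by $\trace(A^{-1})$, so $\langle A^{-1}b,b\rangle\leq\trace(A^{-1})\,|b|^2$. Combining these yields $a_{ij}b_ib_j=\langle Ab,b\rangle\geq|b|^4/\langle A^{-1}b,b\rangle\geq|b|^2/\trace(A^{-1})$.

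None of the three parts presents a genuine obstacle: each is a standard linear algebra fact, and the only craftsmanship is choosing the right conjugation or Cauchy-Schwarz pairing to reduce to a one-line eigenvalue argument. If anything, the most delicate bookkeeping is in (b), where one must be careful that the trace bound applied to $n-1$ of the eigenvalues produces the coefficient $na+D$ (or the sharper $(n-1)a+D$) rather than, say, a factor depending on $B$.
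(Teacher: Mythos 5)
Your proof is correct and follows essentially the same route as the paper: part (b) via conjugation by $B^{-1/2}$ and an eigenvalue count giving the (sharper) bound $a(n-1)+D\leq an+D$, and part (c) via a Cauchy--Schwarz argument that the paper carries out in the eigenbasis of $A$ rather than coordinate-free; the paper omits the standard proof of (a) altogether. No gaps.
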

\begin{proof}[Proof of Lemma \ref{LA_lem}] The proof of (a) is standard so we omit it.\\
 (b) Let $I_n\in \Sn$ be the identity matrix. Since $B$ is positive definite, we can rewrite the hypotheses as $$B^{-1/2} A B^{-1/2}\geq -aI_n,~\text{and } \trace(B^{-1/2} A B^{-1/2})\leq D.$$
Hence $$B^{-1/2} A B^{-1/2}\leq  [a(n-1) + D]I_n\leq (an+ D) I_n$$
and therefore, we obtain $A\leq (an+ D) B$ as asserted.\\
(c) Let $P$ be an orthogonal matrix such that $A= P D P^t$ where $D$ is the diagonal matrix: $$D=\diag (\lambda_1,\cdots,\lambda_n).$$ Then
 $$\trace(A^{-1})=\trace (D^{-1})~\text{and}~a_{ij}b_i b_j= (Ab)\cdot b= (P D P^t b)\cdot b= (D P^t b)\cdot (P^t b).$$
 Because $|P^t b|= |b|$, it suffices to prove the lemma for the case $A=D=\diag (\lambda_1,\cdots,\lambda_n).$ In this case, the lemma is equivalent to proving the obvious inequality:
 $$\left(\sum_{i=1}^n\lambda_i^{-1}\right)\left(\sum_{i=1}^n\lambda_i b_i^2\right)\geq \sum_{i=1}^n b_i^2.$$
\end{proof}
\subsection{Review on the Monge-Amp\`ere equation}
\label{MA_sec} 
Most of the results in this section are taken from Guti\'errez \cite{G}.
We assume throughout this Section \ref{MA_sec} that
$u$ is a strictly convex solution to the Monge-Amp\`ere equation
$$\lambda \leq \det D^2 u \leq \Lambda~\text{in an open, bounded and convex domain}~\Omega\subset\R^n.$$
We will use the following volume growth for compactly supported sections; see \cite[Corollary 3.2.4]{G}.
\begin{lem}[Volume estimate for section]\label{vol-sec1}
Suppose that $S_u(x, h)\subset\subset \Omega.$
Then
$$[C(n, \lambda, \Lambda)]^{-1}h^{n/2} \leq |S_u(x, h)| \leq C(n, \lambda, \Lambda) h^{n/2}.$$
\end{lem}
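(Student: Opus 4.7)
The plan is to normalize the section by John's lemma and then apply the Aleksandrov maximum principle together with the Monge-Amp\`ere pinching $\lambda \leq \det D^2 u \leq \Lambda$. First I would choose an affine map $T$ with linear part $M$ so that $B_1 \subset S^* := T(S_u(x,h)) \subset B_n$, and introduce the rescaled convex function
\[
w(y) = (\det M)^{2/n}\bigl[u(T^{-1}y) - u(x) - Du(x)\cdot(T^{-1}y - x) - h\bigr] \quad \text{on } S^*.
\]
By the chain rule $\det D^2 w(y) = \det D^2 u(T^{-1}y) \in [\lambda,\Lambda]$; by construction $w\equiv 0$ on $\partial S^*$ and $w(Tx) = -(\det M)^{2/n} h$. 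Since $|S_u(x,h)| = (\det M)^{-1}|S^*|$ with $|S^*|$ bounded above and below by dimensional constants, the two-sided claim reduces to showing that $(\det M)^{2/n} h$ is comparable to a positive universal constant.

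For the lower volume bound I would apply the Aleksandrov maximum principle (Theorem \ref{Alekmp}) to $w$ at the point $Tx$. Using $\diam S^* \leq 2n$, $\dist(Tx,\partial S^*)\leq 2n$, $|S^*| \leq |B_n|$, and only the \emph{upper} Monge-Amp\`ere bound $\det D^2 w \leq \Lambda$, this gives $((\det M)^{2/n} h)^n \leq C(n)(2n)^n\Lambda|B_n| \leq C(n,\Lambda)$, hence $|S_u(x,h)| \geq c(n,\Lambda)\, h^{n/2}$.

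For the upper volume bound I would use the \emph{lower} Monge-Amp\`ere bound $\det D^2 w \geq \lambda$ via comparison on $B_1 \subset S^*$ with the radial solution $\bar w(y) = \tfrac{1}{2}\lambda^{1/n}(|y|^2-1)$ of $\det D^2 \bar w = \lambda$ in $B_1$, $\bar w|_{\partial B_1} = 0$. Since $w \leq 0 = \bar w$ on $\partial B_1$ and $\det D^2 w \geq \det D^2 \bar w$, the Monge-Amp\`ere comparison principle forces $|\min_{S^*} w| \geq |\bar w(0)| = \tfrac{1}{2}\lambda^{1/n}$. To convert this universal lower bound on $|\min w|$ into a lower bound of the same quality on $|w(Tx)|$, I would argue by convexity along the segment from the minimizing point of $w$ through $Tx$ to $\partial S^*$. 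The hard part is that this step demands a \emph{balance property} $\dist(Tx,\partial S^*) \geq c(n,\lambda,\Lambda) > 0$: if $Tx$ could slip arbitrarily close to $\partial S^*$, the convexity comparison would degenerate. The balance is a structural fact about normalized sections that crucially uses \emph{both} sides of the pinching (an Aleksandrov-type one-sided bound is insufficient) and is typically established by an iterative contradiction argument (see, e.g., Theorem 3.3.8 of Guti\'errez \cite{G}). Granted the balance, one concludes $(\det M)^{2/n} h \geq c(n,\lambda,\Lambda)$, and thus $|S_u(x,h)| \leq C(n,\lambda,\Lambda)\, h^{n/2}$.
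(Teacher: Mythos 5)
The paper does not prove this lemma itself --- it cites \cite[Corollary 3.2.4]{G} --- and your overall strategy (normalize by John's lemma, rescale so that $\det D^2 w\in[\lambda,\Lambda]$, use Aleksandrov for the lower volume bound and a radial barrier plus the comparison principle for the upper one) is exactly the standard proof from that source. The reduction to showing $(\det M)^{2/n}h\sim 1$, the Aleksandrov step, and the barrier step are all correct.

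However, the ``hard part'' you identify at the end is not actually there, and the way you propose to handle it is problematic. By Definition \ref{sec_def}, the affine function you subtract is the tangent plane of $u$ at $x$ raised by $h$, so convexity gives $u(z)-u(x)-Du(x)\cdot(z-x)-h\geq -h$ for \emph{all} $z$, with equality at $z=x$. Hence $Tx$ is precisely the minimum point of $w$ and $|\min_{S^*}w|=(\det M)^{2/n}h$ on the nose; the comparison with $\bar w$ then immediately yields $(\det M)^{2/n}h\geq \tfrac12\lambda^{1/n}$ with no need to propagate the bound from an unknown minimizer to $Tx$. The ``balance property'' $\dist(Tx,\partial S^*)\geq c(n,\lambda,\Lambda)$ is a true but nontrivial fact about sections, and invoking results of the type of \cite[Theorem 3.3.8]{G} here is circular in the logical order of \cite{G} (and of this paper): those Section 3.3 statements --- engulfing, size of sections, inclusion/exclusion --- are themselves derived from the volume estimate you are trying to prove. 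Deleting that detour leaves a complete and correct proof.
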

We will use the engulfing property of sections; see \cite[Theorem 3.3.7]{G}.
\begin{thm} [Engulfing property of sections] There is a universal constant $\theta_0(n,\lambda,\Lambda)>2$ with the following property: If $S_u(y, 2h)\subset\subset\Omega$
and 
$x\in S_u(y, h)$, then we have $S_{u}(y, h)\subset S_u(x, \theta_0 h).$
\label{engulfthm}
\end{thm}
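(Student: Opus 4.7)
The plan is to reduce to a normalized setting via John's Lemma (Lemma \ref{John_lem}) and then close with a direct convexity estimate using bounds from the Aleksandrov maximum principle. First, I would apply John's Lemma to the bounded convex set $S_u(y,2h)$ to get an invertible affine map $T(w) = Aw + b$ with $B_1(0) \subset \Omega^\ast := T(S_u(y,2h)) \subset B_n(0)$. Set $\alpha := |\det A|^{2/n}$ and introduce the normalized function
\[
\tilde u(z) := \alpha\bigl[u(T^{-1}z) - u(y) - Du(y)\cdot (T^{-1}z-y) - 2h\bigr].
\]
A short computation shows that $\tilde u = 0$ on $\partial\Omega^\ast$, that $\lambda \le \det D^2 \tilde u \le \Lambda$ on $\Omega^\ast$, and that $\tilde y := T(y)$ is the unique minimizer of $\tilde u$ with $D\tilde u(\tilde y) = 0$ and $\tilde u(\tilde y) = -2\tilde h$, where $\tilde h := \alpha h$. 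Moreover sections transform by $S_{\tilde u}(T(w_0), k) = T\bigl(S_u(w_0, k/\alpha)\bigr)$ for all $w_0,k$, so the desired engulfing $S_u(y,h) \subset S_u(x, \theta_0 h)$ is equivalent to $S_{\tilde u}(\tilde y, \tilde h) \subset S_{\tilde u}(\tilde x, \theta_0 \tilde h)$ for $\tilde x \in S_{\tilde u}(\tilde y, \tilde h)$.

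Second, I would verify that $\tilde h$ is pinched between universal positive constants. The volume estimate Lemma \ref{vol-sec1} gives $|S_u(y,2h)| \asymp h^{n/2}$, while $|\Omega^\ast| = |\det A|\cdot |S_u(y,2h)| \in [|B_1|,\,|B_n|]$, which forces $|\det A|\,h^{n/2} \asymp 1$ and hence $\tilde h = \alpha h \asymp 1$. Consequently the Aleksandrov maximum principle (Theorem \ref{Alekmp}) applied to $\tilde u$ on $\Omega^\ast$ yields the universal bound $|\tilde u| \le C$ throughout $\Omega^\ast$ together with the quantitative boundary behavior
\[
|\tilde u(z)|^{n} \le C(n)\bigl(\diam\Omega^\ast\bigr)^{n-1}\dist(z,\partial\Omega^\ast)\int_{\Omega^\ast}\det D^2\tilde u\,dx \le C\,\dist(z,\partial\Omega^\ast).
\]
This forces any point in $S_{\tilde u}(\tilde y,\tilde h) = \{\tilde u < -\tilde h\}$ (using $D\tilde u(\tilde y)=0$ and $\tilde u(\tilde y)=-2\tilde h$) to lie at universal positive distance from $\partial\Omega^\ast$.

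Finally, the engulfing in the normalized picture follows from the convexity estimate
\[
0 \le \tilde u(\tilde z) - \tilde u(\tilde x) - D\tilde u(\tilde x)\cdot(\tilde z - \tilde x) \le |\tilde u(\tilde z)| + |\tilde u(\tilde x)| + |D\tilde u(\tilde x)|\cdot\diam(\Omega^\ast).
\]
Since $\diam(\Omega^\ast) \le 2n$ and $|\tilde u| \le C$ on $\Omega^\ast$, it remains to bound $|D\tilde u(\tilde x)|$; for this I would apply the gradient estimate Lemma \ref{slope-est} to $\tilde u$ on a slightly enlarged sublevel set containing $\{\tilde u \le -\tilde h\}$ but still compactly contained in $\Omega^\ast$ (supplied by the Aleksandrov boundary decay above). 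This gives a universal upper bound $\tilde C$ on the right-hand side, so setting $\theta_0 := \max\{3,\,\tilde C/\tilde h\}$ (universal by the previous step) completes the argument after rescaling back to $u$ via $S_{\tilde u}(T(x), k) = T(S_u(x, k/\alpha))$. The main obstacle is precisely Step 2: matching the affine invariance of sections with the volume growth of Lemma \ref{vol-sec1} so that $\tilde h$ is universal; without this matching no uniform $\theta_0$ could be extracted, and the bound on $|D\tilde u(\tilde x)|$ would fail to be universal since $\tilde x$ could approach $\partial\Omega^\ast$.
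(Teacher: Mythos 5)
Your proof is correct, and since the paper only cites \cite[Theorem 3.3.7]{G} for this statement, the relevant comparison is with the standard textbook argument — which yours essentially reproduces: normalize $S_u(y,2h)$ by John's lemma, use the volume estimate to pin $\tilde h\asymp 1$, use Aleksandrov's estimate to keep the half-height sublevel set $\{\tilde u<-\tilde h\}$ at universal distance from $\partial\Omega^\ast$, bound $|D\tilde u(\tilde x)|$ there by the gradient estimate, and close by convexity. The only cosmetic point is that $\theta_0$ should be defined via the universal lower bound $c\leq\tilde h$ (e.g.\ $\theta_0=\max\{3,\tilde C/c\}$) rather than via $\tilde h$ itself, which is exactly what your remark that $\tilde h$ is pinched between universal constants already supplies.
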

We will use the following estimate on the size of sections; see \cite[Theorem 3.3.8]{G}.
\begin{thm} [Size of section]\label{sec-size}
Suppose that $S_u(0, t_0)\subset\subset\Omega$ is a normalized section, that is,
$B_{1}(0)\subset S_u(0, t_0)\subset B_n(0).$
Then there are universal constants $\mu(n,\lambda,\Lambda)\in (0,1)$ and $C(n,\lambda,\Lambda)$ such that for all sections $S_u(x, h)\subset\subset\Omega$ with $x\in S_u(0, 3/4 t_0)$, we have
$$S_u(x, h)\subset B_{Ch^{\mu}}(x).$$
\end{thm}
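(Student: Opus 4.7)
The plan is to deduce the diameter bound for sections from the interior $C^{1,\alpha_\ast}$ regularity of $u$ (Theorem \ref{C1alpha}), applied both to $u$ itself and to its Legendre conjugate $u^\ast$. The key observation is that since $\det D^2 u \in [\lambda, \Lambda]$, the conjugate $u^\ast$ solves a Monge-Amp\`ere equation with $\det D^2 u^\ast \in [1/\Lambda, 1/\lambda]$, so $u^\ast$ also enjoys a $C^{1,\alpha_\ast}$ estimate on the relevant dual region.

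First, I would use the engulfing property (Theorem \ref{engulfthm}) to note that $S_u(0, 3t_0/4)$ is compactly contained in $S_u(0, t_0)$; hence on a slightly larger intermediate neighborhood a uniform $C^{1,\alpha_\ast}$ bound for $u$ is available, and by duality so is one for $u^\ast$ on the image of that neighborhood under $Du$. In particular, for $p = Du(x)$ and $q$ near $p$ in the dual set,
\begin{equation*}
u^\ast(q) - u^\ast(p) - Du^\ast(p)\cdot(q-p) \leq C|q-p|^{1+\alpha_\ast}.
\end{equation*}

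Next, I would convert this into a quantitative strict convexity for $u$ via convex duality. Using $u(y) \geq q\cdot y - u^\ast(q)$ for arbitrary $q\in\R^n$ and $Du^\ast(p) = x$, a direct rearrangement gives
\begin{equation*}
u(y) - u(x) - Du(x)\cdot(y-x) \geq (q-p)\cdot(y-x) - \bigl[u^\ast(q) - u^\ast(p) - Du^\ast(p)\cdot(q-p)\bigr].
\end{equation*}
Taking the supremum over $q$ and using the $C^{1,\alpha_\ast}$ upper bound from the first step, a Young-type optimization in the direction of $y-x$ yields the quantitative separation
\begin{equation*}
u(y) - u(x) - Du(x)\cdot(y-x) \geq c\,|y-x|^{\frac{1+\alpha_\ast}{\alpha_\ast}}
\end{equation*}
for $x\in S_u(0, 3t_0/4)$ and $y$ in a controlled neighborhood. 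Thus $y \in S_u(x, h)$ forces $|y-x| \leq Ch^\mu$ with $\mu = \alpha_\ast/(1+\alpha_\ast)\in (0,1/2)$, which is the desired inclusion.

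The main obstacle is verifying that the $C^{1,\alpha_\ast}$ estimate for $u^\ast$ applies uniformly on $Du(S_u(0, 3t_0/4))$, and that this image, together with a universal buffer around it, sits well inside the domain of $u^\ast$. This is handled by exploiting the fact that $Du$ is a bicontinuous bijection from sections of $u$ onto sections of $u^\ast$ in a quantitatively controlled way when the determinant of $D^2 u$ is pinched, so normalization and engulfing transport cleanly to the dual side. As a fallback, if one wishes to avoid explicit duality, one can instead apply John's lemma (Lemma \ref{John_lem}) and the volume bound (Lemma \ref{vol-sec1}) directly to the affine normalization of $S_u(x,h)$ and combine the singular value estimates with an iteration along the engulfing chain until a positive exponent $\mu$ emerges.
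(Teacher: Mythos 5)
The paper does not prove this statement at all: it is quoted verbatim from Guti\'errez \cite[Theorem 3.3.8]{G}, so there is no internal proof to compare yours against. Judged on its own terms, your Fenchel--Young computation is correct: with $p=Du(x)$ and $Du^\ast(p)=x$ one indeed has
\begin{equation*}
u(y)-u(x)-Du(x)\cdot(y-x)\;\geq\;(q-p)\cdot(y-x)-\bigl[u^\ast(q)-u^\ast(p)-Du^\ast(p)\cdot(q-p)\bigr],
\end{equation*}
and optimizing $q-p=t(y-x)/|y-x|$ against a bound $C|q-p|^{1+\alpha_\ast}$ for the bracket gives the separation $c|y-x|^{(1+\alpha_\ast)/\alpha_\ast}$, hence $\mu=\alpha_\ast/(1+\alpha_\ast)$. (To pass from ``$y$ in a controlled neighborhood'' to all of $S_u(x,h)$ you also need the standard remark that sections are convex, so the separation can be applied to the point of the segment $[x,y]$ at the threshold distance.)

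Two substantive issues remain. First, a circularity concern: in the standard development, and in \cite{G} in particular, the interior $C^{1,\alpha_\ast}$ estimate of Theorem \ref{C1alpha} is itself established \emph{using} the localization and size estimates for sections of which Theorem \ref{sec-size} is the quantitative form; deriving the latter from the former inverts the logical order and is legitimate only if Theorem \ref{C1alpha} is taken as a black box with an independent proof. Second, the step you yourself call ``the main obstacle'' is where essentially all the work lies and is left unproved: you must (i) verify that $u^\ast$ is an Aleksandrov solution with $\Lambda^{-1}\leq\det D^2u^\ast\leq\lambda^{-1}$ on the relevant dual region (note $Du(\Omega)$ need not be convex, so one must work with sections of $u^\ast$ rather than the full image), and (ii) exhibit a section of $u^\ast$ centered at $p_0=Du(0)$ which, after John normalization, contains $Du(S_u(0,3t_0/4))$ with a universal buffer in its half-height core and whose normalizing affine map has universally bounded norm both ways; without this the constant in the $C^{1,\alpha_\ast}$ bound for $u^\ast$ is not universal. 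The identity $u^\ast(q)-u^\ast(p)-Du^\ast(p)\cdot(q-p)=u(x)-u(y)-Du(y)\cdot(x-y)$ for $q=Du(y)$ shows that the gradient map carries sections to sections up to the engulfing constant of Theorem \ref{engulfthm}, so this can be carried out, but as written it is an assertion rather than a proof. Your fallback --- John's lemma, the volume bounds of Lemma \ref{vol-sec1}, and a dyadic iteration of the localization of normalized sections --- is in fact the route taken in \cite{G} and avoids both difficulties.
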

We will use the following inclusion and exclusion property of sections; see \cite[Theorem 3.3.10]{G}.
\begin{thm} [Inclusion and exclusion property of sections]
 \label{pst}
There exist universal constants $c_0(n,\lambda,\lambda)>0$ and $p_1(n,\lambda,\Lambda)\geq 1$ such that 
 \begin{myindentpar}{1cm}
 (i) if $0< r<s\leq 1$ and $x_1\in S_u(x_0, r t)$ where $S_u(x_0, 2t)\subset\subset \Omega$, then
 $$S_{u}(x_1, c_0 (s-r)^{p_1} t)\subset S_{u}(x_0, s t);$$
 (ii)  if $0< r<s< 1$ and $x_1\in S_u(x_0, t)\backslash S_{u}(x_0, st)$ where $S_u(x_0, 2t)\subset\subset
 \Omega$, then
 $$S_{u}(x_1, c_0 (s-r)^{p_1} t)\cap S_{u}(x_0, rt)=\emptyset.$$
 \end{myindentpar}
In fact, we can choose $p_1= (n+1)\mu^{-1}$ where $\mu$ is the constant in Theorem \ref{sec-size}.
\end{thm}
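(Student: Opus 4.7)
The plan is to run the classical Monge--Amp\`ere regularity recipe: first normalize via John's lemma, then quantify via the Aleksandrov maximum principle how far points of a lower‐height section sit from the boundary of a higher‐height section, and finally combine this with the size estimate of sections (Theorem~\ref{sec-size}) and the convex gradient bound (Lemma~\ref{slope-est}). The exponent $p_1=(n+1)\mu^{-1}$ will emerge from balancing the gradient growth of $u$ between concentric sections against the $h^\mu$ diameter of a section of height $h$. Applying Lemma~\ref{John_lem} to $S_u(x_0,2t)$, and subtracting the supporting affine function of $u$ at $x_0$, I may assume $x_0=0$, $u(0)=0$, $Du(0)=0$, $t=1$, $S_u(0,2)$ is normalized, and $\lambda\le\det D^2 u\le\Lambda$ holds on $S_u(0,2)$ with (possibly new) universal constants; the claimed inclusions are invariant under this reduction up to a change of $c_0$.

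The heart of the argument is a quantitative gradient estimate. For $s\in(0,1]$ and $x\in S_u(0,s)$, I apply the Aleksandrov maximum principle (Theorem~\ref{Alekmp}) to the convex function $u-s$ on $S_u(0,s)$, which vanishes on the boundary, and combine it with $|S_u(0,s)|\le C s^{n/2}$ from Lemma~\ref{vol-sec1} and $\diam S_u(0,s)\le C s^\mu$ from Theorem~\ref{sec-size}, to obtain
\begin{equation*}
(s-u(x))^n \le C\, s^{(n-1)\mu+n/2}\, \dist(x,\partial S_u(0,s)).
\end{equation*}
Lemma~\ref{slope-est} applied to $u$ on $S_u(0,s)$, where $\max_{\partial S_u(0,s)} u = s$, then yields
\begin{equation*}
|Du(x)| \le \frac{s}{\dist(x,\partial S_u(0,s))} \le \frac{C}{(s-u(x))^n}.
\end{equation*}
For $x=x_1\in S_u(0,r)$ this gives $|Du(x_1)|\le C(s-r)^{-n}$, while the same argument run on $S_u(0,2)$ yields the universal bound $|Du(x_1)|\le C$ whenever only $x_1\in S_u(0,1)$ is known.

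The two inclusions now follow by elementary algebra. For (i), given $y\in S_u(x_1,h)$ with $h=c_0(s-r)^{p_1}$, the defining inequality of the section together with $u(x_1)<r$ gives $u(y) < r + h + |Du(x_1)|\,|y-x_1|$. Combining the sharper gradient bound with $|y-x_1|\le C h^\mu$ from Theorem~\ref{sec-size}, applicable since $x_1\in S_u(0,3/2)$, I bound the right-hand side by $r + h + C h^\mu (s-r)^{-n}$; taking $p_1=(n+1)/\mu$ and $c_0$ small forces this to be $<s$, proving (i). For (ii), if some $z\in S_u(x_1,h)\cap S_u(0,r)$ existed, then $u(x_1)\ge s$ combined with the defining inequality of $S_u(x_1,h)$ and the universal bound $|Du(x_1)|\le C$ would force $s-r < h + C h^\mu$, which for $p_1\ge 1/\mu$ and $c_0$ small is impossible. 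The main obstacle I foresee is precisely the quantitative gradient estimate: a naive application of Lemma~\ref{slope-est} on the fixed section $S_u(0,2)$ loses the crucial $(s-r)$-dependence demanded by (i), and only by invoking Aleksandrov on the variable section $S_u(0,s)$ together with the sharp Monge--Amp\`ere volume and diameter bounds does one recover the scaling that produces the claimed exponent $p_1=(n+1)\mu^{-1}$.
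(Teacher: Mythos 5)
The paper does not prove this statement; it quotes it directly from Guti\'errez's book (\cite[Theorem 3.3.10]{G}), so there is no in-paper proof to compare against. Your reconstruction is correct and follows the standard textbook route: normalize $S_u(x_0,2t)$, use the Aleksandrov maximum principle on $S_u(0,s)$ together with the volume and diameter bounds (Lemma \ref{vol-sec1}, Theorem \ref{sec-size}) to get the quantitative distance estimate $\dist(x,\p S_u(0,s))\geq c\,(s-u(x))^n$, convert it via Lemma \ref{slope-est} into $|Du(x_1)|\leq C(s-r)^{-n}$, and then balance against $\diam S_u(x_1,h)\leq Ch^{\mu}$; the exponent $(n+1)\mu^{-1}$ drops out exactly as you computed, and the exclusion part (ii) correctly uses only the universal gradient bound on $S_u(0,1)$ together with convexity ($u(z)\geq u(x_1)+Du(x_1)\cdot(z-x_1)$). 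The one point you gloss over is that Theorem \ref{sec-size} and Lemma \ref{vol-sec1} are stated for sections already known to be compactly contained in $\Omega$, whereas $S_u(x_1,h)\subset\subset\Omega$ is part of what is being established in (i); this is repaired by the routine continuity argument (take the supremum of heights $h'\leq h$ for which $S_u(x_1,h')\subset\subset S_u(0,2)$, run your estimate for $h'<h^\ast$ to land strictly inside $S_u(0,s)$ with $s<2$, and conclude $h^\ast=h$). With that standard patch the argument is complete.
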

We recall the $C^{1,\alpha_\ast}$ regularity property of strictly convex solution to the Monge-Amp\`ere equation; see \cite[Theorem 5.4.5]{G}.
\begin{thm}
[$C^{1,\alpha_\ast}$ regularity]
\label{C1alpha} There exists a universal constant
$\alpha_\ast(n,\lambda,\Lambda)\in (0, 1]$ such that $u$ is $C^{1,\alpha_\ast}(\Omega)$. More quantitatively, if $S_u(x_0, t)\subset\subset\Omega$
is a normalized section and $x, y\in S_u(x_0, t/2)$ then $$|Du(x)-Du(y)|\leq C(n,\lambda,\Lambda)|x-y|^{\alpha_\ast}.$$
\end{thm}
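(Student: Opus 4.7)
The plan is to establish oscillation decay of $Du$ across nested sections and convert this into a H\"older estimate via iteration, exploiting the affine invariance of $\det D^2 u$ together with the geometric machinery for sections. By Lemma \ref{John_lem} I may first normalize so that the section under consideration satisfies $B_1(0) \subset S_u(x_0, t) \subset B_n(0)$; the pinching of $\det D^2 u$ is preserved up to universal constants under this normalization, and the H\"older bound we seek is invariant under it. So it is enough to prove, on a normalized section of universal height, that $|Du(x) - Du(y)| \leq C |x-y|^{\alpha_\ast}$ for $x, y$ in its half.

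The heart of the proof is a single-scale oscillation-decay lemma: there exist universal constants $\theta \in (0,1)$ and $\eta \in (0,1)$ such that, writing
$$\omega(s) := \sup_{x_1, x_2 \in S_u(z, s)} |Du(x_1) - Du(x_2)|,$$
every section $S_u(z, s)$ with $S_u(z, 2s) \subset\subset \Omega$ satisfies $\omega(\theta s) \leq \eta\, \omega(s)$. To prove it I would subtract the tangent function $\ell(x) = u(z) + Du(z) \cdot (x - z)$, normalize $S_u(z, s)$ by an affine map $T$ via Lemma \ref{John_lem}, and observe that on $T(S_u(z, s))$ the rescaled convex function is nonnegative, vanishes (up to a constant) on the boundary, and has pinched Monge-Amp\`ere determinant. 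Aleksandrov's maximum principle (Theorem \ref{Alekmp}) together with the volume bound (Lemma \ref{vol-sec1}) then allows one to strictly improve the affine approximation on a concentric inner sub-section; via Lemma \ref{slope-est} this improvement translates into a universal contraction of the gradient oscillation upon pulling back through $T$.

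To conclude, for $x, y \in S_u(x_0, t/2)$ I iterate the decay on a chain of nested sections. Using the engulfing property (Theorem \ref{engulfthm}) and the inclusion property (Theorem \ref{pst}), one builds sections $S_u(x, s_k)$ with $s_k$ decreasing geometrically, each containing $y$, until the section becomes too small to contain $y$ at some index $k^{\ast}$. The size estimate (Theorem \ref{sec-size}) forces $|x - y| \leq C s_{k^{\ast}}^{\mu}$, so $k^{\ast}$ grows like $\mu^{-1}\log(1/|x-y|)$ up to universal factors. Applying the oscillation decay $k^{\ast}$ times then gives $|Du(x) - Du(y)| \leq C\eta^{k^{\ast}} \leq C|x-y|^{\alpha_\ast}$ for a universal $\alpha_\ast = \alpha_\ast(n, \lambda, \Lambda) \in (0,1]$ determined by $\mu$, $\eta$, $\theta$, $\theta_0$. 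The main obstacle is the single-scale oscillation-decay lemma itself: one must show in a quantitatively uniform way that after normalization $u - \ell$ admits a strictly better affine approximation on an inner concentric section. This is where the strict convexity of $u$ in Aleksandrov's sense enters in quantitative form and is the technical heart of the argument; the remaining steps are geometric iteration using the section machinery already collected in Section \ref{Prelim_sec}.
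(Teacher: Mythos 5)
The paper does not actually prove this theorem; it quotes it from Guti\'errez \cite[Theorem 5.4.5]{G}, where the argument runs through the engulfing property and the quasi-distance $\delta_u(x,y)=u(y)-u(x)-Du(x)\cdot(y-x)$ rather than through gradient oscillation decay. Your proposal has a genuine gap exactly at what you call the technical heart. First, the single-scale lemma $\omega(\theta s)\le\eta\,\omega(s)$ is asserted, not proved: Aleksandrov's maximum principle together with the volume bound gives one-sided pointwise control of $u$ by the mass of $\det D^2u$, but it supplies no mechanism for a \emph{strict improvement} of the affine approximation on an inner section; under the sole hypothesis $\lambda\le\det D^2u\le\Lambda$ there is no comparison or compactness argument hiding behind those two tools. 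Second, and more seriously, even granting an improvement in the normalized picture, it does not pull back: under $Tx=A_hx+b_h$ the gradient transforms as $D\tilde u=(\det A_h)^{-2/n}A_h^{t}Du$, so a contraction of the oscillation of $D\tilde u$ controls the oscillation of $Du$ only after multiplying by $\|(A_h^{-1})^{t}\|(\det A_h)^{2/n}$, which is not universally bounded unless one already controls the eccentricity of sections at every scale --- and that control is essentially equivalent to the $C^{1,\alpha_\ast}$ estimate you are trying to prove. This circularity is precisely why the standard proof works with the affine-invariant scalar $\delta_u$ and the engulfing constant $\theta_0$, first establishing $\delta_u(x,y)\le C|x-y|^{1+\alpha_\ast}$ and then converting this into the gradient estimate by a pure convexity lemma.

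There is also a smaller error in the iteration. Theorem \ref{sec-size} gives the inclusion $S_u(x,h)\subset B_{Ch^{\mu}}(x)$, so from $y\in S_u(x,s_{k^\ast})$ you obtain $|x-y|\le Cs_{k^\ast}^{\mu}$, which yields an \emph{upper} bound on $k^\ast$ in terms of $\log(1/|x-y|)$; the H\"older estimate requires a \emph{lower} bound on $k^\ast$, i.e., the opposite inclusion $S_u(x,h)\supset B_{ch}(x)$. That inclusion does follow from the interior Lipschitz bound of Lemma \ref{slope-est}, so this step is repairable, but as written the containment is used in the wrong direction.
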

\begin{lem}
 \label{KKlem}
 There exist universal constants $K(n,\lambda,\Lambda),\hat K(n,\lambda,\Lambda)>1$ with the following properties:
\begin{myindentpar}{1cm}
 (i) Let $\theta_0$ be as in Theorem \ref{engulfthm}. If  $$S_u(x_1, 4\theta_0 h_1), S_u(x_2, 4\theta_0 h_2)\subset\subset\Omega, ~S_u(x_1, h_1)\cap S_u(x_2, h_2)\neq\emptyset~ \text{and}~ 2h_1\geq h_2$$ then $$S_u(x_2, h_2)\subset
S_u(x_1, Kh_1).$$
(ii) If $S_u(x, t)\subset S_u(y, h)$ where $S_u(y, \hat K h)\subset\subset \Omega$ then $S_u(x, Kt)\subset S_u(y, \hat K h).$ 
\end{myindentpar}
\end{lem}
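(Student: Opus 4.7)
Both parts reduce to a bounded number of applications of the engulfing property (Theorem \ref{engulfthm}); the factor $4\theta_0$ in the hypothesis of (i) is chosen precisely so that there is enough room in $\Omega$ to iterate engulfing in both directions.

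\emph{Part (i).} Pick a common point $z \in S_u(x_1, h_1) \cap S_u(x_2, h_2)$ and ``pass through $z$'' in three engulfing steps. First, Theorem \ref{engulfthm} at $x_2$ with contact point $z$ (using $S_u(x_2, 2h_2)\subset\subset\Omega$, which is weaker than the hypothesis $S_u(x_2, 4\theta_0 h_2)\subset\subset\Omega$ since $\theta_0>2$) gives
\[ S_u(x_2, h_2) \subset S_u(z, \theta_0 h_2) \subset S_u(z, 2\theta_0 h_1), \]
using the ratio hypothesis $h_2 \leq 2 h_1$. Second, the same theorem at $x_1$ with contact $z$ gives $x_1 \in S_u(z, \theta_0 h_1)$. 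Third, engulfing centered at $z$ with contact $x_1$ yields
\[ S_u(z, 2\theta_0 h_1) \subset S_u(x_1, 2\theta_0^2 h_1), \]
and chaining the three inclusions gives the result with $K = 2\theta_0^2$. The third step's required domain-containment $S_u(z, 4\theta_0 h_1) \subset\subset \Omega$ I would deduce from the hypothesis $S_u(x_1, 4\theta_0 h_1) \subset\subset \Omega$ together with the fact that $z$ lies deep inside the much smaller section $S_u(x_1, h_1)$; this is the only place where the full factor $4\theta_0$ (rather than, say, $2\theta_0$) is needed.

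\emph{Part (ii).} The key reduction is that $t$ is universally controlled by $h$. Indeed, $S_u(x, t) \subset S_u(y, h) \subset\subset \Omega$ together with the volume estimate of Lemma \ref{vol-sec1} gives
\[ C^{-1} t^{n/2} \leq |S_u(x, t)| \leq |S_u(y, h)| \leq C h^{n/2}, \]
so $t \leq C_1 h$ for a universal $C_1$. With $K$ from part (i) in hand, I would apply (i) to $x_1 := y$, $x_2 := x$ with heights $h_1 := \max(h, K t / 2)$ and $h_2 := K t$: the ratio $2 h_1 \geq h_2$ is built in, the point $x \in S_u(y, h) \subset S_u(y, h_1)$ is common to both sections, and the domain-containment conditions $S_u(y, 4\theta_0 h_1), S_u(x, 4\theta_0 K t) \subset\subset \Omega$ hold as soon as $\hat K$ is taken to be a sufficiently large universal multiple of $\theta_0 K C_1$ (pushing $S_u(x, 4\theta_0 Kt)$ into the engulfing neighborhood of $y$ via Theorem \ref{engulfthm} once more). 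The conclusion of (i) then reads $S_u(x, K t) \subset S_u(y, K h_1) \subset S_u(y, \hat K h)$.

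The main obstacle is the $\Omega$-containment bookkeeping in part (i) --- deducing $S_u(z, 4\theta_0 h_1) \subset\subset \Omega$ from $S_u(x_1, 4\theta_0 h_1) \subset\subset \Omega$ when $z$ lies in the much smaller section $S_u(x_1, h_1)$. Everything else is mechanical tracking of universal constants.
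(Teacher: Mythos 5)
Your part (i) is essentially the paper's proof verbatim: the same three engulfing steps through a common point of the two sections, the same constant $K=2\theta_0^2$, and the same unverified containment $S_u(z,4\theta_0 h_1)\subset\subset\Omega$ needed for the third step --- the paper uses this silently, so if anything you are more candid in flagging it. (The clean way to discharge such containments is Theorem \ref{pst}(i), which places sections centered at an interior point $z$ of $S_u(x_1,h_1)$ inside sections centered at $x_1$ without first requiring compact containment of any section centered at $z$; engulfing alone cannot break the circularity you describe.)

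Part (ii) is where you genuinely diverge. The paper's argument is shorter and does not reuse part (i): after the volume estimate gives $t\leq Ch$, it applies Theorem \ref{pst}(i) to $x\in S_u(y,h)=S_u\bigl(y,\hat K^{-1}\cdot\hat Kh\bigr)$ to produce a universal $c_1$ with $S_u(x,c_1\hat Kh)\subset S_u(y,\hat Kh)$, and then chooses $\hat K=KCc_1^{-1}$ so that $Kt\leq KCh=c_1\hat Kh$, which gives the conclusion by monotonicity of sections in the height. Your route --- feeding $x_1=y$, $x_2=x$, $h_2=Kt$ back into part (i) --- also reaches the conclusion, but it inherits the hypothesis $S_u(x,4\theta_0 Kt)\subset\subset\Omega$, and your plan to obtain this ``via Theorem \ref{engulfthm} once more'' hits exactly the chicken-and-egg problem you identified in part (i): engulfing a section centered at $x$ into one centered at $y$ presupposes that a larger section centered at $x$ is already compactly contained. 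Theorem \ref{pst} is the correct tool for that step, and once you invoke it you are essentially back to the paper's one-line proof of (ii). In short: part (i) is identical to the paper (at the same level of rigor), and part (ii) is a workable but more roundabout alternative whose missing containment is most naturally supplied by the very lemma the paper uses directly.
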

\begin{proof}[Proof of Lemma \ref{KKlem}]
(i) We can choose $K=2\theta_0^2$. Indeed, suppose that
$x\in S_u(x_1, h_1)\cap S_u(x_2,  h_2)$ and $2h_1\geq h_2$ where 
$S_u(x_1, 4\theta_0 h_1), S_u(x_2, 4\theta_0 h_2)\subset\subset\Omega.$ Then, by the engulfing property of sections in Theorem \ref{engulfthm}, $ S_u(x_2, h_2)\subset S_u(x, \theta_0 h_2)
\subset S_u(x, 2\theta_0 h_1)$ and 
$x_1\in S_u(x_1, h_1)
\subset S_u(x, 2\theta_0 h_1)$. Again, by the engulfing property, we have $S_u(x, 2\theta_0 h_1)\subset S_u(x_1, 2\theta^2_0 h_1)$. 
It follows that $S_u(x_2, h_2)\subset S_u(x_1, 2\theta_0^2 h_1)\equiv S_u(x_1, Kh_1)$.\\
(ii) The existence of $\hat K$ is due to Lemma \ref{vol-sec1}
 and Theorem \ref{pst}. Indeed, by Lemma \ref{vol-sec1}, the inclusion $S_u(x, t)\subset S_u(y, h)\subset\subset \Omega$ implies that $t\leq C(n,\lambda,\Lambda) h$. From
 $x\in S_u(y, \hat K h)\subset\subset \Omega$ and Theorem \ref{pst}, we can find $c_1(n,\lambda,\Lambda)$ small, universally such that $S_u(x, c_1 \hat K h)\subset S_u(y, \hat K h).$ Thus
 it suffices to choose $\hat K= K C c_1^{-1}$ to get $S_u(x, Kt)\subset S_u(y, \hat K h)$.
\end{proof}

\subsection{Vitali covering lemma}
In the context of the Monge-Amp\`ere equation, we have the following Vitali covering lemma; see also \cite[Lemma 1]{DPFS}.
\begin{lem}[Vitali covering]\label{Vita_cov} 
Suppose that $u$ is a strictly convex solution to the Monge-Amp\`ere equation
$\lambda\leq \det D^2 u\leq\Lambda$ in a bounded and convex set $\Omega\subset\R^n$. Let $\theta_0$ and $K$ be as in Theorem \ref{engulfthm} and Lemma \ref{KKlem}, respectively. 
\begin{myindentpar}{1cm}
 (i) Let $\mathcal{S}$ be a collection of sections $S^x=S_u(x, h(x))$ where $S_u(x, 4\theta_0 h(x))\subset\subset\Omega$. Then there exists a countable subcollection of disjoint sections 
 $\displaystyle\bigcup_{i=1}^\infty S_u(x_i, h(x_i))$ such that
 $$\displaystyle \bigcup_{S^x\in \mathcal{S}} S^x\subset \bigcup_{i=1}^\infty S_u(x_i, Kh(x_i)).$$
 (ii Let $D$ be a compact set in $\Omega$ and assume that for each $x\in D$ we associate a corresponding section $S_u(x, h(x))\subset\subset\Omega$. Then we can find a 
 finite number of these sections $S_u(x_i, h(x_i)), i=1,\cdots, m,$ such that
$$D \subset \bigcup_{i=1}^m S_u(x_i, h(x_i)),~\text{with}~ S_u(x_i, K^{-1} h(x_i))~\text{disjoint}.$$
\end{myindentpar}
\end{lem}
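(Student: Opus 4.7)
The plan is to adapt the classical Vitali greedy construction to sections, with Lemma \ref{KKlem}(i) playing the role of the usual ``comparable-radius balls that meet sit inside a fixed dilation'' property of Euclidean balls. The two parts will follow from a single generational selection argument; part (ii) then reduces to part (i) after shrinking heights by the factor $K$.

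\emph{Part (i).} First I would note that every section in $\mathcal{S}$ satisfies $S_u(x, 4\theta_0 h(x)) \subset\subset \Omega$, so by Lemma \ref{vol-sec1} the heights are uniformly bounded by some $H_0 = H_0(n,\lambda,\Lambda,\Omega) < \infty$. Then I would stratify $\mathcal{S}$ by generation,
\[
 \mathcal{S}_k = \{S^x \in \mathcal{S} : 2^{-k-1} H_0 < h(x) \leq 2^{-k} H_0\}, \quad k = 0, 1, 2, \ldots,
\]
and build a disjoint family $\mathcal{F} = \bigcup_k \mathcal{F}_k$ inductively: $\mathcal{F}_k \subset \mathcal{S}_k$ is chosen maximal among subfamilies of sections pairwise disjoint and also disjoint from every section already in $\mathcal{F}_0 \cup \cdots \cup \mathcal{F}_{k-1}$. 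Each $\mathcal{F}_k$ is at most countable since its members are pairwise disjoint and, again by Lemma \ref{vol-sec1}, each has measure $\geq c(n,\lambda,\Lambda)(2^{-k-1}H_0)^{n/2}$. Enumerate $\mathcal{F} = \{S_u(x_i, h(x_i))\}_{i\geq 1}$. Given any $S^x \in \mathcal{S}_k$, maximality forces $S^x$ to meet some $S_u(x_i, h(x_i)) \in \mathcal{F}_0 \cup \cdots \cup \mathcal{F}_k$, and for this $i$ one has $h(x_i) > 2^{-k-1} H_0 \geq h(x)/2$, so $2h(x_i) \geq h(x)$. Lemma \ref{KKlem}(i) then yields $S^x \subset S_u(x_i, K h(x_i))$, completing (i).

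\emph{Part (ii).} I would deduce this from (i). Recall $K = 2\theta_0^2$ and $\theta_0 > 2$, so $K > 4\theta_0$; hence for each $x \in D$ the shrunken section $\widetilde S^x := S_u(x, K^{-1} h(x))$ satisfies $S_u(x, 4\theta_0 K^{-1} h(x)) \subset S_u(x, h(x)) \subset\subset \Omega$, so the hypothesis of (i) is met. Applying (i) to the family $\{\widetilde S^x\}_{x \in D}$ produces a countable disjoint subfamily $\{S_u(x_i, K^{-1} h(x_i))\}_i$ with
\[
 D \subset \bigcup_{x \in D} \widetilde S^x \subset \bigcup_i S_u(x_i, K \cdot K^{-1} h(x_i)) = \bigcup_i S_u(x_i, h(x_i)).
\]
Since the sections $S_u(x_i, h(x_i))$ are open (being compactly contained in $\Omega$) and $D$ is compact, a finite subcover exists; the corresponding inner sections $S_u(x_i, K^{-1} h(x_i))$ remain pairwise disjoint as a subfamily of a disjoint family.

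The only real obstacle is the generational bookkeeping and the verification $K \geq 4\theta_0$ needed in the reduction of (ii) to (i); once one accepts Lemma \ref{KKlem}(i) as the non-Euclidean replacement for ``comparable balls that meet are controlled by a fixed dilation,'' the argument is structurally identical to the classical Vitali covering lemma in $\R^n$.
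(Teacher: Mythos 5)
Your proposal is correct and follows essentially the same argument as the paper: the same uniform height bound from Lemma \ref{vol-sec1}, the same dyadic stratification with inductively chosen maximal disjoint subfamilies, the same use of Lemma \ref{KKlem}(i) via $2h(x_i)\geq h(x)$, and the same reduction of (ii) to (i) by shrinking heights by the factor $K^{-1}$ and invoking compactness. Your explicit verification that $K=2\theta_0^2>4\theta_0$ so that the shrunken sections satisfy the hypothesis of (i) is a detail the paper leaves implicit, but it changes nothing structurally.
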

For reader's convenience, we include its standard proof using
the engulfing property of sections of solutions to the Monge-Amp\`ere equation.
\begin{proof}[Proof of Lemma \ref{Vita_cov}] 
(i) From the volume estimate for sections in Lemma \ref{vol-sec1} and $S_u(x, h(x))\subset\subset \Omega$, we find that
$$H:= \sup\{h(x)| S^x\in\mathcal{S}\}\leq C(n,\lambda, \Lambda,\Omega)<\infty.$$
Define $$\mathcal{S}_i:= \{S^x\in \mathcal{S}| \frac{H}{2^i}<h(x) \leq \frac{H}{2^{i-1}}\}~(i=1,2,\cdots).$$ We define $\mathcal{F}_i\subset \mathcal{S}_i$ as follows. 
Let $\mathcal{F}_1$ be any maximal disjoint collection of sections in $\mathcal{S}_1$. By the volume estimate in Lemma \ref{vol-sec1}, $\mathcal{F}_1$ is finite. 
Assuming $\mathcal{F}_1,\cdots, \mathcal{F}_{k-1} $ have been selected, we choose $\mathcal{F}_k$
to be any maximal disjoint subcollection of
$$\left\{S\in \mathcal{S}_k| S\cap S^{x}=\emptyset~\text{for all~} S^x\in \bigcup_{j=1}^{k-1}\mathcal{F}_j\right\}.$$ Each $\mathcal{F}_k$ is again a finite set. 

We claim that the 
countable subcollection of disjoint sections $S_u(x_i, h(x_i))$ where $\displaystyle S^{x_i}\in \mathcal{F}:=\bigcup_{k=1}^{\infty} \mathcal{F}_k$ satisfies the conclusion of the lemma. To see this, it 
suffices to show that for any section $S^x\in \mathcal{S}$, there exists a section $ S^y\in \mathcal{F}$ such that $S^x\cap S^y\neq \emptyset$ and $S^x\subset S_u(y, Kh(y))$. 
The proof of this fact is simple. There is an index $j$ such that $S^x\subset \mathcal{S}_j$. By the maximality of $\mathcal{F}_j$, there is a section $\displaystyle S^y\in \bigcup_{k=1}^j 
\mathcal{F}_k$ with $S^x\cap S^y \neq\emptyset$. Because $h(y)> \frac{H}{2^j} $ and $h(x) \leq \frac{H}{2^{j-1}}$, we have $h(x) \leq 2 h(y)$.
By Lemma \ref{KKlem}, we have $S^x\subset S_u(y, K h(y))$.
\vglue 0.2cm
\noindent
(ii) We apply (i) to the collection of sections $S_u(x, K^{-1}h(x))$ where $x\in D$. 
Then there exists a countable subcollection of disjoint sections 
 $\displaystyle\bigcup_{i=1}^\infty S_u(x_i, K^{-1}h(x_i))$ such that
 $$\displaystyle D \subset\bigcup_{x\in D}S_u(x, K^{-1}h(x))\subset \bigcup_{i=1}^\infty S_u(x_i, h(x_i)).$$
By the compactness of $D$, we can choose a finite number of sections $S_u(x_i, h(x_i))$ $(i=1,\cdots, m)$ which cover $D$.
\end{proof}
\subsection{Growing ink-spots lemma}
\label{ink_sec}
In the proof of the power decay estimate in Theorem \ref{decay_thm}, we use the following consequence of Vitali's covering
lemma. It is often referred to as the growing ink-spots lemma which was first introduced by Krylov-Safonov \cite{KS2}; see also 
Imbert-Silvestre \cite[Lemma 2.1]{IS}. The term \lq\lq growing ink-spots lemma\rq\rq was coined
by E. M. Landis.
\begin{lem}[Growing ink-spots lemma] \label{inkspots} Suppose that $u$ is a strictly convex solution to the Monge-Amp\`ere equation
$\lambda\leq \det D^2 u\leq\Lambda$ in a bounded and convex set $\Omega\subset\R^n$. Let $\hat K$ be as in Lemma \ref{KKlem}. Assume that for some $h>0$, 
$S_u(0, \hat K h)\subset\subset\Omega.$

Let $E \subset F \subset S_u(0, h)$ be two open sets. Assume that for some constant $\delta \in (0,1)$, the
following two assumptions are satisfied:
\begin{myindentpar}{1cm}
(i) If any section $S_u(x, t) \subset S_u(0, h)$ satisfies $|S_u(x, t) \cap E| > (1-\delta)
  |S_u(x, t)|$, then $S_u(x, t) \subset F$;\\
(ii) $|E| \leq (1-\delta) |S_u(0, h)|$. 
\end{myindentpar}
Then $|E| \leq (1-c_2\delta) |F|$ for some constant $c_2$ depending only on $n,\lambda$ and $\Lambda$.
\end{lem}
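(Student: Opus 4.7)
The plan is to prove this as the Monge-Amp\`ere section analogue of the classical Krylov-Safonov growing ink-spots lemma (compare Imbert-Silvestre \cite[Lemma 2.1]{IS}). For almost every $x \in E$ -- a Lebesgue density point with respect to the section differentiation basis, which is valid here via the engulfing property (Theorem \ref{engulfthm}) -- I would define the stopping height
\[
t(x) \;=\; \sup\Bigl\{\,t > 0 : S_u(x, s) \subset S_u(0, h)\text{ and }|S_u(x, s) \cap E| > (1-\delta)|S_u(x, s)|\ \forall\, 0 < s \leq t\,\Bigr\}.
\]
Lebesgue density provides $t(x) > 0$, while hypothesis (ii) combined with the volume estimate (Lemma \ref{vol-sec1}) forces $t(x) < \infty$. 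For every $s < t(x)$, hypothesis (i) applied to $S_u(x, s)$ yields $S_u(x, s) \subset F$, and passing to the limit $s \nearrow t(x)$ (using openness of $F$) one concludes $S_u(x, t(x)) \subset F$ up to a null set. At the stopping time the density has dropped to exactly $1-\delta$, giving the key defect $|S_u(x, t(x)) \setminus E| = \delta \, |S_u(x, t(x))|$.

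Lemma \ref{KKlem}(ii), together with the hypothesis $S_u(0, \hat K h) \subset\subset \Omega$, supplies the admissibility condition $S_u(x, 4\theta_0 t(x)) \subset\subset \Omega$ required by the Vitali covering (Lemma \ref{Vita_cov}). Applying Lemma \ref{Vita_cov}(i) to the family $\{S_u(x, K^{-1} t(x))\}$ as $x$ ranges over density points of $E$, I would extract a countable disjoint sub-collection $\{S_u(x_i, K^{-1} t_i)\}_{i \ge 1}$ (writing $t_i = t(x_i)$) whose $K$-enlargements $\{S_u(x_i, t_i)\}$ cover $E$ modulo a null set. Each enlarged section is contained in $F$ and carries the defect $|S_u(x_i, t_i) \setminus E| = \delta \, |S_u(x_i, t_i)|$.

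The main obstacle is the final step of converting this local density-defect information into the sharp inequality $|E| \leq (1-c_2\delta)|F|$. A naive summation over the covering only gives the weaker bound $|E| \leq C(1-\delta)|F|$ with $C \geq 1$, the loss coming from the volume drop between $S_u(x_i, K^{-1} t_i)$ and $S_u(x_i, t_i)$, which is insufficient for small $\delta$. The sharp constant is recovered by a Krylov-Safonov-style counting that exploits a bounded-overlap property of the enlarged collection $\{S_u(x_i, t_i)\}$: by Lemma \ref{KKlem}(i), any two such sections sharing a point have comparable heights up to a universal factor, and the disjointness of their $K^{-1}$-shrinkings combined with the volume estimate (Lemma \ref{vol-sec1}) bounds the number of enlarged sections through a single point by a universal constant $M = M(n, \lambda, \Lambda)$. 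With this overlap bound one computes
\[
|F \setminus E| \;\geq\; \Bigl|\bigcup_i [S_u(x_i, t_i) \setminus E]\Bigr| \;\geq\; \frac{1}{M} \sum_i |S_u(x_i, t_i) \setminus E| \;=\; \frac{\delta}{M} \sum_i |S_u(x_i, t_i)| \;\geq\; \frac{\delta}{M} |E|,
\]
which rearranges to $|E| \leq |F|/(1 + \delta/M) \leq (1 - c_2 \delta) |F|$ with $c_2 = 1/(2M)$ for $\delta \leq M$; the complementary case $\delta > M$ is trivial since $E \subset F$.
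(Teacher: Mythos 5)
Your decomposition runs in the opposite direction from the paper's: the paper covers $F$ by \emph{maximal sections contained in $F$}, gets the density bound $|S_j\cap E|\leq(1-\delta)|S_j|$ from maximality plus (i), and uses (ii) only for the exceptional case $S_u(x,\bar h(x))=S_u(0,h)$; you instead cover $E$ by density stopping-time sections. The first genuine gap is your claim that at the stopping height the defect is exactly $\delta\,|S_u(x,t(x))|$. Your supremum can be attained for two distinct reasons: either the density of $E$ drops to $1-\delta$, or the containment constraint $S_u(x,s)\subset S_u(0,h)$ fails first. In the second case the density at $t(x)$ may still be close to $1$ and the defect close to $0$ (take $E$ to be a small section internally tangent to $S_u(0,h)$: for $x$ near the tangency the largest admissible section is exhausted while still essentially filled by $E$). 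Hypothesis (ii) controls the density of $E$ only in the full section $S_u(0,h)$, not in the maximal-in-containment section centered at an interior $x$, so it cannot rule this case out. Consequently the identity $\sum_i|S_u(x_i,t_i)\setminus E|=\delta\sum_i|S_u(x_i,t_i)|$, on which your final chain of inequalities rests, fails. This is precisely the difficulty that the paper's choice of working with $F$ circumvents. (Your appeal to Lebesgue density points for the section basis is an extra, unproved ingredient the paper does not need, though it is in fact available from Lemma \ref{Vita_cov}.)

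The second gap is the bounded-overlap claim for the dilated family $\{S_u(x_i,t_i)\}$. Lemma \ref{KKlem}(i) does not say that two intersecting sections have comparable heights; it only gives an inclusion when one height is at most twice the other. In general a point can lie in infinitely many $K$-dilates of a disjoint family: disjoint sections with geometrically decreasing heights accumulating at a point $z$ all have dilates containing $z$. So the universal constant $M$ you invoke does not exist. This particular step is repairable by rearranging the Vitali application: apply Lemma \ref{Vita_cov}(i) to the unshrunk family $\{S_u(x,t(x))\}$, measure the defect on the resulting \emph{disjoint} sections $S_u(x_i,t_i)\subset F$, and use Lemma \ref{vol-sec1} to get $\sum_i|S_u(x_i,t_i)|\geq C^{-1}\sum_i|S_u(x_i,Kt_i)|\geq C^{-1}|E|$ --- which is exactly the paper's bookkeeping. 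Even with that fix, however, the first gap remains and would force you back toward the paper's decomposition of $F$.
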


\begin{proof} [Proof of Lemma \ref{inkspots}] The proof here follows the line of argument in Imbert-Silvestre \cite{IS} in the case $u(x)=\frac{1}{2}|x|^2$.
For every $x \in F$, since $F$ is open, there exists some maximal section which is contained in $F$ and contains $x$. 
We choose one of those sections for each $x \in F$ and call it $S_u(x,\bar h(x))$.

If $S_u(x,\bar h(x)) = S_u(0, h)$ for any $x \in F$, then the conclusion of the lemma follows immediately since 
$|E| \leq (1-\delta) |S_u(0, h)|$ by (ii) and $F= S_u(0, h)$ in this case, so let us assume that this is not the case here.

We claim that $|S_u(x,\bar h(x)) \cap E| \leq (1-\delta)|S_u(x,\bar h(x))|$. 
Otherwise, we could find a slightly larger section $\tilde S$ containing $S_u(x,\bar h(x))$ such that $|\tilde S \cap E| > (1-\delta) |\tilde S|$ and 
$\tilde S \not\subset F$, contradicting (i).

The family of sections $S_u(x,\bar h(x))$ covers the set $F$. Let $\theta_0$ be as in Theorem \ref{engulfthm} and $K$ as in Lemma \ref{KKlem}. Note that, by Lemma \ref{KKlem} and $K=2\theta_0^2>4\theta_0$, we have
$$S_u(x, 4\theta_0\bar h(x))\subset S_u(x, K\bar h(x))\subset S_u(0, \hat K h)\subset\subset\Omega.$$
By the Vitali covering Lemma \ref{Vita_cov}, we can select a 
subcollection of disjoint sections $S_j := S_u(x_j,\bar h(x_j))$ such that $$F \subset \bigcup_{j=1}^{\infty} S_u(x_j, K\bar h(x_j)).$$ 
The volume estimates in Lemma \ref{vol-sec1} then imply that, for each $j$, we have $$|S_u(x_j, K\bar h(x_j))|\leq
C(n,\lambda,\Lambda) |S_u(x_j,\bar h(x_j))|.$$ 
By construction, $S_j \subset F$ and $|S_j \cap E| \leq (1-\delta) |S_j|$. Thus, we have that $|S_j \cap (F \setminus E)| \geq \delta |S_j|$. Therefore
\begin{align*}
|F \setminus E| \geq \sum_{j=1}^{\infty} |S_j \cap (F \setminus E)| 
 &\geq \sum_{j=1}^{\infty} \delta |S_j| \\
& \geq \frac{\delta}{C(n,\lambda,\Lambda)} \sum_{j=1}^\infty|S_u(x_j, K\bar h(x_j))|  \geq \frac{\delta}{C(n,\lambda,\Lambda)} |F|.
\end{align*}
Hence $|E|\leq (1-c_2\delta)|F|$ where $c_2 = C(n,\lambda,\Lambda)^{-1}$.
\end{proof}

\section{Measure estimate and sliding paraboloids from below}
\label{measure_sec}
In this section, we prove the measure estimate for supersolution of (\ref{LMAeq}) by sliding paraboloids from below.
Our measure estimate states as follows.
\begin{lem}[Measure estimate]\label{meas_lem}  Assume that (\ref{pinch1}) and (\ref{unilamU}) are satisfied in $\Omega$. 
 Suppose that $v\geq 0$ is a $W^{2, n}_{\loc}(\Omega)$ solution of 
 \begin{equation}a^{ij}v_{ij} + b\cdot Dv + cv\leq f
  \label{meas_eq}
 \end{equation}
 in a normalized section $S_u(0, 4t_0)\subset\subset\Omega$. 
 There are small, universal constants $\delta_1>0,\alpha_1(n,\lambda,\Lambda)>0, \e_1>0$ and a large, universal constant $M_1(n,\lambda,\Lambda)>1$ with the following properties. If 
 $$\inf_{S_u(0, \alpha_1 t_0)}v\leq 1$$ 
 and $$\|b\|_{L^n(S_u(0, 4t_0))}+ \|c^{-}\|_{L^n(S_u(0, 4t_0))} + \|f^{+}\|_{L^n(S_u(0, 4t_0))}\leq \e_1$$
 then 
 $$|\{v>M_1\}\cap S_u(0, t_0)|\leq (1-\delta_1) |S_u(0, t_0)|.$$
 \end{lem}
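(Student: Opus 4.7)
The plan is to adapt Savin's sliding-paraboloid method to the degenerate/singular Monge--Amp\`ere setting by working with the generalized paraboloids of Definition \ref{gparabo_def} attached to the potential $u$. Fix a large universal opening $M$ and a small universal constant $\alpha_1$, both to be chosen. For each vertex $y\in Y:=S_u(0,\alpha_1 t_0)$, slide the function $x\mapsto t-M\varphi_y(x)$, with $\varphi_y(x):=u(x)-u(y)-Du(y)\cdot(x-y)\ge 0$, from $t=-\infty$ upward until it first touches $v$ from below in $\overline{S_u(0,4t_0)}$; denote by $t(y)$ the largest such $t$ and by $x^*(y)$ a contact point. Testing the touching inequality at $\bar x\in S_u(0,\alpha_1 t_0)$ with $v(\bar x)\le 1$ and using the engulfing property (Theorem \ref{engulfthm}) to bound $\varphi_y(\bar x)\le \theta_0\alpha_1 t_0$ gives $t(y)\le 1+M\theta_0\alpha_1 t_0=:M_1$, so in particular $v(x^*)\le M_1$. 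From $v\ge 0$ one also has $\varphi_y(x^*)\le M_1/M$, so with $M$ large and $\alpha_1$ small the inclusion property (Theorem \ref{pst}(i)) forces $x^*(y)\in S_u(0,t_0)$.

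The next step is to bound the measure of the contact set $\Gamma:=\{x^*(y):y\in Y\}$ from below by an area-formula argument. At each (a.e.) interior contact point one has $Dv(x^*)=-M[Du(x^*)-Du(y)]$ and $D^2v(x^*)\ge -MD^2u(x^*)$, so the map $\Phi(x):=Du(x)+Dv(x)/M$ has positive-semidefinite differential $D\Phi=D^2u+D^2v/M$ on $\Gamma$ and satisfies $\Phi(x^*(y))=Du(y)$. Together with the bijectivity of $Du$ coming from strict convexity of $u$,
$$\lambda|Y|\le\int_Y\det D^2u\,dy=|Du(Y)|=|\Phi(\Gamma)|\le\int_\Gamma\det\bigl(D^2u+D^2v/M\bigr)\,dx.$$
The core of the proof is then a pointwise upper bound for this Jacobian on $\Gamma$ via the equation. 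Applying Lemma \ref{LA_lem}(a) to the positive-semidefinite pair $(U,\,D^2u+D^2v/M)$ gives
$$\det\bigl(D^2u+D^2v/M\bigr)\le\frac{1}{n^n\det U}\Bigl[n\det D^2u+\trace(UD^2v)/M\Bigr]^n.$$
Writing $A=\tilde\lambda U+(A-\tilde\lambda U)$ and pairing with $D^2v+MD^2u\ge 0$ yields $\tilde\lambda\trace(UD^2v)\le\trace(AD^2v)+Mn(\tilde\Lambda-\tilde\lambda)\det D^2u$, which, combined with the supersolution inequality and the bounds $v(x^*)\le M_1$ and $|Dv(x^*)|=M|Du(x^*)-Du(y)|\le CM$ (the latter from the $C^{1,\alpha_\ast}$ regularity of $u$, Theorem \ref{C1alpha}), leads to a pointwise estimate
$$\det\bigl(D^2u+D^2v/M\bigr)\le C_1+C_1\bigl[(f^+)^n+|b|^n+(c^-)^n\bigr]\quad\text{on }\Gamma.$$
Integrating over $\Gamma$ and absorbing the resulting $O(\e_1^n)$ term using the $L^n$-smallness hypothesis gives $|\Gamma|\ge c|Y|$ with $c$ universal; the volume estimate (Lemma \ref{vol-sec1}) then yields $|\Gamma|\ge\delta_1|S_u(0,t_0)|$ for some universal $\delta_1>0$, and the inclusion $\Gamma\subset\{v\le M_1\}\cap S_u(0,t_0)$ finishes the proof.

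The main technical obstacle is the pointwise Jacobian bound in the degenerate/singular regime: because $A$ is only comparable to the cofactor matrix $U$ (not to the identity), the usual Euclidean AM--GM estimate on $\det(D^2u+D^2v/M)$ by its trace fails. The remedy is to use Lemma \ref{LA_lem}(a) with the cofactor weight $U$ and then bridge $\trace(UD^2v)$ to $\trace(AD^2v)$ via the comparability $A\sim U$ before invoking the supersolution inequality. A secondary geometric difficulty is tuning the parameters $\alpha_1$ (small) and $M$ (large) through the engulfing and inclusion properties so that the contact points stay strictly inside $S_u(0,t_0)$ and the gradient is uniformly controlled there via Theorem \ref{C1alpha}.
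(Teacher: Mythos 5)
Your proposal is correct and follows essentially the same route as the paper: sliding generalized paraboloids with vertices in a small section $S_u(0,\alpha_1 t_0)$, bounding the contact value by $M_1$ via the engulfing property, localizing the contact set inside $S_u(0,t_0)$, and converting the first/second-order touching conditions plus the comparability $\tilde\lambda U\leq A\leq\tilde\Lambda U$ and the supersolution inequality into a Jacobian bound that the area formula turns into a lower measure bound for $\{v\leq M_1\}$. Your minor variations — localizing contact points through $\varphi_y(x^\ast)\leq M_1/M$ and Theorem \ref{pst}(i) instead of boundary values on $\partial S_u(0,t_0)$, and bounding $\det(D^2u+D^2v/M)$ via Lemma \ref{LA_lem}(a) with cofactor weight rather than the paper's sandwich $-aD^2u\leq D^2v\leq Ca(1+|b|+c^-+f^+)D^2u$ from Lemma \ref{LA_lem}(b) — are cosmetic and both valid.
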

\begin{proof}[Proof of Lemma \ref{meas_lem}] Since $B_1(0)\subset S_u(0, 4t_0)\subset B_n(0)$, and  $S_u(0, 4t_0)\subset\subset\Omega$, the volume estimates in Lemma 
\ref{vol-sec1} give that 
$$C_1(n,\lambda,\Lambda)^{-1}\leq t_0\leq C_1(n,\lambda,\Lambda).$$
Our proof below uses this range of $t_0$ but not its specific value, so we can assume without loss of generality that $t_0=1$.
Denote for simplicity
 $S_t=S_u(0, t) ~\text{for } t>0.$\\
 Suppose $v(x_0)\leq 1$ at $x_0\in S_{\alpha_1}$ where $\alpha_1\in (0, 1/8)$. Consider the set of vertices $V=S_{\alpha_1}$. 
 {\it For each $y\in V$, we slide 
 the generalized paraboloids $ -a [u(x) -D u(y)\cdot(x-y) - u(y)] + C_y$ of opening $a>0$ until they touch
 the graph of $v$ from below at some point $x\in \overline S_1$, called the contact point.}
 In terms of the notation first introduced in \cite{Sa}, we define the contact set by
\begin{multline*}A_a (V, u, S_1, v)=\{ x\in \overline{S_1}: \text{there is } y\in V~\text{such that } \inf_{\overline{S_1}} \left(v + a [u(\cdot)- Du(y)\cdot (\cdot-y) - u(y)]\right)\\= v(x)
+ a[u(x)-Du(y)\cdot (x-y)- u(y)]\}.
\end{multline*}
{\bf Claim.} There exists a large, universal constant 
 $a(n,\lambda,\Lambda)$ such that
$$A_a (V, u, S_1, v)\subset S_1.$$
Indeed, for each $y\in V$, we consider the function
 $$P(x) = v(x) + a [u(x) -Du(y)\cdot(x-y) - u(y)]$$
 and look for its minimum points on $\overline{S_1}$. 
 Because $y\in S_{\alpha_1}\subset S_{1/8}$, we can use Theorem \ref{pst} to get a universal constant $c_1(n,\lambda,\Lambda)$ such that $S_u(y, c_1)\subset S_1$. Therefore, 
 if $x\in\p S_1$, then we have
 \begin{equation}P(x)\geq a [u(x) -D u(y)\cdot(x-y) - u(y)] \geq a c_1(n,\lambda,\Lambda).
  \label{PbdrS1}
 \end{equation}
Let $\theta_0(n,\lambda,\Lambda)$ be the universal constant in Theorem \ref{engulfthm}. Then,
at $x_0$, we have
 \begin{equation}P(x_0)\leq 1 + a [u(x_0) -Du(y)\cdot(x_0-y) - u(y)] \leq 1 + a \alpha_1\theta_0.
\label{Px0}  
 \end{equation}
The last inequality follows from the engulfing property. Indeed, we have $x_0, y\in S_{\alpha_1}$ and hence by the engulfing property of sections in Theorem
 \ref{engulfthm}, $x_0, y\in S_u(0,\alpha_1)\subset 
 S_u(y,\theta_0 \alpha_1)$.
 Consequently, $$u(x_0) -Du(y)\cdot(x_0-y) - u(y) \leq \theta_0\alpha_1.$$
Fix $\alpha_1(n,\lambda,\Lambda)>0$ small, universal and $a, M_1$ large depending  only on $n,\lambda$ and $\Lambda$, such that
 \begin{equation}M_1= 2 + a \alpha_1\theta_0 < a c_1.
  \label{M1eq}
 \end{equation}
Then, from (\ref{PbdrS1}) and (\ref{Px0}), we deduce that $P$ attains its minimum on $\overline{S_1}$ at a point $x\in S_1$. 
Hence  $A_a (V, u, S_1, v)\subset S_1$, proving the Claim.

In the above argument, we find from (\ref{Px0}) and (\ref{M1eq}) that
 \begin{equation}v(x) \leq P(x_0)< M_1~\text{for all}~x\in  A_a (V, u, S_1, v).
  \label{vupperM}
 \end{equation}
 Now, for each vertex $y\in V$, we  look at
the contact point $x\in S_1$, that is
$$ v(x)
+ a[u(x)-Du(y)\cdot (x-y)- u(y)]\leq v(z)
+ a[u(z)-Du(y)\cdot (z-y)- u(y)]~\text{for all~} z\in\overline{S_1}.$$
Then
$$D v(x)= a (Du(y)-D u(x))$$
which gives
\begin{equation}D u(y) = D u(x) +\frac{1}{a}D v(x)
 \label{uuv}
\end{equation}
and, from the gradient estimate in Lemma \ref{slope-est}, 
\begin{equation}
 \label{vxbound}
 |Dv(x)|\leq a C(n,\lambda,\Lambda).
\end{equation}
From the minimality of $P$ at $x$, we have 
\begin{equation}
D^2 v(x) \geq -a D^2 u(x).
\label{uvD2}
\end{equation}
Hence, upon differentiating (\ref{uuv}) with respect to $x$, we find
\begin{equation}D^2 u(y) D_x y = D^2 u(x) +\frac{1}{a} D^2 v(x)\geq 0.
 \label{uvD22}
\end{equation}
Using (\ref{unilamU}) and (\ref{uvD2}) at $x$, we  have 
$$\tilde\lambda U^{ij} (v_{ij} + a u_{ij})\leq a^{ij}(v_{ij} + a u_{ij})$$
from which we deduce that
$$na\tl + \tl \trace((D^2 u)^{-1}D^2 v) \leq (\det D^2 u(x))^{-1}[a^{ij} v_{ij} + a a^{ij} u_{ij}]\leq  (\det D^2 u(x))^{-1}a^{ij} v_{ij} + an\tL.$$
It follows that
$$ \trace((D^2 u)^{-1}D^2 v) \leq \tl^{-1} (\det D^2 u(x))^{-1} a^{ij} v_{ij} + an [\tL/\tl -1].$$
Now using the equation (\ref{meas_eq}) only at $x$, 
that is
$$\trace(AD^2 v)\leq f-b\cdot Dv -cv\leq f^{+}(x) + |b(x)||Dv(x)| + c^{-}(x) v(x),$$
and recalling $\det D^2 u(x)\geq\lambda$ by (\ref{pinch1}), we find that
\begin{eqnarray*}\trace ((D^2 u)^{-1} D^2 v(x))\leq \tl^{-1}
\lambda^{-1}\left(|b(x)| |Dv(x) | + c^{-}(x) v(x) + f^{+}(x)\right) + an [\tL/\tl -1].
 \end{eqnarray*}
Combining this with (\ref{uvD2}) and the basic estimates in Lemma \ref{LA_lem} (b), we find
\begin{eqnarray} -a D^2 u(x)\leq D^2 v(x)&\leq& \left[a C(n,\tl,\tL) + \tl^{-1}\lambda^{-1}\left(|b(x)| |Dv(x) | + c^{-}(x) v(x) + f^{+}(x)\right)\right] D^2 u(x) \nonumber\\ &\leq& 
a C(n, \lambda,\Lambda, \tl,\tL) (1 + |b(x)| + c^{-}(x) + f^{+}(x)) D^2 u(x).
 \label{uvupdown2}
\end{eqnarray}
Now, taking the determinant in (\ref{uvD22}) and invoking (\ref{uvupdown2}), we obtain
\begin{eqnarray*}\det D^2 u (y)\det D_x y&=& \det (D^2 u(x) +\frac{1}{a} D^2 v(x))\\ &\leq& C( n,\lambda,\Lambda, \tl,\tL)(1 + |b(x)|^n + |c^{-}(x)|^n + |f^{+}(x)|^n) \det D^2 u(x).
 \end{eqnarray*}
This together with (\ref{pinch1}) implies the bound 
$$0\leq \det D_x y \leq C(n,\Lambda,\lambda,\tl,\tL)(1 + |b(x)|^n + |c^{-}(x)|^n + |f^{+}(x)|^n).$$
Recall from (\ref{vupperM}) that, $v<M_1$ on the set $E:=A_a (V, u, S_1, v)$ of contact points. Moreover, by definition, $E$ is a closed set and thus measurable.
By the area formula, we have
\begin{eqnarray*}|S_{\alpha_1}|=\abs{V}= \int_{E}\abs{\det D_x y} &\leq& C(n,\Lambda,\lambda,\tl,\tL)\left(\abs{E} + \|b\|^n_{L^n(E)}+ \|c^{-}\|^n_{L^n(E)} + \|f^{+}\|^n_{L^n(E)} \right)
 \\&\leq& C(n,\lambda,\Lambda,\tl,\tL) |\{v<M_1\}\cap S_1| + C(n,\lambda,\Lambda,\tl,\tL) \e_1^n.
 \end{eqnarray*}
If $\e_1$ is small, universal, then $C(n,\lambda,\Lambda,\tl,\tL) \e_1^n\leq |S_{\alpha_1}|/2$ by the volume estimate of sections in Lemma \ref{vol-sec1} and thus
$$|S_{\alpha_1}| \leq 2C(n,\lambda,\Lambda,\tl,\tL) |\{v<M_1\}\cap S_1|.$$
Then, using the volume estimate of sections in Lemma \ref{vol-sec1}, we find that $$|S_1|\leq C^{\ast} |\{v<M_1\}\cap S_1|$$ for some $C^{\ast}>1$ universal. 
The conclusion of the Lemma holds with $\delta_1= 1/C^{\ast}.$
\end{proof}

\section{Doubling estimate and sliding paraboloids from below}
\label{double_sec}
In this section, we prove the doubling estimate for supersolution of (\ref{LMAeq}) by sliding paraboloids from below.
The key doubling estimate is the following lemma.
\begin{lem}[Doubling estimate]
\label{double_lem} Assume that (\ref{pinch1}) and (\ref{unilamU}) are satisfied in $\Omega$. 
Suppose that $v\geq 0$ is a $W^{2, n}_{\loc}(\Omega)$ solution of \begin{equation}a^{ij}v_{ij} + b\cdot Dv + cv
\leq f
                                                                   \label{doub_eq}
                                                                  \end{equation}
 in a normalized section $S_u(0, 4t_0)\subset\subset\Omega$.  
Let $\alpha\in (0, 1/8)$. There
is a small constant $\e_2$ depending only on $n,\lambda,\Lambda,\tl,\tL$ and $\alpha$ so that if
$$\|b\|_{L^n(S_u(0, 4t_0))}+ \|c^{-}\|_{L^n(S_u(0, 4t_0))} + \|f^{+}\|_{L^n(S_u(0, 4t_0))}\leq \e_2$$
 and if $v\leq 1$ at some point in $\overline {S_u(0, t_0)}$ then $v\leq M_2(n, \Lambda, \lambda,\tl,\tL, \alpha)$ in $S_u(0, \alpha t_0).$
\end{lem}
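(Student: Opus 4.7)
The plan is to mirror the sliding paraboloid argument of Lemma \ref{meas_lem}, but apply it to the transformed function $w = -(v+\eta)^{-\e}$ with universal constants $\e > 0$ and small $\eta > 0$, exploiting the key observation highlighted in the introduction: $w$ is a supersolution of an equation of the same structural type as $v$. Indeed, since $A \geq \tl U \geq 0$, the chain rule gives
$$a^{ij} w_{ij} = \e(v+\eta)^{-\e-1} a^{ij} v_{ij} - \e(\e+1)(v+\eta)^{-\e-2} a^{ij} v_i v_j \leq \e(v+\eta)^{-\e-1} a^{ij} v_{ij},$$
and inserting the inequality for $v$ yields $a^{ij} w_{ij} + b\cdot Dw \leq \tilde f + \tilde c\, w$ with $\tilde c, \tilde f$ still controlled in $L^n$ by the smallness $\e_2$. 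Moreover $w \leq 0$ everywhere, and the hypothesis $v(x_1) \leq 1$ at some $x_1 \in \overline{S_u(0, t_0)}$ forces $w(x_1) \leq -(1+\eta)^{-\e}$, so $w$ is bounded uniformly away from $0$ at $x_1$.

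Next, for each vertex $y$ in a suitable subset of $S_u(0, t_0)$ I would slide a modified generalized paraboloid
$$P_y(x) = C - a\bigl[u(x) - u(y) - Du(y)\cdot(x-y)\bigr] - a\, h_\delta(x-y)$$
from below until it touches $w$ in $\overline{S_u(0, t_0)}$. The correction $h_\delta$ is a smooth convex function with $h_\delta(0) = 0$ and $Dh_\delta(0) = 0$, chosen so that $D^2 h_\delta$ provides extra positive definiteness in the directions where $D^2 u$ degenerates. The role of $h_\delta$ is exactly that of the analogous modifier in the classical Caffarelli--Guti\'errez subsolution construction: absent the correction, the generalized paraboloid is too flat in the degenerate directions of $D^2 u$ and the touching point may escape to the boundary of the section, ruining the use of the equation. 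At a contact point $z = z(y)$, the first- and second-order optimality conditions are $Du(y) = Du(z) + a^{-1}Dw(z) + Dh_\delta(z-y)$ and $D^2 u(z) + a^{-1}D^2 w(z) + D^2 h_\delta(z-y) \geq 0$. Combining the second with the supersolution inequality for $w$, the bound $A \geq \tl U$, and Lemma \ref{LA_lem}, one derives two-sided bounds on the matrix $D^2 u(z) + a^{-1}D^2 w(z) + D^2 h_\delta(z-y)$ in terms of $D^2 u(z) + D^2 h_\delta(z-y)$, modulated by $1 + |b(z)| + |c^-(z)| + |f^+(z)|$.

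Taking determinants in $D^2 u(y)\, D_x y = D^2 u(z) + a^{-1}D^2 w(z) + D^2 h_\delta(z-y)$ and using (\ref{pinch1}) gives a pointwise bound on $|\det D_x y|$ at contact points, and the area formula yields $|V| \leq C\int_E (1 + |b|^n + |c^-|^n + |f^+|^n)\, dx$, where $V$ is the vertex set and $E$ is the contact set. Independently, the constraint $P_y(x_1) \leq w(x_1) \leq -(1+\eta)^{-\e}$, together with the engulfing bound on $u(x_1)-u(y)-Du(y)\cdot(x_1-y)$, forces $C \leq -c_0 < 0$ once the opening $a$ (and the size of $h_\delta$) are chosen suitably small, from which $w(z) \leq C \leq -c_0$ and hence $v(z) \leq M$ on $E$ with $M$ universal. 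The claimed pointwise bound $v \leq M_2$ in $S_u(0, \alpha t_0)$ then follows by choosing $V$ to cover $S_u(0, \alpha t_0)$ in a scaling-invariant way and combining the resulting measure estimates with the Vitali covering lemma (Lemma \ref{Vita_cov}) and an iteration argument that exploits the engulfing property (Theorem \ref{engulfthm}). The main obstacle is the precise design of $h_\delta$: it must carry enough Hessian to compensate the degeneracy of $D^2 u$ in all relevant directions while still respecting the scaling of the sections of $u$, so that the bounds on $\det D_x y$ do not deteriorate across the iteration and so that the $\e_2$-smallness of the lower order data absorbs every error term.
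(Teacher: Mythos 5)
Your proposal correctly identifies the two main ingredients — the transformation $w=-(v+1)^{-\e}$ (a supersolution of an equation of the same type) and the sliding of generalized paraboloids modified by a convex corrector — but the final step has a genuine gap. As you set it up, the sliding argument produces only a measure estimate: a contact set $E$ of positive measure on which $v\leq M$. You then assert that the pointwise bound $v\leq M_2$ on all of $S_u(0,\alpha t_0)$ follows by letting the vertex set cover $S_u(0,\alpha t_0)$ and "combining the resulting measure estimates with the Vitali covering lemma and an iteration argument." This cannot work: an estimate of the form $|\{v\leq M\}\cap S|\geq c|S|$ is exactly the content of Lemma \ref{meas_lem}, and no covering/iteration of such estimates upgrades to a supremum bound on a full subsection, since the good set can avoid any prescribed point; converting measure information into pointwise information is precisely what the doubling estimate is for, so the argument is circular at this juncture. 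Symptomatically, your Jacobian bound retains the constant term, $|V|\leq C\int_E(1+|b|^n+\cdots)$, which can only ever reproduce the measure estimate.

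The paper's proof instead argues by contradiction: assume $v(x_0)=1$ at some $x_0\in\overline{S_1}$ but $v>M_2$ everywhere in $S_\alpha$. Comparing the sliding functional $Q_y$ at $x_0$ with its values on $\p S_3$ and on $S_\alpha$ (where $[v+1]^{-\e}\leq M_2^{-\e}$ is tiny) forces the contact point $x$ into the annulus $S_3\setminus S_\alpha$, where convexity gives $|Du(x)|\geq \alpha/n$ and hence $|Dw(x)|\geq \alpha/(2n)$. Feeding this gradient lower bound into the second-order touching condition, with $\e$ chosen small relative to $\delta\alpha^2$, yields $\e[v+1]^{-\e-1}\trace((D^2u)^{-1}D^2v)\geq n\tL/\tl$, and then the equation forces $|b(x)|+c^{-}(x)+f^{+}(x)\geq c_1>0$ at \emph{every} contact point (Claim 4). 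Consequently $|\det D_x y|\leq C(|b|^n+|c^{-}|^n+|f^{+}|^n)$ with \emph{no} constant term, and the area formula gives $|S_\beta|\leq C\e_2^n$, a contradiction for $\e_2$ small. This mechanism — contact points forced into the large-gradient annulus, which in turn forces the lower-order data to be non-small there — is the heart of the proof and is absent from your proposal. A secondary but real issue: you translate the corrector, writing $h_\delta(x-y)$, but the degeneracy of $D^2u$ is location-dependent, so a translated corrector cannot compensate it for all vertices $y$ (this is exactly the failure of translation invariance discussed in Section 1.2); the paper's $h_\delta$ is a fixed function of $x$, solving $\det D^2 h_\delta=(1+2\tL/\tl)^n\Lambda$ on the bad set $H_\delta$ where $\|D^2u\|$ is large, and appears untranslated in every $Q_y$.
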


A simpler version of Lemma \ref{double_lem} is the following:
\begin{lem}[Doubling estimate for equations without lower order terms]
\label{double_lem1} Assume that (\ref{pinch1}) is satisfied in $\Omega$. Suppose that $v\geq 0$ is a $W^{2, n}_{\loc}(\Omega)$ solution of $U^{ij}v_{ij}\leq 0$ in a normalized 
section $S_4:= S_u(0, 4)\subset\subset\Omega$. Denote for simplicity
 $S_t=S_u(0, t)$ for $t>0$. Let $\alpha\in (0, 1/8)$.
If $v\geq 1$ in $S_\alpha$ then $v\geq c(n, \Lambda, \lambda,\alpha)$ in $S_{1}.$
\end{lem}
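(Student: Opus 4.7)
The plan is to adapt the sliding-paraboloid method used for the measure estimate in Lemma \ref{meas_lem} to the doubling setting, by applying it not to $v$ itself but to the transformed function $w:=-v^{-\e}$ for a small universal $\e>0$. After the regularization $v\mapsto v+\eta$ (which guarantees $v>0$ on $S_4$), a direct computation using $U\geq 0$ and $U^{ij}v_{ij}\leq 0$ yields
\[
U^{ij}w_{ij}=-\e(\e+1)v^{-\e-2}U^{ij}v_iv_j+\e v^{-\e-1}U^{ij}v_{ij}\leq 0,
\]
so $w$ is again a (degenerate) supersolution. The hypothesis $v\geq 1$ on $S_\alpha$ becomes $w\geq -1$ on $S_\alpha$, and the desired conclusion $v\geq c$ on $S_1$ is equivalent to $w\geq -c^{-\e}$ there. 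Argue by contradiction: assume $w(\bar x)=-L$ at some $\bar x\in\overline{S_1}$ with $L$ arbitrarily large.

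For vertices $y$ in $V:=S_\alpha$, slide from below the modified generalized paraboloid
\[
P_y(x)=C_y-a\bigl[u(x)-u(y)-Du(y)\cdot(x-y)\bigr]+h_\e(x),
\]
increasing $C_y$ until $P_y$ first touches $w$ on $\overline{S_1}$. Here $a$ is a universal opening and $h_\e$ is a convex correction (for instance a small multiple of $u(\cdot)-u(0)-Du(0)\cdot(\cdot)$) whose role is to compensate for the lack of uniform ellipticity of $U$: in directions where $D^2u$ is very large, $U$ is correspondingly small, yet $U^{ij}(h_\e)_{ij}$ remains of order $\det D^2u\in[\lambda,\Lambda]$ via the identity $U^{ij}u_{ij}=n\det D^2u$. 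Using the exclusion property (Theorem \ref{pst}) to bound the convex excess from below on $\partial S_1$, and choosing $a$ appropriately as in the Claim inside the proof of Lemma \ref{meas_lem}, one arranges that all contact points lie in $S_1$.

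At a contact point $x$, the minimality of $w-P_y$ yields the positive matrix $M:=D^2w+aD^2u-D^2h_\e\geq 0$ and the change of variables $Du(y)=Du(x)+a^{-1}(Dw(x)-Dh_\e(x))$, so $D^2u(y)\,D_xy=a^{-1}M$. Tracing against $(D^2u)^{-1}$ and combining $U^{ij}w_{ij}\leq 0$, the control $U^{ij}(h_\e)_{ij}\sim\det D^2u$, and the trace--determinant inequality Lemma \ref{LA_lem}(a), one obtains $\det M\leq C(n,\lambda,\Lambda)a^n$, whence $\det D_xy\leq C(n,\lambda,\Lambda)$. The area formula then gives $|E|\geq c(n,\lambda,\Lambda,\alpha)$ for the contact set $E\subset S_1$. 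On the other hand, the minimality characterization $C_y=\min_{z\in\overline{S_1}}\bigl(w(z)+a[u(z)-u(y)-Du(y)\cdot(z-y)]-h_\e(z)\bigr)$ evaluated at $z=\bar x$, together with the engulfing property (Theorem \ref{engulfthm}), forces $C_y\leq -L+O(a)$. Hence $w(x)=P_y(x)\leq C_y+\|h_\e\|_{L^\infty}\leq -L/2$ on $E$ for $L$ large; equivalently, $v$ is arbitrarily small on a set $E\subset S_1$ of universal positive measure.

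The contradiction is closed by applying the measure estimate (Lemma \ref{meas_lem}) after rescaling: on a suitable subsection intersecting both $E$ and $S_\alpha$, the rescaled $v/(L/2)^{-1/\e}$ is a supersolution that is $\leq 1$ at a point in $E$ but $\geq (L/2)^{1/\e}$ on (a portion of) $S_\alpha$, and the upper bound $|\{\text{rescaled }v>M_1\}|\leq(1-\delta_1)|\cdot|$ directly contradicts the lower bound $|E|\gtrsim|S_\alpha|$ once $L$ is large enough. The main obstacle is the simultaneous calibration of $\e$, $a$ and $h_\e$: the correction $h_\e$ must be large enough to pass the trace--determinant estimate through the directions of degenerate ellipticity, yet small enough that $\|h_\e\|_{L^\infty}$ does not spoil the sharp upper bound $w\leq -L/2$ at contact. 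This delicate balancing is exactly the role of the function $h_\e$ mentioned in the introduction, and is what distinguishes the doubling proof from the more straightforward measure-estimate proof.
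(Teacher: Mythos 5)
Your overall strategy (slide generalized paraboloids, corrected by some $h_\e$, underneath $-v^{-\e}$) is not the route the paper takes for this particular lemma --- the paper proves Lemma \ref{double_lem1} by the classical comparison argument with the explicit barrier $C\bigl[(u-h_\e)^{-m}-2^{-m}\bigr]$ --- but it is exactly the scheme the paper uses for the general Lemma \ref{double_lem}, so the approach is viable in principle. The computation $U^{ij}w_{ij}\leq 0$ for $w=-v^{-\e}$, the first- and second-order contact relations, and the determinant bound $\det D_xy\leq C(n,\lambda,\Lambda)$ leading to $|E|\geq c(n,\lambda,\Lambda,\alpha)$ are all sound.

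The gap is in how you close the argument. Knowing that $v\leq (L/2)^{-1/\e}$ on a contact set $E\subset S_1$ of universal positive measure, while $v\geq 1$ on $S_\alpha$, does \emph{not} contradict Lemma \ref{meas_lem}. That lemma, applied to any rescaling of $v$, only asserts that $\{(\text{rescaled }v)>M_1\}$ fails to cover a $\delta_1$-fraction of the relevant section; since $|S_\alpha|$ is far smaller than $(1-\delta_1)|S_1|$ and $E$ lies in the complementary sublevel set, both conclusions coexist happily with the picture you derived, for every $L$. (If such a soft argument worked, the doubling estimate would be a formal corollary of the measure estimate and would not require a separate proof.) The correct way to finish within your framework is the one used in the proof of Lemma \ref{double_lem}: use the second-order touching inequality $D^2w\geq -aD^2u+D^2h_\e$ together with the lower bound $|Dw(x)|\geq c(\alpha)/n$ at contact points $x\notin S_\alpha$ (coming from convexity of $u$) to show $\e v^{-\e-1}\trace\bigl((D^2u)^{-1}D^2v(x)\bigr)\geq c>0$, which directly contradicts $U^{ij}v_{ij}\leq 0$; no area formula or measure estimate is needed in the homogeneous case. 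This also exposes the second problem: your proposed correction $h_\e=c\,[u-\text{affine}]$ cannot play the required role. Adding a multiple of $u$ to the paraboloid merely changes its opening from $a$ to $a-c$, and yields only $\trace\bigl((D^2u)^{-1}D^2h_\e\bigr)=cn$, which for small $c$ does not dominate the term $-n$ contributed by the paraboloid on the bad set $H_\e=\{\|D^2u\|\geq 1/\e\}$, where the gradient term $u^{ij}v_iv_j\geq |Dv|^2/\trace(D^2u)$ degenerates. One genuinely needs, as in the paper, a convex $h_\e$ solving a Monge--Amp\`ere equation with right-hand side $2^n\Lambda$ concentrated near $H_\e$, so that $\trace\bigl((D^2u)^{-1}D^2h_\e\bigr)\geq n\bigl(\det D^2h_\e/\det D^2u\bigr)^{1/n}\geq 2n$ there while $\|h_\e\|_{L^\infty}+\|Dh_\e\|_{L^\infty}=O(\e^{1/n})$ stays small.
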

For reader's convenience, we present a proof of Lemma \ref{double_lem1} using the traditional construction of barriers. Another purpose in giving this proof is to motivate 
the introduction of the modification functions $h_{\e}$ and $h_\delta$ in the proof of Lemma \ref{double_lem}. We do not need these functions in the uniformly elliptic equations
 (such as (\ref{LMAeq}) with $u(x)=\frac{1}{2}|x|^2$) but they seem to be unavoidable in the degenerate and singular equations.

\begin{proof}[Proof of Lemma \ref{double_lem1}] Subtracting an affine function from $u$, we can assume that $u\geq 0$, $u(0)=0, Du(0)=0$. 
Recall that $$B_1(0)\subset S_u(0, 4)\subset B_n(0).$$
To prove the lemma, it suffices to construct a subsolution $w: S_{2}\backslash S_{\alpha}\longrightarrow \R$, i.e.,
$U^{ij}w_{ij}\geq 0$,
with the following properties:
$$w\leq 0 ~\text{on } \partial S_2,~w\leq 1 ~\text{on }\partial S_\alpha, w\geq c(n,\Lambda,\lambda,\alpha) ~\text{in } S_{1}\backslash S_{\alpha}.$$
Our first guess is
$$w= C(\alpha, m) (u^{-m}- 2^{-m})$$
where $m$ is large, depending only on $n,\lambda,\Lambda$ and $\alpha$. 

Let $(u^{ij})_{1\leq  i, j\leq n}$ be the inverse matrix $(D^2 u)^{-1}$ of the Hessian matrix $D^2 u$. 
We can compute for $W= u^{-m}-2^{-m}$
\begin{equation}u^{ij}W_{ij} = m u^{-m-2}[(m+1) u^{ij}u_{i}u_{j}-u u^{ij} u_{ij}] = mu^{-m-2}[(m+ 1) u^{ij} u_{i}u_{j}- n u].
 \label{uW1}
\end{equation}
By Lemma \ref{LA_lem}(c) 
\begin{equation}u^{ij}u_{i}u_{j} \geq \frac{\abs{D u}^2}{\trace (D^2 u)}.
 \label{D2uDu}
\end{equation}
If $x\in S_2\setminus S_{\alpha}$ and $y=0$ then from from the convexity of $u$, we have
$0=u(y)\geq u(x)+ D u(x)\cdot (0-x)$ and 
therefore,
\begin{equation}\abs{Du(x)}\geq \frac{u(x)}{\abs{x}}\geq \frac{\alpha}{n}\equiv 2c_0(n,\alpha)~\text{on}~S_2\setminus S_{\alpha}.
 \label{Duout}
\end{equation}
In view of (\ref{D2uDu}) and (\ref{Duout}), it is clear that in order to obtain $u^{ij}W_{ij}\geq 0$  using (\ref{uW1}), we only have trouble when $\|D^2u\|$ is unbounded. But the set of bad points, i.e., 
where $\|D^2 u\|$ is large, is small. Here is how we see this.
Because $S_u(0, 4)$ is normalized, we can deduce from Aleksandrov maximum principle, Theorem \ref{Alekmp} applied to $u-4$, that
$$\dist(S_u(0, 3), \p S_u(0, 4))\geq c_1(n,\lambda,\Lambda)$$ for some universal $c_1(n,\lambda,\Lambda)>0$. By Lemma \ref{slope-est},
$D u$ is bounded on $S_3$. Now let $\nu$ denote the outernormal unit vector field on $\p S_3$. Then, using the convexity of $u$, we have $\|D^2u\|\leq \Delta u$. Thus, by the divergence
theorem,
\begin{equation*}\int_{S_3}\|D^2 u\| \leq \int_{S_3}\Delta u =\int_{\partial S_3} \frac{\partial u}{\p \nu}\leq C(n, \lambda,\Lambda).
\end{equation*}
Therefore, given $\varepsilon >0$ small, the set
$$H_{\varepsilon} = \{x\in S_3\mid \|D^2 u\|\geq \frac{1}{\varepsilon}\}.$$
has measure bounded from above by
$$|H_{\e}|\leq C \e.$$
To construct a proper subsolution bypassing the bad points in $H_{\e}$,
we only need to modify $w$ at bad points. Roughly speaking, the modification involves the convex solution to 
$$\det D^2 u_{\e} = 2^n\Lambda\chi_{H_{\varepsilon}}~\text{in } S_4,~ u_{\e}=0 ~\text{on}~\partial S_4.$$
Here we use $\chi_E$ to denote the characteristic function of the set $E$: $\chi_E(x)=1$ if $x\in E$ and $\chi_E(x)=0$ if otherwise.
The problem with this equation is that the solution is not in general smooth while we need two derivatives to construct the subsolution. But this smoothness
problem can be fixed, using approximation as in the proof of the doubling estimate in Caffarelli-Guti\'errez \cite[Theorem 2]{CG}, as follows. 

We approximate $H_\e$ by an open set $\tilde H_\e$ where $H_\e\subset \tilde H_\e\subset S_4$ and the measure of their difference is small, that is
$$|\tilde H_\e\setminus H_\e|\leq \e.$$
We introduce a smooth function $\varphi$ with the following properties:
$$\varphi=1~\text{in}~H_\e,~\varphi=\e~\text{in}~S_4\setminus \tilde H_\e,~\e\leq \varphi\leq 1~\text{in } S_4.$$
Let $h_{\e}$ be the convex solution to
\begin{equation}\det D^2 h_{\e} = 2^n\Lambda\varphi ~\text{in } S_4,~ h_{\e}=0 ~\text{on}~\partial S_4.
 \label{he_eq}
\end{equation}
Note that $h_\e\leq 0$ in $S_4$. Since the right hand side $2^n\Lambda\varphi$ of the above equation is smooth and strictly positive, 
by the $C^{2,\alpha}$ estimates for the Monge-Amp\`ere equation (see, for example, Caffarelli \cite[Theorem 2]{C2}), we have $h_\e\in C^{2,\alpha}(S_4)$ for all $\alpha\in (0, 1)$.
From the Aleksandrov maximum principle, Theorem \ref{Alekmp}, we have on $S_4$
$$|h_{\e}|\leq C_n \diam (S_4) \left(\int_{S_4} 2^n\Lambda \varphi\right)^{1/n}.$$
We need to estimate the above right hand side.
From the definitions of $\tilde H_\e$ and $\varphi$, we can estimate
$$\int_{S_4}\varphi= \int_{H_\e} + \int_{\tilde H_\e\setminus H_\e}\varphi + \int_{S_4\setminus \tilde H_\e}\e
\leq |H_\e| +  |\tilde H_\e\setminus H_\e| + \e C(n,\lambda,\Lambda)\leq C(n,\lambda,\Lambda)\e.$$
It follows that for some universal constant $C_1(n,\lambda,\Lambda)$, 
\begin{equation}|h_\e|\leq C_1(n,\lambda,\Lambda)\e^{1/n}~\text{on~} S_4.
 \label{uph_eq}
\end{equation}
By the gradient estimate in Lemma \ref{slope-est}, we have on $S_2$
\begin{equation}|D h_{\e}(x)|\leq \frac{- h_{\e}(x)}{\dist(S_3, \p S_4)}\leq C_2(n,\lambda,\Lambda)\e^{1/n}.
 \label{upDh_eq}
\end{equation}
We choose $\e$ small so that 
\begin{equation}C_1(n,\lambda,\Lambda)\e^{1/n} \leq 1/4, ~C_2(n,\lambda,\Lambda)\e^{1/n}\leq c_0(n,\alpha).
 \label{epchoice}
\end{equation}
Let $$\tilde V = (u - h_{\e})~ \text{and} ~\tilde W = \tilde V^{-m}- 2^{-m}.$$
Then, from (\ref{Duout}), (\ref{uph_eq})-(\ref{epchoice}), we have
\begin{equation}
 \label{gradV} |\tilde V|\leq 3~\text{and}~
 |D \tilde V |\geq c_0(n,\alpha)~\text{on}~ S_2\setminus S_{\alpha};~\alpha \leq \tilde V \leq 1 + 1/4 <5/4~\text{on}~S_1\backslash S_{\alpha}.
\end{equation}
Now, compute as before
\begin{eqnarray*}u^{ij}\tilde W_{ij}& =& m \tilde V^{-m-2}[(m+1) u^{ij}\tilde V_{i}\tilde V_{j} - \tilde V u^{ij} \tilde V_{ij}]\\&=&
m \tilde V^{-m-2}[(m+1) u^{ij}\tilde V_{i}\tilde V_{j} + \tilde V (u^{ij} (h_{\e})_{ij}-n)].
\end{eqnarray*}
We note that, by Lemma \ref{LA_lem} (a), and (\ref{he_eq}),
$$u^{ij} ({h_\e})_{ij} = \trace ((D^2 u)^{-1}D^2 h_{\e})\geq n (\det (D^2 u)^{-1} \det D^2 h_{\e})^{1/n}\geq 2n~\text{on}~H_{\e}.$$
It follows that
$u^{ij}\tilde W_{ij}\geq nm \tilde V^{-m-1}~\text{on}~H_{\e}.$
On $(S_2\setminus S_{\alpha})\backslash H_{\e}$, we have $\trace (D^2 u)\leq n\e^{-1}$. Thus, from (\ref{gradV}) and Lemma \ref{LA_lem} (c), we have
\begin{eqnarray*}u^{ij}\tilde W_{ij} &\geq& m \tilde V^{-m-2}[(m+1) u^{ij}\tilde V_{i}\tilde V_{j}  -n\tilde V ]\\
 &\geq& m \tilde V^{-m-2}[(m+1) \frac{|D \tilde V|^2}{\trace (D^2 u)}-n\tilde V]\\
 &\geq&  m \tilde V^{-m-2} [(m+1)n^{-1}\e c^2_0(n,\alpha)-n\tilde V] \geq 0 
 \end{eqnarray*}
if we choose $m$ large, depending only on $n,\lambda,\Lambda$ and $\alpha$. Therefore, 
$u^{ij}\tilde W_{ij} \geq 0~\text{on}~S_2\setminus S_{\alpha}$
and hence $\tilde W = \tilde V^{-m}- 2^{-m}$ is a subsolution to $u^{ij} v_{ij}\geq 0$ 
on $S_2\setminus S_{\alpha}$.

Finally, by (\ref{gradV}) and $\tilde W\leq 0$ on $\p S_2$, we choose 
a suitable $C(\alpha, n,\lambda,\Lambda)$ so that 
the subsolution of the form
$$\tilde w = C(\alpha, n, \lambda,\lambda) (\tilde V^{-m}-  2 ^{-m})$$
satisfies $\tilde w \leq 1$ on $\p S_{\alpha}$. Now, 
we obtain the desired universal lower bound for $v$ in $S_{1}$ from $v\geq \tilde w$ on $S_1\backslash S_{\alpha}$ and $v\geq 1$ on $S_{\alpha}$.
 \end{proof}

\begin{proof}[Proof of Lemma \ref{double_lem}] As in the proof of Lemma \ref{meas_lem}, we can assume that $t_0=1$. 
Denote for simplicity
 $S_t=S_u(0, t) ~\text{for } t>0.$ Then $B_1(0)\subset S_u(0, 4)\subset B_n(0).$
Subtracting an affine function from $u$, we can assume that $$u\geq 0, u(0)=0, Du(0)=0.$$ 

 We argue by contradiction. Assume that $v(x_0)=1$ at some point $x_0\in\overline{S_1}$ and that $v>M_2$ everywhere in $S_{\alpha}$ for some
large $M_2$ to be determined. 
Because $S_u(0, 4)$ is normalized, we can deduce from the Aleksandrov maximum principle, Theorem \ref{Alekmp} applied to $u-4$, that
$$\dist(S_u(0, 3), \p S_u(0, 4))\geq c_1(n,\lambda,\Lambda)$$ for some universal $c_1(n,\lambda,\Lambda)>0$. 
As in the proof of Lemma \ref{double_lem1}, for each
$\delta >0$, the set
$$H_{\delta} = \{x\in S_3\mid \|D^2 u\|\geq \frac{1}{\delta}\}$$
has measure bounded from above by
$|H_{\delta}|\leq C(n,\lambda,\Lambda) \delta.$
As in the proof of Lemma \ref{double_lem1}, we can construct a convex function $h_{\delta}\in C(\overline{S_4})\cap C^{2}(S_4)$ with the following properties:
$$\det D^2 h_{\delta}= (1+2\tL/\tl)^n\Lambda~\text{in~} H_{\delta},~h_\delta = 0~\text{on}~\p S_4,~h_\delta\leq 0~\text{in}~S_4,$$
and, if $\delta$ is small, depending only on $n,\lambda,\Lambda,\tl,\tL$ and $\alpha$, we have
 \begin{equation} \label{delchoice}|h_{\delta}|\leq C_1(n,\lambda,\Lambda,\tl,\tL)\delta^{1/n} \leq \frac{\alpha}{8}~\text{in~} S_4,~|D h_{\delta}|\leq 
 C_2(n,\lambda,\Lambda,\tl,\tL)\delta^{1/n}\leq \frac{\alpha}{8n}~\text{in~} S_3.
   \end{equation}
Choose small, positive numbers $\beta=\beta(n,\lambda,\Lambda,\alpha)$ and $\e$, and a large number $M_2$, depending only on $n,\lambda,\Lambda,\tl,\tL$ and $\alpha$, with the following properties:
\begin{myindentpar}{1cm}
 (i) If $y\in S_\beta$ then 
 $|Du(y)|\leq \frac{\alpha}{16 n}.$\\
 (ii) $0<\e<\min\{ \log_2 (\frac{4}{3(1+\alpha)}),\frac{\delta\alpha^2}{32(1+\tL/\tl)n^4}\}.$\\
 (iii) $M_2^{\e}\geq \frac{16}{9\alpha}.$
\end{myindentpar}
Note that (i) is possible due to Theorem \ref{sec-size} and the $C^{1,\alpha_\ast}$ estimate for $u$ in Theorem \ref{C1alpha}. Indeed, if $y\in S_\beta$ then
the estimate on the size of sections in Theorem \ref{sec-size} gives $|y|\leq C(n,\lambda,\Lambda) \beta^{\mu}$.  
By the $C^{1,\alpha_\ast}$ estimate for $u$, 
$$|Du(y)|\leq C|y|^{\alpha_\ast}\leq C|\beta|^{\mu\alpha_\ast}\leq \frac{\alpha}{16n}$$
if $\beta$ is small, depending only on $n,\lambda,\Lambda$ and $\alpha$.\\
{\it For each $y\in S_\beta$, we slide 
 the generalized paraboloids  (modified by $h_{\delta}$) $ -\frac{3}{4} [u(x) -D u(y)\cdot(x-y) - u(y)- h_\delta (x)] + C_y$ of opening $\frac{3}{4}$ until they touch
 the graph of $-[v + 1]^{-\e}$ from below at some point $x\in \overline S_3$.} This is equivalent to looking for the maximum point in $\overline S_3$ of the 
 function $Q_y$ below.
 
Consider
$w(x)= [v(x) + 1]^{-\e}$
and for $y\in S_\beta$,
 $$Q_y(x) = w(x) -  \frac{3}{4}[u(x) -D u(y)\cdot (x-y) - u(y)-h_{\delta}(x)]\equiv [v(x) + 1]^{-\e}-  \frac{3}{4}[u(x) -Du(y)\cdot(x-y) - u(y)-h_{\delta}(x)].$$
Let $x\in \overline{S_3} $ be a maximum point of $Q_y$ on $\overline{S_3}$. Then, we have the following claims.\\
{\bf Claim 1.} $x\in S_3 \setminus S_{\alpha}$. \\
{\bf Claim 2.} $ \e [v(x) + 1]^{-\e -1} \trace ((D^2 u)^{-1}D^2 v(x)) \geq n\tL/\tl.$\\
{\bf Claim 3.} 
$[v(x) +1]^{\e}\leq \alpha^{-1}.$\\
{\bf Claim 4.} $|b(x)| + |c^{-}(x)| + |f^{+}(x)|\geq c_1(n,\lambda,\Lambda,\alpha,\tl,\tL)$ for some positive, small constant $c_1(n,\lambda,\Lambda,\alpha,\tl,\tL)$.\\
{\bf Claim 5.}
$|\det D_x y|\leq C(n,\ld,\Ld,\tl,\tL,\alpha)[|b(x)|^n + |c^{-}(x)|^n + |f^{+}(x)|^n].$\\
Given these claims, we finish the proof of the lemma as follows.
Denote the contact set by $E$. Then, since $y\in S_{\beta}$, we have from Claim 5 and the area formula,
$$|S_{\beta}|\leq \int_E |\det D_x y|\leq C\left (\|b\|^n_{L^n(E)}+ \|c^{-}\|^n_{L^n(E)} + \|f^{+}\|^n_{L^n(E)} \right)\leq C(n,\ld,\Ld,\tl,\tL,\alpha)\e_2^n.$$
If $\e_2$ is small, depending only on $n,\lambda,\Lambda,\tl,\tL$ and $\alpha$, then $C(n,\ld,\Ld,\tl,\tL,\alpha)\e_2^n< |S_{\beta}|/2$ by the volume estimate of 
sections in Lemma \ref{vol-sec1}. Thus the above inequalities
yield a contradiction. The lemma is proved.\\
We now proceed with the proofs of the claims.\\
{\it Proof of Claim 1}.
Claim 1 follows from the following observations:
\begin{myindentpar}{1cm}
 (a) For all $x\in \p S_3$, we have $Q_y(x)< Q_y(x_0).$\\
 (b)  For all $x\in S_{\alpha}$, we have $Q_y(x)< Q_y(x_0).$
\end{myindentpar}
Let us consider (a) which is equivalent to 
\begin{equation}
\label{bdrS3}
 [v(x)+ 1]^{-\e}- [v(x_0) + 1]^{-\e}\leq \frac{3}{4} [u(x)- u(x_0) - Du(y)\cdot(x-x_0) - h_{\delta}(x) + h_\delta(x_0)]~\text{for all}~x\in\p S_3.
\end{equation}
Suppose $x\in\p S_3$. Then, since $x, x_0\in S_4 \subset B_n(0),$
$|x-x_0|\leq 2n.$ By (\ref{delchoice}) and (i), (\ref{bdrS3}) follows from 
$$u(x)- u(x_0) - Du(y)\cdot(x-x_0) - h_{\delta}(x) + h_\delta(x_0)\geq 2-|Du(y)||x-x_0|  -\frac{\alpha}{8}\geq 2-\frac{\alpha}{2}\geq \frac{3}{2}\geq \frac{4}{3}[v(x)+ 1]^{-\e}.$$
Let us now prove (b) by showing that for all $x\in S_{\alpha}$ the following inequality
holds
\begin{equation}
 \frac{3}{4}[u(x_0)- u(x)- Du(y)\cdot(x_0-x)- h_{\delta}(x_0) + h_\delta (x)]< [v(x_0)+ 1]^{-\e}- [v(x) + 1]^{-\e}.
 \label{alphahalf}
\end{equation}
Suppose $x\in S_{\alpha}$. We estimate the left hand side of (\ref{alphahalf}) from above, using (\ref{delchoice}) and (i), by
\begin{equation}\frac{3}{4}[u(x_0)- u(x)- Du(y)\cdot (x_0-x)- h_{\delta}(x_0) + h_\delta (x)] \leq \frac{3}{4}[1+  |Du(y)| 2n + \frac{\alpha}{8}]\leq \frac{3}{4}(1+\frac{\alpha}{4}).
\label{outalpha}
\end{equation}
From (ii) and (iii), we find that
$$2^{-\e}\geq \frac{3}{4} (1+\alpha)= \frac{3}{4}(1+\frac{\alpha}{4}) + \frac{9\alpha}{16} \geq \frac{3}{4}(1+\frac{\alpha}{4}) + M_2^{-\e} > (M_2 +1)^{-\e} + \frac{3}{4}(1+\frac{\alpha}{4}).$$
Thus, owing to (\ref{outalpha}) and $v(x_0)=1$, (\ref{alphahalf}) easily follows from the following estimates for all $x\in S_{\alpha}$:
$$\frac{3}{4}(1+\frac{\alpha}{4}) \leq 2^{-\e}- [M_2 + 1]^{-\e}\leq [v(x_0)+ 1]^{-\e}- [v(x) + 1]^{-\e}.$$
{\it Proof of Claim 2}.
At the maximum point $x\in S_3\setminus S_{\alpha}$ of $Q_y$, we have the following information:
\begin{myindentpar}{1cm}
 (c) $D w(x) = \frac{3}{4}[Du(x)- Du(y) -D h_{\delta}(x)],$\\
 (d) $D^2 w(x)\leq \frac{3}{4}[D^2 u(x)-D^2 h_{\de}(x)].$
\end{myindentpar}
Equation (c) gives
$Du(y) = Du(x)-D h_{\de}(x)-\frac{4}{3} Dw(x).$
Differentiating this expression with respect to $x$ yields
\begin{equation}D^2 u(y) D_x y = D^2 u(x)-D^2 h_{\de}(x)-\frac{4}{3}D^2 w(x).
 \label{Dxy}
\end{equation}
Moreover, by convexity, (i) and (\ref{delchoice})
\begin{equation}|Dw(x)|\geq \frac{3}{4}\left(\frac{u(x)}{|x|}- |Du(y)|-|D h_\de (x)|\right)\geq \frac{3}{4}\left(\frac{\alpha}{n}-\frac{\alpha}{16n}-\frac{\alpha}{8n}\right)\geq \frac{\alpha}{2n}.
 \label{Dwbelow}
\end{equation}
Furthermore, by the gradient estimate for $u$ in Lemma \ref{slope-est} and (\ref{delchoice}), we also have an upper bound for $Dw(x)$
\begin{equation}
|Dw(x)|\leq C(n,\lambda,\Lambda).
 \label{Dwabove}
\end{equation}
From $w(x)= [v(x) + 1]^{-\e}$, we find
\begin{equation}Dw(x) = -\e [v(x) + 1]^{-\e-1} Dv(x)
 \label{Dwu}
\end{equation}
and, with $a\otimes b$ denoting the matrix $\left(a_i b_j\right)_{1\leq i, j\leq n}$ for $a= (a_1,\cdots, a_n)\in \R^n$ and $b= (b_1,\cdots, b_n)\in \R^n$, 
\begin{equation}D^2 w(x)= \e(\e+ 1) [v(x) + 1]^{-\e-2} Dv(x)\otimes Dv(x) -\e[v(x) +1]^{-\e-1} D^2 v(x).
 \label{D2wu}
\end{equation}
It follows from inequality (d) that
$$u^{ij}(-w_{ij})\geq \frac{3}{4}\trace ((D^2 u)^{-1}D^2 h_{\de}(x)) + \frac{3}{4}u^{ij}(-u_{ij})= \frac{3}{4}\trace ((D^2 u)^{-1}D^2 h_{\de}(x)) -\frac{3}{4}n.$$
Combining this estimate with (\ref{D2wu}), we obtain
\begin{multline}\e [v(x) + 1]^{-\e-1} \trace ((D^2 u)^{-1}D^2 v(x))\\  \geq \e (\e+1) [v(x) + 1]^{-\e-2}u^{ij}v_i v_j + \frac{3}{4}\trace ((D^2 u)^{-1}D^2 h_{\de}(x))-\frac{3}{4}n.
\label{traceD2uv}
 \end{multline}
There are two cases:\\
{\bf Case 1:} $x\in H_{\delta}$. In this case, by Lemma \ref{LA_lem} (a)
$$\trace ((D^2 u)^{-1}D^2 h_{\de}(x))-n \geq n [\det (D^2 u(x))^{-1} \det D^2 h_{\delta}(x)]^{1/n}-n \geq (1+2\tL/\tl)n-n=2n\tL/\tl$$
and Claim 2 easily follows from this estimate and (\ref{traceD2uv}).\\
{\bf Case 2:} $x\in (S_3\setminus S_{\alpha})\setminus H_{\delta}$. Then $\|D^2 u(x)\|\leq \frac{1}{\de}$ and hence $\trace (D^2 u(x))\leq n\delta^{-1}$. Using Lemma \ref{LA_lem} (c), (\ref{Dwu}) and (\ref{Dwbelow}), we find
\begin{eqnarray*}
 \e (\e+1) [v(x) + 1]^{-\e-2}u^{ij}v_i v_j-\frac{3}{4}n &\geq&  \e (\e+1) [v(x) + 1]^{-\e-2} \frac{|Dv(x)|^2}{\trace (D^2 u(x))} -n\\
 &\geq&\frac{\de}{n} \e (\e+1) [v(x) + 1]^{-\e-2}|Dv|^2 -n
 \\ &= & \frac{\de (\e+1)}{n\e} [v(x) +1]^{\e} |Dw(x)|^2-n\\
 &\geq & \frac{\de }{n\e} |Dw(x)|^2-n \geq \frac{\de}{n\e}\frac{\alpha^2}{4 n^2}-n\geq n\tL/\tl.
\end{eqnarray*}
The last inequality follows from (ii). In this case, by recalling (\ref{traceD2uv}), we see that Claim 2 also follows.
In all cases, we have Claim 2.\\
{\it Proof of Claim 3}.
By the maximality of $Q_y(x)$, we have $Q_y(x)\geq Q_y(x_0)$. Therefore, recalling $v(x_0)=1$, we have
\begin{eqnarray}[v(x) + 1]^{-\e}&\geq& [v(x_0) + 1]^{-\e}-\frac{3}{4}[u(x_0) - u(x) + Du(y) \cdot(x-x_0) -h_\de (x_0) + h_\de (x)]
\nonumber\\ &\geq & 2^{-\e} -\frac{3}{4}[u(x_0) - u(x) + Du(y)\cdot (x-x_0) - h_\de (x_0)].
\label{vupper}
\end{eqnarray}
Recalling (i), (\ref{delchoice}), $x_0\in S_1$ and Claim 1, we can estimate
$$u(x_0) - u(x) + Du(y) \cdot(x-x_0) - h_\de (x_0) \leq 1-\alpha + |Du(y)|2n + \frac{\alpha}{8}\leq 1-\frac{\alpha}{2}.$$
Then, invoking (\ref{vupper}) and (ii), we find that Claim 3 follows from the estimates
$$[v(x) + 1]^{-\e} \geq 2^{-\e} - \frac{3}{4}(1-\frac{\alpha}{2}) \geq \frac{3}{4} (1+\alpha)-\frac{3}{4} (1-\frac{\alpha}{2}) \geq \alpha.$$
{\it Proof of Claim 4}. From Claim 3, we deduce that $v(x)$ is bounded from above
\begin{equation}v(x)\leq C(n,\lambda,\Lambda,\tl,\tL,\alpha).
\label{upvx}
\end{equation}
Then, by (\ref{Dwu}), $|Dv(x)|$ is also bounded from above, because
\begin{equation}|Dv(x)|= \e^{-1} [v(x) +1]^{\e+1} |Dw(x)|\leq C(n,\lambda,\Lambda,\tl,\tL,\alpha).
\label{upDvx}
\end{equation}
By (d) and (\ref{D2wu}), we have
$$\frac{3}{4}D^2 u (x) + \e [v(x)+1]^{-\e-1} D^2 v(x)\geq 0.$$
From (\ref{unilamU}), we obtain at $x$
\begin{eqnarray*}a^{ij} (\frac{3}{4} u_{ij} (x) + \e [v(x)+1]^{-\e-1} v_{ij}(x))&\geq& \tl U^{ij}(\frac{3}{4} u_{ij} (x) + \e [v(x)+1]^{-\e-1} v_{ij}(x)) \\ &=& \tl \det D^2 u(x)\left[ \frac{3}{4}n + 
\e [v(x)+1]^{-\e-1}\trace ((D^2 u)^{-1}(x) D^2 v(x))\right]. 
\end{eqnarray*}
Moreover,
$$\frac{3}{4}a^{ij} (x) u_{ij}(x)\leq \frac{3}{4}\tL U^{ij}(x) u_{ij}(x)= \frac{3}{4} \tL n\det D^2 u(x).$$
Hence , from (\ref{pinch1}), we have
$$\e [v(x)+1]^{-\e-1}\trace ((D^2 u)^{-1}(x) D^2 v(x)) \leq \tl^{-1} \lambda^{-1}\e [v(x)+1]^{-\e-1} \trace (AD^2 v) +  \frac{3}{4}n[\tL/\tl-1].$$
It follows from Claim 2 that
\begin{eqnarray*}n\tL/\tl &\leq&  \e [v(x) +1]^{-\e -1}\trace ((D^2 u)^{-1}(x) D^2 v(x))
\\ & \leq& \tl^{-1} \lambda^{-1}\e [v(x)+1]^{-\e-1} \trace (AD^2 v) +  n[\tL/\tl-1].
 \end{eqnarray*}
Therefore,
$$n\tl \lambda \e^{-1} [v(x) +1]^{\e+1} \leq \trace (AD^2 v).$$
Now using the equation (\ref{doub_eq}) only at $x$, that is
\begin{equation}\trace(AD^2 v)\leq f-b\cdot Dv -cv\leq f^{+}(x) + |b(x)||Dv(x)| + c^{-}(x) v(x),
\label{double_eq2}
\end{equation}
we obtain
$$n\tl\lambda \e^{-1}\leq f^{+}(x) + |b(x)||Dv(x)| + c^{-}(x) v(x).$$
Therefore, Claim 4 follows from the upper bounds of $v(x)$ and $|Dv(x)|$ in (\ref{upvx}) and (\ref{upDvx}). \\
{\it Proof of Claim 5}.
From (d) and (\ref{unilamU}), we can use $-D^2 w\geq -\frac{3}{4} D^2 u$ at $x$ together with the estimate
$$\tilde\lambda U^{ij} (-w_{ij} +  \frac{3}{4}u_{ij})\leq a^{ij}(-w_{ij} +  \frac{3}{4}u_{ij}) \leq a^{ij}(-w_{ij}) +\tL \frac{3}{4} U^{ij} u_{ij} = a^{ij}(-w_{ij}) + \frac{3}{4}\tL n \det D^2 u(x)$$
to obtain
$$\frac{3}{4}n\tl + \tl \trace((D^2 u)^{-1}(-D^2 w)) \leq (\det D^2 u(x))^{-1}a^{ij} (-w_{ij}) +  \frac{3}{4}n\tL.$$
Recalling (\ref{D2wu}), we find
$-D^2 w(x)\leq \e [v(x) +1]^{-\e-1} D^2 v(x),$
and hence,
$$\frac{3}{4}n\tl + \tl \trace((D^2 u)^{-1}(-D^2 w)) \leq (\det D^2 u(x))^{-1} \e [v(x) +1]^{-\e-1} \trace(AD^2 v) +  \frac{3}{4}n\tL.$$
Since $\det D^2 u(x)\geq\lambda$ by (\ref{pinch1}), it follows that
\begin{eqnarray*}\trace((D^2 u(x))^{-1}(-D^2 w(x))) \leq  \tl^{-1}\lambda^{-1} \e [v(x) +1]^{-\e-1} \trace(AD^2 v) + n [\tL/\tl -1].
\end{eqnarray*}
Now using the equation (\ref{doub_eq}) only at $x$, or (\ref{double_eq2}), we find that
\begin{multline*}\trace((D^2 u(x))^{-1}(-D^2 w(x)))  \leq 
\tl^{-1} \e [v(x) + 1]^{-\e-1} \lambda^{-1} [ |b(x)| |Dv(x)| + c^{-}(x) |v(x)| + f^{+}(x)] + n [\tL/\tl -1].
\end{multline*}
By Lemma \ref{LA_lem}(b), we deduce that
\begin{eqnarray*}-D^2 w(x) &\leq& \left( n \tL/\tl+ \tl^{-1}\e [v(x) + 1]^{-\e-1} \lambda^{-1} [ |b(x)| |Dv(x)| + c^{-}(x) v(x) + f^{+}(x)]\right)D^2 u(x)
 \\ &\leq & C(n,\ld,\Ld,\tl,\tL,\alpha) [ |b(x)| + c^{-}(x) + f^{+}(x)] D^2 u(x),
\end{eqnarray*}
where we used Claim 4 in the last inequality. 
Recalling (\ref{Dxy}), we get
$$D^2 u(y) D_x y \leq C(n,\ld,\Ld,\tl,\tL,\alpha)[|b(x)| + c^{-}(x) + f^{+}(x)] D^2 u(x).$$
Taking the determinant of both sides, and using (\ref{pinch1}), we obtain Claim 5. 
\end{proof}

\begin{rem}
\label{largeD_rem}
 From (\ref{Dwbelow}) and (\ref{Dwu}), we find
 $$|Dv(x)|= \e^{-1}[v(x) +1]^{\e+1} |Dw(x)|\geq \frac{\e^{-1}\alpha}{2n}.$$
 Thus the above argument also applies to functions that satisfy the equations  (\ref{doub_eq}) only at points  where the gradient is large.
\end{rem}
We record the above remark in the following lemma.
\begin{lem} [Doubling estimate for degenerate and singular elliptic equations with unbounded drift that hold only where the gradient is large.]
\label{large_lem} Assume that (\ref{pinch1}) and (\ref{unilamU}) are satisfied in $\Omega$. Let $\alpha\in (0, 1/8)$.
There are
small constants $\delta_2, \e_2$ depending only on $n,\lambda,\Lambda,\tl,\tL$ and $\alpha$ with the following properties.
Suppose that $v\geq 0$ is a $W^{2, n}_{\loc}(\Omega)$ solution of 
$a^{ij}v_{ij} + b\cdot Dv + cv
\leq f
 $
in a normalized section $S_u(0, 4t_0)\subset\subset\Omega$, but only at points where the gradient of $v$ is large, that is $|Dv|\geq \delta_2$.  
If
$$\|b\|_{L^n(S_u(0, 4t_0))}+ \|c^{-}\|_{L^n(S_u(0, 4t_0))} + \|f^{+}\|_{L^n(S_u(0, 4t_0))}\leq \e_2$$
 and if $v\leq 1$ at some point in $\overline {S_u(0, t_0)}$ then $v\leq M_2(n, \Lambda, \lambda,\tl,\tL, \alpha)$ in $S_u(0, \alpha t_0).$
\end{lem}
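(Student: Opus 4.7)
The plan is to essentially recycle the proof of Lemma \ref{double_lem} verbatim, exploiting the key observation recorded in Remark \ref{largeD_rem}: at every contact point $x$ produced by the sliding paraboloid argument, the gradient $|Dv(x)|$ is automatically bounded below by a universal constant. Since the equation is invoked \emph{only} at these contact points in the proof of Lemma \ref{double_lem}, the full equation is never actually needed in a neighborhood of any point of low gradient.

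More concretely, I would first repeat the setup of the proof of Lemma \ref{double_lem}: normalize so that $t_0=1$, subtract an affine function to arrange $u \geq 0$, $u(0)=0$, $Du(0)=0$, construct the modifier $h_\de$ on $S_u(0,4)$ with the properties in (\ref{delchoice}), fix $\beta$, $\e$, $M_2$ according to (i)--(iii), and for each $y \in S_\beta$ slide the generalized paraboloid (modified by $h_\de$) from below against $w=-[v+1]^{-\e}$ to produce a contact point $x \in \overline{S_3}$. Claims 1, 2, 3 go through unchanged because their proofs use only the maximality of $Q_y(x)$ together with convexity and the $C^{1,\alpha_\ast}$ control on $u$ — nothing depends on the equation for $v$. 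From the first-order condition (c) at $x$, namely $Dw(x) = \tfrac{3}{4}[Du(x) - Du(y) - Dh_\de(x)]$, the bound (\ref{Dwbelow}) gives $|Dw(x)| \geq \alpha/(2n)$, and then the identity $Dv(x) = -\e^{-1}[v(x)+1]^{\e+1} Dw(x)$ yields
\[
|Dv(x)| \geq \frac{\e^{-1}\alpha}{2n}.
\]
Choose $\de_2 := \e^{-1}\alpha/(2n)$, where $\e$ is the constant fixed in (ii) of the proof of Lemma \ref{double_lem}; this constant depends only on $n, \lambda, \Lambda, \tl, \tL, \alpha$. Then at every contact point $x$ we have $|Dv(x)| \geq \de_2$, so the hypothesis of the lemma guarantees that $a^{ij} v_{ij} + b \cdot Dv + cv \leq f$ actually holds at $x$.

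With the equation now available at each contact point, Claims 4 and 5 go through exactly as before, since their derivations only substitute the pointwise inequality (\ref{double_eq2}) at $x$. Finally, the area-formula step bounding $|S_\beta|$ by $C\,\e_2^n$ and the choice of $\e_2$ small enough to contradict the volume estimate in Lemma \ref{vol-sec1} remain valid without change. The main (and essentially only) conceptual obstacle is to verify, as above, that the equation for $v$ is genuinely used nowhere in the proof of Lemma \ref{double_lem} except pointwise at contact points — this is where I would take the most care, but a direct reading of that proof confirms it. The constants $\de_2$ and $\e_2$ then depend only on $n, \lambda, \Lambda, \tl, \tL, \alpha$, as claimed.
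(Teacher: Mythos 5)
Your proposal is correct and is exactly the paper's argument: the paper proves Lemma \ref{large_lem} via Remark \ref{largeD_rem}, which records precisely the lower bound $|Dv(x)|=\e^{-1}[v(x)+1]^{\e+1}|Dw(x)|\geq \e^{-1}\alpha/(2n)$ at contact points and observes that the equation is invoked only there (in Claims 4 and 5) in the proof of Lemma \ref{double_lem}. Your choice $\delta_2=\e^{-1}\alpha/(2n)$ and the verification that Claims 1--3 use no PDE information are exactly what is needed.
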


\section{Rescaling and Power decay estimate}
\label{decay_sec}
In this section, we discuss rescaling of the linearized Monge-Amp\`ere equation (\ref{LMAeq}) using John's lemma
and prove a power decay estimate for supersolution of (\ref{LMAeq}).
\subsection{Critical density estimate}
 Combining Lemmas \ref{meas_lem} and \ref{double_lem}, we obtain the 
 following result: 
\begin{prop}[Critical density estimate with $L^n$ drift]
 \label{decay_rem} Assume that (\ref{pinch1}) and (\ref{unilamU}) are satisfied in $\Omega$. 
 Suppose that $v\geq 0$ is a $W^{2,n}_{\loc}(\Omega)$ solution of $$a^{ij}v_{ij} + b \cdot Dv + cv\leq f$$  in a normalized section $S_u(0, 4t_0)\subset\subset\Omega$. 
 There are small, universal constants $\delta>0,\e_3>0$ and a large constant $M>1$ with the following properties. If 
 $$\|b\|_{L^n(S_u(0, 4t_0))}+ \|c^{-}\|_{L^n(S_u(0, 4t_0))} + \|f^{+}\|_{L^n(S_u(0, 4t_0))}\leq \e_3$$
 and for some nonnegative integer $k$
 \begin{equation}|\{v>M^{k+1}\}\cap S_u(0, t_0)|> (1-\delta) |S_u(0, t_0)|
  \label{Mkeq}
 \end{equation}
then $v>M^k$ in $S_u(0, t_0)$.
\end{prop}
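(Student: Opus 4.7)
The plan is to argue by contradiction, combining the doubling estimate (Lemma \ref{double_lem}) and the measure estimate (Lemma \ref{meas_lem}) after two successive rescalings that reduce $v$ to a function with size at most $1$ on a suitably small subsection.

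Suppose, for contradiction, that (\ref{Mkeq}) holds yet there exists $x_{\ast}\in S_u(0,t_0)$ with $v(x_{\ast})\leq M^k$. Set $\bar v := v/M^k$. Then $\bar v$ satisfies $a^{ij}\bar v_{ij}+b\cdot D\bar v + c\bar v\leq f/M^k$ in $S_u(0,4t_0)$, and $\bar v(x_{\ast})\leq 1$ with $x_{\ast}\in \overline{S_u(0,t_0)}$. Choose $\alpha := \alpha_1$, the universal constant appearing in Lemma \ref{meas_lem}, and let $M_2 = M_2(n,\lambda,\Lambda,\tl,\tL,\alpha_1)$ be the constant produced by Lemma \ref{double_lem} for this $\alpha$. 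If $\e_3 \leq \e_2$ then the smallness hypothesis of the doubling estimate is satisfied for $\bar v$ (the norms of $b$ and $c^{-}$ are unchanged, while $\|f/M^k\|_{L^n}\leq \e_3$), and Lemma \ref{double_lem} gives $\bar v \leq M_2$ throughout $S_u(0,\alpha_1 t_0)$.

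Now define the second rescaling $\tilde v := \bar v/M_2 = v/(M_2 M^k)$, which satisfies an inequality of the same form with right-hand side $f/(M_2M^k)$. In particular $\inf_{S_u(0,\alpha_1 t_0)}\tilde v\leq 1$, so provided $\e_3\leq \min(\e_1,\e_2)$ the hypotheses of Lemma \ref{meas_lem} are in force for $\tilde v$. Applying the measure estimate on the normalized section $S_u(0,4t_0)$ yields
$$|\{\tilde v > M_1\}\cap S_u(0,t_0)|\leq (1-\delta_1)|S_u(0,t_0)|,$$
which rewritten in the original scale reads
$$|\{v > M_1 M_2 M^k\}\cap S_u(0,t_0)|\leq (1-\delta_1)|S_u(0,t_0)|.$$
Setting $M := M_1 M_2$ and $\delta := \delta_1$, this contradicts (\ref{Mkeq}), completing the proof.

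There is no single hard step: the argument is essentially bookkeeping once Lemmas \ref{meas_lem} and \ref{double_lem} are available. The only subtlety worth flagging is the parameter coordination, namely using the free height $\alpha$ in the doubling estimate equal to the fixed universal constant $\alpha_1$ of the measure estimate, so that the doubling upper bound on $S_u(0,\alpha_1 t_0)$ feeds directly into the infimum hypothesis of the measure estimate, and choosing $\e_3$ small enough that every rescaling keeps the right-hand side's $L^n$-norm below both thresholds $\e_1$ and $\e_2$.
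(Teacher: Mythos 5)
Your proof is correct and is essentially the paper's own argument: both rest on coordinating $\alpha=\alpha_1$, $M=M_1M_2$, $\delta=\delta_1$, $\e_3=\min(\e_1,\e_2)$ and chaining Lemma \ref{double_lem} with Lemma \ref{meas_lem} after dividing by powers of $M$. The only cosmetic differences are that you run the argument by contradiction (doubling first, then measure estimate) while the paper runs the equivalent contrapositive chain (measure estimate first, then doubling), and that you treat general $k$ directly rather than reducing to $k=0$.
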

\begin{proof} Let $\alpha_1, \delta_1, \e_1, M_1$ be as in Lemma \ref{meas_lem}. We choose $\alpha$ in Lemma \ref{double_lem} to be $\alpha_1$ and let $\e_2, M_2$ be the
universal constants obtained from this lemma.
 Let $$\delta:=\delta_1;~ M:= M_1 M_2, ~\text{and} ~\e_3:=\min\{\e_1,\e_2\}.$$
 We first prove the proposition for $k=0$.
 Since $M_2>1$, the function $\tilde v:= v/M_2$ is a 
 $W^{2,n}_{\loc}(\Omega)$ solution of $$a^{ij}\tilde v_{ij} + b \cdot D\tilde v + c\tilde v\leq\tilde f$$  in a normalized section $S_u(0, 4t_0)\subset\subset\Omega$
 where $\tilde f:=f/M_2$ and 
 $$\|b\|_{L^n(S_u(0, 4t_0))}+ \|c^{-}\|_{L^n(S_u(0, 4t_0))} + \|\tilde f^{+}\|_{L^n(S_u(0, 4t_0))}\leq \e_3 \leq \e_1.$$
 Since (\ref{Mkeq}) holds for $k=0$, 
  $$|\{\tilde v>M_1\}\cap S_u(0, t_0)|> (1-\delta) |S_u(0, t_0)|=(1-\delta_1) |S_u(0, t_0)|,$$
  we conclude from Lemma \ref{meas_lem} that $\inf_{S_u(0, 
  \alpha_1 t_0)} \tilde v >1,$ so $v>M_2$ in $S_u(0,\alpha_1 t_0)$. Applying Lemma \ref{double_lem} to $v$, we obtain that $v>1$ in $S_u(0, t_0)$. Thus, the conclusion of the
  proposition holds for $k=0$.
Now, for $k\geq 1$, we apply the result in the case $k=0$ to $v/M^{k}$ to get the desired lower bound $v>M^k$ for $v$ on $S_u(0, t_0)$.
 \end{proof}

\subsection{Rescaling the linearized Monge-Amp\`ere equation } 
\label{AIP_sec}
Assume that (\ref{pinch1}) and (\ref{unilamU}) are satisfied in $\Omega$. 
In this section, we record how the equation
\begin{equation}a^{ij} v_{ij} + b\cdot Dv + c v\leq f
 \label{eqSh}
\end{equation}
changes with respect to normalization of a section $S_u(x_0, h)$ of $u$.
By subtracting $u(x_0) + Du(x_0)\cdot (x-x_0)+ h$ from $u$, we may assume 
that $u\mid_{\p S_u(x_0, h)} =0$ and $u$ achieves its minimum $-h$ at $x_0$. By John's lemma, we can 
find an affine transformation $Tx =A_h x+ b_h$ such that
\begin{equation}B_1 (0)\subset T^{-1} (S_u(x_0, h))\subset B_n(0).
 \label{normSh}
\end{equation}
Let 
$$\tilde u(x) =  (\det A_h)^{-2/n} u(Tx),~\tilde v(x)= v(Tx).$$
Then from (\ref{pinch1}), we have
\begin{equation}\lambda \leq \det D^2 \tilde u \leq \Lambda~\text{in}~T^{-1} (S_u(x_0, h)),
 \label{detD2tilde}
\end{equation}
$\tilde u =0$ on $\p T^{-1}(S_u(x_0, h))$ and
$$B_1(0)\subset \tilde S:=T^{-1}(S_u(x_0, h)) = S_{\tilde u}(y, (\det A_h)^{-2/n} h)\subset B_n(0)$$
where $y$ is the minimum point of $\tilde u$ in $T^{-1} (S_u(x_0, h))$. \\
The equation (\ref{eqSh}) 
becomes
\begin{eqnarray}
 \tilde a^{ij}\tilde v_{ij} + \tilde b\cdot D\tilde v  +  \tilde c \tilde v \leq \tilde f(x)
 \label{eqSh1}
\end{eqnarray}
where the coefficient matrix $\tilde A= \left(\tilde a^{ij}\right)_{1\leq i, j\leq n}$ is given by
\begin{equation}
 \label{tildeAA}
 \tilde A= (\det A_h)^{2/n} A_h^{-1} A (A_h^{-1})^{t}.
\end{equation}
and the lower order terms $\tilde b, \tilde c$ and $\tilde f$ are given respectively by
\begin{equation}\tilde b(x) =(\det A_h)^{2/n} A_h^{-1}b(Tx),~ \tilde c(x) =(\det A_h)^{2/n} c(Tx),~\tilde f(x) =(\det A_h)^{2/n} f(Tx).
 \label{bcf}
\end{equation}
For completeness, we include the standard computation leading to (\ref{eqSh1}). We have
$$D \tilde u = (\det A_h)^{-2/n} A_h^{t} Du;~D^2 \tilde u = (\det A_h)^{-2/n} A_h^{t}D^2 u A_h;~D \tilde v = A_h^{t} Dv,~D^2 \tilde v = A_h^{t}D^2 v A_h.$$
The cofactor matrix $\tilde U= (\tilde U^{ij})_{1\leq i, j\leq n}$ of $D^2 \tilde u$ is related to $U$ and $A_h$ by
\begin{eqnarray}\tilde U = (\det D^2 \tilde u)(D^2 \tilde u)^{-1}=(\det D^2 u) (\det A_h)^{2/n} A_h^{-1} (D^2 u)^{-1} (A_h^{-1})^{t}=
(\det A_h)^{2/n} A_h^{-1} U (A_h^{-1})^{t}. 
\label{tildeU}
\end{eqnarray}
Therefore, by (\ref{tildeAA}), $$\tilde a^{ij} \tilde v_{ij} (x)= \trace (\tilde A D^2\tilde v)= (\det A_h)^{2/n} \trace (AD^2 v(Tx))= (\det A_h)^{2/n}  a^{ij} v_{ij}(Tx)$$
and hence, recalling (\ref{eqSh}),
\begin{eqnarray*}
 \tilde a^{ij} \tilde v_{ij} (x) = (\det A_h)^{2/n}[f(Tx)- c(Tx) v(Tx)- b(Tx)\cdot Dv(Tx)]
 = \tilde f(x) - \tilde c \tilde v -\tilde b\cdot D\tilde v
\end{eqnarray*}
where $\tilde b, \tilde c, \tilde f$ are defined by (\ref{bcf}). Thus, we get (\ref{eqSh1}) as asserted.\\
We remark from (\ref{detD2tilde}), (\ref{tildeAA}) and (\ref{tildeU}) that $\tilde u$ and $\tilde A$ also satiisfy the structural conditions (\ref{pinch1}) and (\ref{unilamU}). This is
the affine invariance property of (\ref{LMAeq}) under rescaling.\\
Using the volume estimates in Lemma \ref{vol-sec1}, we find from (\ref{normSh}) that 
\begin{equation}[C(n,\lambda,\Lambda)]^{-1} h^{n/2}\leq \det A_h
\leq C(n,\lambda,\Lambda) h^{n/2}.
\label{detAh}
\end{equation}
The interior $C^{1,\alpha_\ast}$ estimate for $u$ in Theorem \ref{C1alpha} shows that 
 $$S_u(x_0, h)\supset B(x_0, c_1h^{\frac{1}{1+\alpha_\ast}})$$ for some universal constant $c_1= c_1(n,\lambda,\Lambda)$. This combined with (\ref{normSh}) implies that 
 \begin{equation} \|A_h^{-1}\|\leq Ch^{-\frac{1}{1+\alpha_\ast}}= Ch^{-1 +\frac{\alpha_\ast}{1+\alpha_\ast}}.
  \label{Anorm}
 \end{equation}
By (\ref{bcf}) and (\ref{Anorm}), we can estimate for all $p\geq 1$ 
\begin{eqnarray}\|\tilde b\|_{L^p(\tilde S)}\leq C h \| A_h^{-1}b(Tx)\|_{L^p(\tilde S)}\leq
Ch^{1-\frac{n}{2p}}\|A_h^{-1}\|\|b\|_{L^p(S_u(x_0, h))} \leq C h^{\frac{\alpha_\ast}{1+\alpha_\ast}-\frac{n}{2p}}\|b\|_{L^p(S_u(x_0, h))}.
\label{tbLp}
\end{eqnarray}
Suppose from now on $p\geq n$.  Then, by H\"older inequality, 
\begin{equation} \|\tilde b\|_{L^n(\tilde S)}\leq |\tilde S|^{\frac{1}{n}-\frac{1}{p}}\|\tilde b\|_{L^p(\tilde S)}\leq 
C h^{\frac{\alpha_\ast}{1+\alpha_\ast}-\frac{n}{2p}}\|b\|_{L^p(S_u(x_0, h))}.
\label{tbLn}
\end{equation}
Moreover, by (\ref{detAh}), we have
\begin{equation} \|\tilde c\|_{L^n(\tilde S)}=(\det A_h)^{2/n} h^{-1/2} \|c\|_{L^n(S_u(x_0, h))}\leq Ch^{1/2} \|c\|_{L^n(S_u(x_0, h))}
 \label{tcLn}
\end{equation}
and 
\begin{equation} \|\tilde f\|_{L^n(\tilde S)}=(\det A_h)^{2/n} h^{-1/2} \|f\|_{L^n(S_u(x_0, h))}\leq Ch^{1/2} \|f\|_{L^n(S_u(x_0, h))}.
 \label{tfLn}
\end{equation}
Suppose furthermore, $$\frac{\alpha_\ast}{1+\alpha_\ast}-\frac{n}{2p}> 0~ \text{or, equivalently}, ~p> \frac{n(1+\alpha_\ast)}{2\alpha_\ast}.$$
Then in our rescaling process, the $L^n$ norms of $b, c, f$ are under control if $h$ is bounded by a universal constant $c_\ast$ to be chosen as follows. By the volume
estimates of sections in Lemma \ref{vol-sec1}, we can find $c_{\ast}(n,\lambda,\Lambda)$ such that
$$ \text{if}~S_u(y, h)\subset B_n(x_0) ~\text{then } h\leq c_\ast.$$
We summarize the above discussion in the following lemma.
\begin{lem} [Critical density estimate with $L^p$ drift]
\label{res_lem}
 Assume that (\ref{pinch1}) and (\ref{unilamU}) are satisfied in $\Omega$. Let $p>\frac{n(1+\alpha_\ast)}{2\alpha_\ast}$.
There is a small number $\e_4$ depending only on $p,n,\lambda,\Lambda,\tl$ and $\tL$ with the following property.
 Suppose that $v\geq 0$ is a $W^{2, n}_{\loc}(\Omega)$ solution of  $$a^{ij}v_{ij} + b\cdot Dv + cv\leq f$$  in a section $S_u(x_0, h)\subset\subset \Omega$ with $h\leq 4 c_\ast$ and 
 \begin{equation}\|b\|_{L^p(S_u(x_0, h))}+ \|c^{-}\|_{L^n(S_u(x_0, h))} + \|f^{+}\|_{L^n(S_u(x_0, h))}\leq \e_4.
  \label{bLp}
 \end{equation}
Let $M$ and $\delta$ be as in Proposition \ref{decay_rem}. If
 for some nonnegative integer $k$, we have
 \begin{equation}|\{v>M^{k+1}\}\cap S_u(x_0, h/4)|> (1-\delta) |S_u(x_0, h/4)|
  \label{Mkeq_res}
 \end{equation}
then $v>M^k$ in $S_u(x_0, h/4)$.
\end{lem}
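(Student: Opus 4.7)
The plan is to invoke the affine rescaling of Section~\ref{AIP_sec} to convert the hypotheses into the setting of Proposition~\ref{decay_rem}, which is stated on a normalized section. Apply John's lemma (Lemma~\ref{John_lem}) to the convex set $S_u(x_0, h)$, producing an affine map $Tx = A_h x + b_h$ with $T(y) = x_0$ for some $y$, such that $\tilde S := T^{-1}(S_u(x_0,h))$ satisfies $B_1(0) \subset \tilde S \subset B_n(0)$. Define $\tilde u(x) = (\det A_h)^{-2/n} u(Tx)$ and $\tilde v(x) = v(Tx)$, and let $\tilde a^{ij}, \tilde b, \tilde c, \tilde f$ be given by (\ref{tildeAA})–(\ref{bcf}). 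As already observed in Section~\ref{AIP_sec}, $\tilde u$ and the rescaled cofactor matrix $\tilde U$ still obey (\ref{pinch1}) and $\tilde A$ satisfies $\tl \tilde U \leq \tilde A \leq \tL \tilde U$, since the sandwich inequality (\ref{unilamU}) is preserved under conjugation by $A_h^{-1}$ (times a scalar). In particular, writing $\tilde t := (\det A_h)^{-2/n} h$, we have $\tilde S = S_{\tilde u}(y, \tilde t)$, and setting $t_0 := \tilde t/4$ makes $S_{\tilde u}(y, 4 t_0)$ a normalized section compactly contained in $T^{-1}(\Omega)$.

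Next, translate the superlevel-set hypothesis (\ref{Mkeq_res}) into the rescaled picture. Because $T$ is affine, $S_u(x_0, h/4) = T(S_{\tilde u}(y, t_0))$ and $T$ sends $\{\tilde v > M^{k+1}\} \cap S_{\tilde u}(y, t_0)$ bijectively to $\{v > M^{k+1}\} \cap S_u(x_0, h/4)$, with Jacobian $|\det A_h|$ cancelling on both sides of the ratio. Hence (\ref{Mkeq_res}) becomes
\[
|\{\tilde v > M^{k+1}\} \cap S_{\tilde u}(y, t_0)| > (1-\delta)\, |S_{\tilde u}(y, t_0)|,
\]
which is exactly hypothesis (\ref{Mkeq}) of Proposition~\ref{decay_rem} applied to the rescaled equation on the normalized section $S_{\tilde u}(y, 4t_0)$.

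It then remains to check the smallness of the rescaled lower-order coefficients. Invoking the estimates (\ref{tbLn}), (\ref{tcLn}), (\ref{tfLn}),
\[
\|\tilde b\|_{L^n(\tilde S)} \leq C\, h^{\frac{\alpha_\ast}{1+\alpha_\ast} - \frac{n}{2p}} \|b\|_{L^p(S_u(x_0,h))}, \qquad \|\tilde c\|_{L^n(\tilde S)} + \|\tilde f\|_{L^n(\tilde S)} \leq C h^{1/2}\bigl(\|c\|_{L^n(S_u(x_0,h))} + \|f\|_{L^n(S_u(x_0,h))}\bigr).
\]
The hypothesis $p > n(1+\alpha_\ast)/(2\alpha_\ast)$ ensures the exponent $\frac{\alpha_\ast}{1+\alpha_\ast} - \frac{n}{2p}$ is strictly positive, and the bound $h \leq 4c_\ast$ bounds both powers of $h$ above by a universal constant depending on $n,\lambda,\Lambda,\alpha_\ast, p$. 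Therefore by choosing $\e_4$ small (depending only on $p, n, \lambda, \Lambda, \tl, \tL$) we can guarantee
\[
\|\tilde b\|_{L^n(\tilde S)} + \|\tilde c^-\|_{L^n(\tilde S)} + \|\tilde f^+\|_{L^n(\tilde S)} \leq \e_3,
\]
where $\e_3$ is the universal constant from Proposition~\ref{decay_rem}.

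Finally apply Proposition~\ref{decay_rem} to $\tilde v$ on the normalized section $S_{\tilde u}(y, 4t_0)$ to conclude $\tilde v > M^k$ on $S_{\tilde u}(y, t_0)$, and then transfer back via $T$ to obtain $v > M^k$ on $S_u(x_0, h/4)$. The only delicate point in the whole argument is the bookkeeping of the rescaling: making sure the sandwich inequality (\ref{unilamU}) passes to $\tilde A, \tilde U$ under the affine change (which follows from the conjugation form (\ref{tildeAA}), (\ref{tildeU})), and that the powers of $h$ in the Jacobian $\det A_h$ (controlled by (\ref{detAh}) and (\ref{Anorm})) conspire with the $C^{1,\alpha_\ast}$ regularity to give the positive exponent in (\ref{tbLn}). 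This is the step where the integrability threshold $p > n(1+\alpha_\ast)/(2\alpha_\ast)$ is genuinely used.
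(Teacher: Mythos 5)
Your proposal is correct and follows essentially the same route as the paper: normalize $S_u(x_0,h)$ via John's lemma, use the rescaling computations of Section~\ref{AIP_sec} (in particular (\ref{tbLn})--(\ref{tfLn}) together with $h\leq 4c_\ast$ and the threshold $p>\frac{n(1+\alpha_\ast)}{2\alpha_\ast}$) to make the rescaled lower-order terms smaller than $\e_3$, observe that $T^{-1}$ preserves the density hypothesis since $T^{-1}(S_u(x_0,h/4))=S_{\tilde u}(y,t_0)$ with Jacobians cancelling, and then apply Proposition~\ref{decay_rem} and rescale back. The paper's proof is just a more compressed version of the same argument.
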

\begin{proof}[Proof of Lemma \ref{res_lem}]
 Rescaling the equation as above, we obtain (\ref{eqSh1}) on a normalized section $\tilde S$ of $\tilde u$, where $\tilde S:=T^{-1}(S_u(x, h))= S_{\tilde u}(y, 4t_0)$. If $\e_4$ is small,
 depending only on $p,n,\lambda,\Lambda,\tl$ and $\tL$ , then from (\ref{bLp}) and (\ref{tbLn})-(\ref{tfLn}), we find 
 $$\|\tilde b\|_{L^n(\tilde S)}+ \|\tilde c^{-}\|_{L^n(\tilde S)} + \|\tilde f^{+}\|_{L^n(\tilde S)}\leq \e_3.$$
 Note that
 $$T^{-1}(\{v>M^{k+1}\}\cap S_u(x_0, h/4)) = \{\tilde v> M^{K+1}\}\cap S_{\tilde u}(y, t_0); ~T^{-1}( S_u(x_0, h/4))= S_{\tilde u}(y, t_0).$$
 Thus, the assumption (\ref{Mkeq_res}) implies
$$|S_{\tilde u}(y, t_0) \cap \{\tilde v>M^{k+1}\}| > (1-\delta) |S_{\tilde u}(y, t_0)|.$$
Proposition \ref{decay_rem} applied to (\ref{eqSh1}) gives  $\tilde v>M^k$ on $S_{\tilde u}(y, t_0)$. Rescaling back, we have $v>M^k$ in $S_u(x_0, h/4)$. 
\end{proof}

\subsection{Power decay estimate}
From the critical density estimate and the covering lemma stated in Lemma \ref{inkspots}, we obtain the $L^{\e}$ estimate and completing the proof of the power decay estimate.
\begin{thm}[Decay estimate of the distribution function] \label{decay_thm} 
 Assume that (\ref{pinch1}) and (\ref{unilamU}) are satisfied in $\Omega$. Let $p>\frac{n(1+\alpha_\ast)}{2\alpha_\ast}$. Let $\hat K$ be as in Lemma \ref{KKlem}.
 Let $\e_4$ and $c_\ast$ be as in Lemma \ref{res_lem}.
 Suppose that $v\geq 0$ is a $W^{2,n}_{\loc}(\Omega)$ solution of $$a^{ij}v_{ij} + b \cdot Dv + cv\leq f$$  in a section $S_4=S_u(0, 4 t_0)\subset\subset\Omega$ with
 $S:= S_u(0, \hat K t_0)\subset\subset\Omega$, $t_0\leq c_*$ and 
 \begin{equation}\|b\|_{L^p(S)}+ \|c^{-}\|_{L^n(S)} + \|f^{+}\|_{L^n(S)}\leq \e_4.
  \label{bcf_small}
 \end{equation}
Suppose that
 $$\inf_{S_u(0, t_0)} v\leq 1.$$
Then there are universal constants $C_1(n,\lambda,\Lambda,\tl,\tL)>1$  and $\e(n,\lambda,\Lambda,\tl,\tL)\in (0, 1)$ such that
\[ |\{ v > t \} \cap S_u(0, t_0) | \leq C_1 t^{-\eps} |S_u(0, t_0)|~\text{for all~} t>0.\]
\end{thm}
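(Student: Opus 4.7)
The plan is to iterate the critical density estimate (Lemma \ref{res_lem}) through the growing ink-spots lemma (Lemma \ref{inkspots}) so as to obtain geometric decay of the super-level sets $A_k := \{v > M^k\} \cap S_u(0,t_0)$, and then translate that geometric decay into the claimed polynomial decay in $t$. Here $M$ and $\delta$ are the universal constants of Proposition \ref{decay_rem}. Using the affine invariance of the problem (Section \ref{AIP_sec}) I first normalize $S_u(0,\hat K t_0)$ by John's lemma so that $S_u(0,\hat K t_0) \subset B_n(0)$; every sub-section $S_u(x,t) \subset S_u(0,\hat K t_0)$ then automatically satisfies $t \leq c_\ast$ by the volume estimate of Lemma \ref{vol-sec1}.

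\emph{Base case.} Continuity of $v$ (from $W^{2,n} \hookrightarrow C^0$) together with $\inf_{S_u(0,t_0)} v \leq 1$ produces, for each $\eta > 0$, a point in $S_u(0,t_0)$ where $v \leq 1+\eta$. The hypotheses of Lemma \ref{res_lem} with $x_0 = 0$, $h = 4t_0$, $k = 0$ are satisfied because $S_u(0,4t_0) \subset S \subset\subset \Omega$, $4t_0 \leq 4c_\ast$, and (\ref{bcf_small}) transfers to $S_u(0,4t_0)$. Its contrapositive, applied with $1$ replaced by $1+\eta$ and then passed to the limit $\eta \to 0$ via continuity of Lebesgue measure from below, gives
\[ |A_1| \leq (1-\delta)\,|S_u(0,t_0)|. \]

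\emph{Iteration via ink-spots.} For each $k \geq 0$ I apply Lemma \ref{inkspots} with reference section $S_u(0,t_0)$, $E := A_{k+1}$ and $F := A_k$; both are open by continuity of $v$. Hypothesis (ii) is the base case when $k = 0$ and, for $k \geq 1$, follows inductively from $A_{k+1} \subset A_k$ together with the bound $|A_k| \leq (1-\delta)|S_u(0,t_0)|$ produced by the previous step. For hypothesis (i), suppose a section $S_u(x,t) \subset S_u(0,t_0)$ has $|S_u(x,t) \cap A_{k+1}| > (1-\delta)|S_u(x,t)|$. By Lemma \ref{KKlem}(ii),
\[ S_u(x, 4t) \subset S_u(x, Kt) \subset S_u(0,\hat K t_0) = S \subset\subset \Omega \quad (K = 2\theta_0^2 > 4), \]
so Lemma \ref{res_lem} is applicable with $x_0 = x$, $h = 4t$ (the height bound $4t \leq 4c_\ast$ and the smallness of $b,c,f$ being inherited from the previous line), yielding $v > M^k$ on $S_u(x,t)$, i.e.\ $S_u(x,t) \subset A_k$. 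The ink-spots conclusion is $|A_{k+1}| \leq (1-c_2\delta)|A_k|$, whence by induction $|A_k| \leq (1-\delta)(1-c_2\delta)^{k-1}|S_u(0,t_0)|$ for every $k \geq 1$.

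\emph{Conclusion and main obstacle.} Setting $\varepsilon := -\log(1-c_2\delta)/\log M > 0$, we have $(1-c_2\delta)^k = M^{-k\varepsilon}$; for $t \geq M$ we pick $k \geq 1$ with $M^k \leq t < M^{k+1}$ to get $|\{v > t\} \cap S_u(0,t_0)| \leq |A_k| \leq C_1 t^{-\varepsilon} |S_u(0,t_0)|$, and the estimate is trivial for $0 < t < M$ after enlarging $C_1$. The one genuinely delicate point is the uniform verification of ink-spots hypothesis (i): one must know, for \emph{every} sub-section $S_u(x,t) \subset S_u(0,t_0)$, that the enlargement $S_u(x,4t)$ simultaneously sits compactly in $\Omega$, has height $\leq 4c_\ast$, and still inherits the smallness of $b$, $c$, $f$ that was assumed only on $S$. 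The inclusion property Lemma \ref{KKlem}(ii) combined with the initial John's lemma normalization is precisely what makes this possible, and it is the reason $\hat K$ (rather than any smaller constant) appears in the hypothesis $S_u(0,\hat K t_0) \subset\subset \Omega$.
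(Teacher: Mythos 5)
Your proposal is correct and follows essentially the same route as the paper: the same super-level sets $A_k$, the base case and the verification of the ink-spots hypothesis (i) both via Lemma \ref{res_lem} (with Lemma \ref{KKlem}(ii) providing $S_u(x,4t)\subset S_u(0,\hat K t_0)$ so that the smallness of $b,c,f$ and the compact containment are inherited), the geometric decay $|A_{k+1}|\leq(1-c_2\delta)|A_k|$ from Lemma \ref{inkspots}, and the same exponent $\e=-\log(1-c_2\delta)/\log M$. Your extra John-normalization remark, making explicit why every sub-section satisfies the height bound needed for Lemma \ref{res_lem}, is a harmless (and slightly more careful) elaboration of a point the paper leaves implicit.
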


\begin{proof}[Proof of Theorem \ref{decay_thm}] 
Let $\delta\in (0, 1)$ and $M>1$ be the constants in Proposition \ref{decay_rem}. The conclusion of the theorem follows from the following decay estimate
for  $A_k := \{v > M^k\} \cap S_u(0, t_0)$:
\[ |A_k| \leq C_2 M^{-\eps k}|S_u(0, t_0)|. \]
Note that $A_k$'s are open sets and $A_k \subset A_1$ for all $k\geq 1$. Recalling $\inf_{S_u(0, t_0)} v \leq 1$, by Lemma \ref{res_lem},
we have 
$$|A_k|\leq |A_1| \leq (1-\delta)|S_u(0, t_0)|~\text{for all }k.$$ 
{\bf Claim.} If  a section $S_u(y, t) \subset S_u(0, t_0)$ satisfies 
\begin{equation}|S_u(y, t) \cap A_{k+1}| > (1-\delta) |S_u(y, t)|,
 \label{upper_den}
\end{equation}
 then $S_u(y, t) \subset A_{k}.$
 
Indeed, from $S_u(y, t)\subset S_u(0, t_0)$ and Lemma \ref{KKlem}, we have 
\begin{equation}S_u(y, 4t)\subset S_u(0, \hat K t_0)\subset\subset\Omega.
 \label{Krole}
\end{equation}
 Then (\ref{bcf_small}) gives
 $$\|b\|_{L^p(S_u(y, 4t))}+ \|c^{-}\|_{L^n(S_u(y, 4t))} + \|f^{+}\|_{L^n(S_u(y, 4t))}\leq \e_4.$$
Thus, all the hypotheses of Lemma 
\ref{res_lem} are satisfied on $S_u(y, 4t)$. Applying this lemma, we conclude from (\ref{upper_den}) that $v>M^k$ on $S_u(y, t)$. This proves the Claim.\\
Using the Claim and Lemma \ref{inkspots}, we obtain
$$ |A_{k+1}| \leq (1-c\delta) |A_k|, $$
and therefore, by induction, $$|A_k| \leq (1-c_2\delta)^{k-1} (1-\delta) |S_u(0, t_0)| = C_2 M^{-\eps k}|S_u(0, t_0)|,$$ where $\e = -\log(1-c_2\delta) / \log M$ and 
$C_2=(1-c_2\delta)^{-1}(1-\delta)$.
This finishes the proof.
\end{proof}
\begin{rem}
We need a universal constant $\hat K$ in the above lemma to guarantee
the validity of (\ref{Krole}). 
If $u(x)=\frac{1}{2}|x|^2$ then $S_u(y, t)= B_{\sqrt{2t}}(y)$ and (\ref{Krole}) holds when $\hat K=4$. Under (\ref{pinch1}) only, sections of $u$ can have degenerate 
geometry, namely, they can be long and thin in different directions. 
\end{rem}

\section{Harnack inequality}
\label{Harnack_sec}
In this section, we prove Theorem \ref{CGthm}. It follows from the following theorem and a covering argument.
\begin{thm} [Harnack inequality for section with small height]
\label{CGthm2}Assume that (\ref{pinch1}) and (\ref{unilamU}) are satisfied in $\Omega$. 
Suppose that $v\geq 0$ is a $W^{2, n}_{loc}(\Omega)$ solution of (\ref{LMAeq}) in 
a section $S:=S_u (x_0, h)\subset\subset \Omega$ 
where 
$f\in L^n (S), c\in L^n(S)$ and $b\in L^p(S)$ 
with $p>\frac{n(1+\alpha_\ast)}{2\alpha_\ast}$. 
There exists a universal constant $\e_5(n,p,\lambda,\Lambda,\tl,\tL)>0$ with the following property.  
If $h\leq h_0$ where
$$h_0^{\frac{\alpha_\ast}{1+\alpha_\ast}-\frac{n}{2p}}\|b\|_{L^p(S)}\leq \e_5,~\text{and}~h_0^{1/2}\|c\|_{ L^n (S)}\leq \e_5,$$
then
\begin{equation}\sup_{S_{u}(x_0, h/8)} v\leq C(n, \lambda, \Lambda,\tl,\tL) \left(\inf_{S_{u}(x_0, h/8)} v+ h^{1/2}\|f\|_{L^n(S)}\right).
 \label{HI3}
\end{equation}
\end{thm}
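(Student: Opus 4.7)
The plan is to rescale $S_u(x_0, h)$ to a normalized section, apply the power decay Theorem \ref{decay_thm} twice --- once to $\tilde v$ and once to a reflected function $K - \tilde v$ --- and pinch $\sup\tilde v$ from above and below by the two resulting level-set estimates.

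First, by John's lemma and the recipe of Section \ref{AIP_sec}, I transform $S_u(x_0, h)$ into a normalized section $\tilde S = S_{\tilde u}(0, 4 t_0)$ with $t_0$ universal. The rescaled data $\tilde a^{ij}, \tilde b, \tilde c, \tilde f$ satisfies (\ref{pinch1}) and (\ref{unilamU}), and by (\ref{tbLn})--(\ref{tfLn}) combined with $h \leq h_0$ and the smallness of $\e_5$, one has
\[
 \|\tilde b\|_{L^p(\tilde S)} + \|\tilde c\|_{L^n(\tilde S)} \leq \eta, \qquad \|\tilde f\|_{L^n(\tilde S)} \leq C h^{1/2}\|f\|_{L^n(S)},
\]
for a small universal $\eta$ to be fixed at the end. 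By linearity I divide $\tilde v$ by $\inf_{S_{\tilde u}(0, t_0/8)}\tilde v + C h^{1/2}\|f\|_{L^n(S)}$, reducing to proving $\sup_{S_{\tilde u}(0, t_0/2)}\tilde v \leq C^{*}$ under the normalization $\inf_{S_{\tilde u}(0, t_0/8)}\tilde v + \|\tilde f\|_{L^n(\tilde S)} \leq 1$.

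The heart of the argument consists of the two applications of Theorem \ref{decay_thm}. Applied to $\tilde v$ directly (a nonnegative supersolution with small-norm coefficients and $\inf\leq 1$ on $S_{\tilde u}(0, t_0)$), it yields
\[
 |\{\tilde v > t\} \cap S_{\tilde u}(0, t_0)| \leq C_1 t^{-\e} |S_{\tilde u}(0, t_0)|.
\]
For the reverse direction, set $K := 1 + \sup_{S_{\tilde u}(0, t_0)}\tilde v$ (finite by continuity of the $W^{2,n}_{\loc}$ solution) and define $\tilde w := K -\tilde v$, which satisfies $\tilde w \geq 1$ on $S_{\tilde u}(0, t_0)$ with $\inf \tilde w = 1$ attained at a maximizer of $\tilde v$, and
\[
 \tilde a^{ij}\tilde w_{ij} + \tilde b \cdot D\tilde w + \tilde c\tilde w = -\tilde f + \tilde c K,
\]
a supersolution with $L^n$-source norm bounded by $1 + K\eta$. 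Provided $1 + K\eta \leq \e_4$, Theorem \ref{decay_thm} applied to $\tilde w$ gives
\[
 |\{\tilde v < K - t\} \cap S_{\tilde u}(0, t_0)| \leq C_1'(1 + K\eta) t^{-\e}|S_{\tilde u}(0, t_0)|.
\]
Setting $t = K/2$ in both estimates and summing the two disjoint pieces of $S_{\tilde u}(0, t_0)$ produces $|S_{\tilde u}(0, t_0)| \leq C_2(1 + K\eta)(K/2)^{-\e}|S_{\tilde u}(0, t_0)|$, i.e.,
\[
 K^{\e} \leq C_3 (1 + K\eta).
\]
For $\eta$ chosen small universally, this forces $K \leq (2 C_3)^{1/\e}$ via a continuity/bootstrap argument: at $\eta = 0$ the homogeneous-equation Harnack of Caffarelli-Guti\'errez / Trudinger-Wang yields a universal bound on $K$, and continuous dependence on the lower order data rules out the large-$K$ branch of the quadratic-type inequality. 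Un-rescaling then produces (\ref{HI3}).

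The main obstacle is the self-referential appearance of $K$ in the source of the $\tilde w$-equation: applying Theorem \ref{decay_thm} to $\tilde w$ requires $K\eta$ to be controlled, yet $K$ is exactly the quantity we wish to bound. The hypothesis $h_0^{1/2}\|c\|_{L^n(S)}\leq \e_5$ with small $\e_5$ is engineered precisely so that $\eta$ is small enough for the bootstrap to close. A secondary technical point is that the enlarged section $S_{\tilde u}(0, \hat K t_0)$ demanded by Theorem \ref{decay_thm} must lie inside the rescaled domain; this is handled by applying the decay theorem to a suitable subsection and then, via the engulfing property and a finite covering, recovering the target section $S_u(x_0, h/8)$.
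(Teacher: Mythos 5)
Your first half (rescaling, reduction to a normalized section, the forward application of Theorem \ref{decay_thm} to $\tilde v$) matches the paper. The gap is in the reverse direction. To apply Theorem \ref{decay_thm} to $\tilde w = K - \tilde v$ you must have $\tilde w \geq 0$ on the \emph{entire} section where the equation is invoked, namely $S_{\tilde u}(0,4t_0)$ (and the theorem further requires $S_{\tilde u}(0,\hat K t_0)\subset\subset\Omega$); nonnegativity on the full quadruple section is essential because the measure and doubling lemmas underlying the decay estimate slide paraboloids under the graph of a function assumed nonnegative there (the doubling lemma even works with $[v+1]^{-\e}$). But your $K = 1+\sup_{S_{\tilde u}(0,t_0)}\tilde v$ controls $\tilde v$ only on the inner section, so $K-\tilde v$ may be negative on $S_{\tilde u}(0,4t_0)\setminus S_{\tilde u}(0,t_0)$. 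Replacing $K$ by $1+\sup_{S_{\tilde u}(0,4t_0)}\tilde v$ destroys the hypothesis $\inf_{S_{\tilde u}(0,t_0)}\tilde w\leq 1$ and makes the resulting bound circular (sup on the small section bounded by sup on the large one). This is exactly the classical obstruction to the ``two-sided $L^\e$'' shortcut, and it is why the paper's proof of the Claim instead follows Imbert--Silvestre: one touches $v$ from above by the barrier $h_t(x)=t(1-u(x))^{-\beta}$, which blows up at $\p S_u(0,1)$, and at a touching point $x_0$ one works in a tiny section $S_u(x_0,c_1 r^{p_1})$ on which $v$ \emph{is} bounded above by $(1-\rho)^{-\beta}H_0$ (so the reflected function $w=\frac{(1-\rho)^{-\beta}H_0-v}{((1-\rho)^{-\beta}-1)H_0}$ is genuinely nonnegative there and has $w(x_0)=1$); Lemma \ref{res_lem} then gives a positive-density lower bound for $\{v\geq H_0/2\}$ in that tiny section, which is played off against the global decay \eqref{up_H} to bound $t$. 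Some version of this localization is unavoidable; your argument as written does not supply it.

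Two further, more minor, points. The condition ``$1+K\eta\leq \e_4$'' can never hold since $\e_4<1$ is a fixed small universal constant; you would first have to divide $\tilde w$ by $(1+K\eta)/\e_4$, which inflates the decay constant by $((1+K\eta)/\e_4)^{\e}$ — this factor is in fact absorbable for small $\eta$ by a direct algebraic argument, so the ``continuity/bootstrap'' appeal to the Caffarelli--Guti\'errez Harnack inequality at $\eta=0$ is both unnecessary and uncomfortably close to circular (it invokes the theorem being proved in a special case as a black box). Neither of these repairs, however, addresses the sign obstruction above.
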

Given Theorem \ref{CGthm2}, we can prove our main result as stated in Theorem \ref{CGthm}.
\begin{proof}[Proof of Theorem~\ref{CGthm}] Let $\e_5(n,p,\lambda,\Lambda,\tl,\tL)$ and $h_0$ be as in Theorem \ref{CGthm2}.
By Theorem \ref{pst}, there is a small, universal constant $\tau(n,\lambda,\Lambda)\in (0, 1/8)$ such that if
 $x\in\overline{S_u(x_0, h)}=:D$ then
 $$S_u(x,8\tau h)\subset S_u(x_0, 2h).$$
We first consider the case $h\geq h_0$.  
 By Lemma \ref{Vita_cov} (ii), we can select from $D\subset \bigcup_{x\in D} S_u (x,\tau h_0)$ a finite covering 
 \begin{equation}
  \label{finite_cov}
  D\subset \bigcup_{i=1}^m S_u (x_i,\tau h_0)
 \end{equation}
where the sections $\left\{S_u(x_i, \frac{\tau}{K} h_0)\right\}$ are mutually disjoint. Using the volume estimates of sections in Lemma \ref{vol-sec1} and
$$\bigcup_{i=1}^m S_u(x_i, \frac{\tau}{K} h_0)\subset S_u(x_0, 2h),$$
we deduce that
$$m c(n,\lambda,\Lambda) |h_0|^{n/2}\leq\sum_{i=1}^m |S_u(x_i, \frac{\tau}{K} h_0)|\leq |S_u(x_0, 2h)|\leq C(n,\lambda,\Lambda)|h|^{n/2}.$$
Hence
$$m\leq C(n,\lambda,\Lambda) \left(\frac{h}{h_0}\right)^{n/2}.$$
Applying Theorem \ref{CGthm2} to each section $S_u(x_i, 8\tau h_0)\subset S_u(x_i, 8\tau h)\subset S_u(x_0, 2h)\subset\subset\Omega,$
we get
\begin{equation*}
 \sup_{S_{u}(x_i, \tau h_0)} v^n\leq C(n, \lambda, \Lambda,\tl,\tL) \left(\inf_{S_{u}(x_i, \tau h_0)} v^n + h_0^{n/2}\|f\|^n_{L^n(S_u(x_0, 2h))}\right).
\end{equation*}
Therefore, by the volume estimate of sections in Lemma \ref{vol-sec1}, we find
\begin{equation*}
 \sup_{S_{u}(x_i, \tau h_0)} v^n\leq C(n, \lambda, \Lambda,\tl,\tL) \left(\inf_{S_{u}(x_i, \tau h_0)} v^n + |S_u(x_i, \frac{\tau}{K} h_0)|\|f\|^n_{L^n(S_u(x_0, 2h))}\right).
\end{equation*}
Combining this estimate with (\ref{finite_cov}), we discover
\begin{equation*}
 \sup_{S_{u}(x_0, h)} v^n\leq C(n, \lambda, \Lambda,\tl,\tL)^m \left(\inf_{S_{u}(x_0, h)} v^n + |S_u(x_0, 2h)|\|f\|^n_{L^n(S_u(x_0, 2h))}\right)
\end{equation*}
from which we deduce
$$\sup_{S_{u}(x_0, h)} v\leq C(n, \lambda, \Lambda,\tl,\tL)^{N(h, h_0)} \left(\inf_{S_{u}(x_0, h)} v + h^{1/2}\|f\|_{L^n(S_u(x_0, 2h))}\right)$$
as asserted. Here, we recall that \begin{equation*}
 N(h, h_0)=\max\left\{1, \left(\frac{h}{h_0}\right)^{n/2}\right\}.
\end{equation*}
It remains to consider the case $h\leq h_0$. The proof is similar to that above but we include it here.
By Lemma \ref{Vita_cov} (ii), we can select from $D\subset \bigcup_{x\in D} S_u (x,\tau h)$ a finite covering 
 \begin{equation}
  \label{finite_cov2}
  D\subset \bigcup_{i=1}^m S_u (x_i,\tau h)
 \end{equation}
where the sections $\left\{S_u(x_i, \frac{\tau}{K} h)\right\}$ are mutually disjoint. Using the volume estimates of sections in Lemma \ref{vol-sec1} and
$\displaystyle\bigcup_{i=1}^m S_u(x_i, \frac{\tau}{K} h)\subset S_u(x_0, 2h),$
we deduce that
$m\leq C(n,\lambda,\Lambda)$.
Applying Theorem \ref{CGthm2} to each section $S_u(x_i, 8\tau h)\subset S_u(x_0, 2h)\subset\subset\Omega,$
we get
\begin{equation*}
 \sup_{S_{u}(x_i, \tau h)} v\leq C(n, \lambda, \Lambda,\tl,\tL) \left(\inf_{S_{u}(x_i, \tau h)} v + h^{1/2}\|f\|_{L^n(S_u(x_0, 2h))}\right).
\end{equation*}
Combining this estimate with (\ref{finite_cov2}), we discover
$$\sup_{S_{u}(x_0, h)} v\leq C(n, \lambda, \Lambda,\tl,\tL)\left(\inf_{S_{u}(x_0, h)} v + h^{1/2}\|f\|_{L^n(S_u(x_0, 2h))}\right)$$
as asserted. The proof of Theorem \ref{CGthm} is complete.
\end{proof}

The rest of the section is devoted to the proof of Theorem \ref{CGthm2}.
\begin{proof}[Proof of Theorem~\ref{CGthm2}] 
Let $\delta\in (0, 1)$ and $M>1$ be the constants in Proposition \ref{decay_rem} and $\e\in (0, 1)$ be the constant in Theorem \ref{decay_thm}.
Let $\e_4$ and $c_\ast$ be as in Lemma \ref{res_lem}. We choose $\e_5$ so that $$C\e_5\leq \frac{\e_4}{16 M}$$
where $C$ is the universal constant appearing in (\ref{tbLp})- (\ref{tfLn}).
We rescale our equation, the domain, and functions as in Section \ref{AIP_sec}. Since $h\leq h_0$, by (\ref{tbLp}) and (\ref{tcLn}), the functions $\tilde b, \tilde c$
satisfy on the normalized $\tilde S= T^{-1}(S_u(x_0, h))= S_{\tilde u}(y, 4t_0)$ the bound
\begin{equation}\|\tilde b\|_{L^p(\tilde S)}+ \|\tilde c\|_{L^n(\tilde S)} \leq C\e_5\leq \frac{\e_4}{16 M}.
 \label{bcsmall}
\end{equation}
We need to show that
\begin{equation}\sup_{S_{\tilde u}(y, t_0/2)} \tilde v\leq C(n, \lambda, \Lambda,\tl,\tL) \left(\inf_{S_{\tilde u}(y, t_0/2)} \tilde v+ \|\tilde f\|_{L^n(\tilde S)}\right).
 \label{HI4}
\end{equation}
Without loss of generality, we can assume that $t_0=1$ and $y=0$. 
{\it We now drop the tildes in our argument.}
By changing coordinates and subtracting an affine function from $u$,
we can assume that $u\geq 0, u(0)=0$, $Du(0)=0$.
For simplicity, we denote $S_t= S_u(0, t)$. The theorem follows from the following Claim.\\
{\bf Claim.} If $\inf_{S_u(0, 1/2)} v \leq 1 ~\text{and}~\|f\|_{L^n(S)}\leq \frac{\e_4}{16 M}$ then for some universal constant $C$, we have $\sup_{S_u(0, 1/2)} v \leq C.$

Indeed, for each $\tau>0$, the function $$v^{\tau}= \frac{v}{\inf_{S_u(0, 1/2)} v  +\tau + 16M \|f\|_{L^n(S)}/\e_4 }$$
satisfies
$a^{ij} v^{\tau}_{ij} + b\cdot Dv^{\tau} + c v^{\tau}= f^{\tau}$
where
$$f^{\tau}=  \frac{f}{\inf_{S_u(0, 1/2)} v  +\tau + 16M \|f\|_{L^n(S)}/\e_4 }.$$
Thus,
$$\|b\|_{L^p(S)}+ \|c\|_{L^n(S)} \leq \frac{\e_4}{16M}, \|f^{\tau}\|_{L^n(S)}\leq \frac{\e_4}{16M}.$$
We apply the Claim to $v^{\tau}$ to obtain
$$\sup_{S_u(0,1/2)} v \leq C \left(\inf_{S_u(0, 1/2)} v  +\tau + 16M \|f\|_{L^n(S)}/\e_4\right).$$
Sending $\tau\rightarrow 0$, we get the conclusion of the theorem.

We now prove the Claim, following the line of argument in Imbert-Silvestre \cite{IS} in the case $u(x)=\frac{1}{2}|x|^2$.
Let $\beta >0$ be a universal constant to be determined later and let $h_t(x) = t(1-u(x))^{-\beta}$ be
defined in $S_u(0, 1)$. We consider the minimum value of $t$ such that
$h_t \geq v$ in $S_u(0, 1)$. It suffices to show that $t$ is bounded by a universal constant $C$, because we have then
$$\sup_{S_u(0, 1/2)} v\leq C \sup_{S_u(0, 1/2)} (1- u)^{-\beta}\leq 2^\beta C.$$
If $t \le 1$, we are
done. Hence, we further assume that $t \ge 1$. 

Since $t$ is chosen to be the minimum value such that $h_t \geq
v$, then there must exist some $x_0 \in S_u(0, 1)$ such that $h_t(x_0) =
v(x_0)$. Let $r = (1-u(x_0))/2$.  Let $H_0 := h_t(x_0) = t(2r)^{-\beta}
\ge 1$. By Theorem \ref{pst},
there is a small constant $\hat c$ and large constant $$p_1=(n+1)\mu^{-1}$$ such that 
\begin{equation}S_u(x_0, \hat K \hat c r^{p_1})\subset S_u(0, 1),
 \label{sectx0}
\end{equation}
 where $\hat K$ is the constant in Lemma 
\ref{KKlem}.

We bound $t$ by estimating the measure of the set $\{v \geq H_0/2\} \cap
S_u(x_0, \hat c r^{p_1})$ from above and below. The estimate from above can be done using Theorem
\ref{decay_thm}. First, recalling $S= \tilde S= S_u(0, 4)$, we find from (\ref{bcsmall}), $\|f\|_{L^n(S)}\leq \frac{\e_4}{16 M} $ and (\ref{sectx0})
that
$$\|b\|_{L^p(S_u(x_0, \hat K \hat c r^{p_1}))}+ \|c\|_{L^n(S_u(x_0, \hat K \hat c r^{p_1}))} + \|f\|_{L^n(S_u(x_0, \hat K \hat c r^{p_1}))}\leq \e_4.$$
Then, Theorem \ref{decay_thm} tells us that
\begin{equation} \label{up_H}  
|\{v>H_0/2\} \cap S_u(x_0, \hat c r^{p_1})|\leq
CH_0^{-\eps}|S_u(x_0, \hat c r^{p_1})|\leq C H_0^{-\eps}|S_u(0, 1)| \leq C t^{-\eps} (2r)^{\beta \eps}.
\end{equation}

To estimate the measure of $\{v \geq H_0/2\} \cap
S_u(x_0, \hat c r^{p_1})$ from below, we apply Lemma \ref{res_lem} to $C_1-C_2v$ on a small but definite fraction of this section.  Let $\rho$ be the small universal
constant and $\beta$ be a large universal constant such that
\begin{equation}\label{beta_choice}
 M \left( (1-\rho)^{-\beta} - 1 \right)  =
 \frac 12,~
\beta  \geq \frac{n(n+1)}{2\mu\e}.
\end{equation}
Consider the section $S_u(x_0, c_1 r^{p_1}) $ where $c_1\leq c_\ast$. We claim that $1- u(x)\geq 2r-2\rho r$ in this section if $c_1$
is universally small. Indeed, if $x\in S_u(x_0, c_1 r^{p_1}) $
then by Theorem \ref{sec-size}, we have $|x-x_0| \leq C (c_1 r^{p_1})^{\mu} \leq c_1^{\mu/2} \rho r$ for small, universal $c_1$. Hence, by the gradient estimate in Lemma \ref{slope-est}
\begin{eqnarray*}
1-u(x) = 2r + u(x_0)- u(x) \geq 2r -(\sup_{S_u(0, 1)}|\nabla u|) |x-x_0| \geq 2r -C(n,\lambda,\Lambda)  c_1^{\mu/2} \rho r\geq 2r-2\rho r.
\end{eqnarray*}
The maximum of $v$ in the section $S_u(x_0, c_1 r^{p_1})$ is at most the maximum
of $h_t$ which is not greater than $t(2r-2\rho r)^{-\beta} =
(1-\rho)^{-\beta} H_0$. 
Define the 
following functions for $x\in S_u(x_0, c_1 r^{p_1}) $

$$ w(x) = \frac{(1-\rho)^{-\beta} H_0
  -v(x)}{\left((1-\rho)^{-\beta} - 1
  \right) H_0},~\text{and }\bar f =\frac{c(1-\rho)^{-\beta} H_0-f}{\left((1-\rho)^{-\beta} - 1
  \right) H_0}. $$ Note that $w(x_0) = 1$, and $w$ is a non-negative solution of 
\begin{equation}a^{ij}w_{ij} + b \cdot Dw +  c w= \bar f~\text{in}~S_u(x_0, c_1 r^{p_1}). 
 \label{weq}
\end{equation}
Observe that, by (\ref{beta_choice}), (\ref{bcsmall}) and the assumption on $f$ in our Claim,
\begin{eqnarray*}
 \|\bar f\|_{L^n(S)}&\leq& \frac{c(1-\rho)^{-\beta}}{\left((1-\rho)^{-\beta} - 1
  \right) } \|c\|_{L^n(S)}+ \frac{1}{\left((1-\rho)^{-\beta} - 1
  \right) } \|f\|_{L^n(S)}\\
  &=& (2M+1) \|c\|_{L^n(S)} + 2M \|f\|_{L^n(S)} \leq \e_4/2.
\end{eqnarray*}
Therefore,
\begin{equation}\|b\|_{L^p(S_u(x_0, c_1 r^{p_1}))}+ \|c\|_{L^n(S_u(x_0, c_1 r^{p_1}))} + \|\bar f\|_{L^n(S_u(x_0, c_1 r^{p_1}))}\leq \e_4.
 \label{bcbarf}
\end{equation}
From (\ref{weq}) and (\ref{bcbarf}), we can use Lemma \ref{res_lem} to obtain
\begin{equation} |\{w \leq M\} \cap S_u(x_0, 1/4 c_1 r^{p_1})| \geq \delta |S_u(x_0, 1/4 c_1 r^{p_1})|. 
 \label{finalest}
\end{equation}
In terms of the original function $v$, this is an estimate of a subset of $S_u(x_0, 1/4 c_1 r^{p_1})$ where $v$ is larger than
\[ H_0 \left((1-\rho)^{-\beta} - M \left(
( 1-\rho)^{-\beta} - 1 \right) \right) \geq
\frac{H_0}2, \]
because of the choice of $\rho$ and $\beta$ in (\ref{beta_choice}). Thus, we obtain from (\ref{finalest}) the estimate
\[ |\{v \geq H_0/2\} \cap S_u(x_0, c_1 r^{p_1})| \geq \delta |S_u(x_0, c_1 r^{p_1})|.\]
Recall that $p_1= (n+1)\mu^{-1}$. In view of  \eqref{up_H}, and the volume estimate on sections in Lemma \ref{vol-sec1}, we find
$$C t^{-\eps} (2r)^{\beta \eps}\geq  \delta |S_u(x_0, c_1 r^{p_1})|\geq c_3(n,\lambda,\lambda) r^{np_1/2} = c_3(n,\lambda,\lambda) r^{\frac{n(n+1)}{2\mu}},$$
for some universally small $c_3$.
By the choice of $\beta$ in (\ref{beta_choice}), we find that $t$ is universally bounded. The proof of Theorem \ref{CGthm2} is now complete.
\end{proof}
{\bf Acknowledgments.} The author would like to thank the anonymous referee for the pertinent comments and the careful reading 
together with the numerous corrections of the original manuscript.


\begin{thebibliography}{xx}
\bibitem{AS} Armstrong, S. N.; Smart, C. K.
Regularity and stochastic homogenization of fully nonlinear equations without uniform ellipticity. 
{\it Ann. Probab.} {\bf 42} (2014), no. 6, 2558--2594. 
\bibitem{Cab} Cabr\'e, X. Nondivergent elliptic equations on manifolds with nonnegative curvature. {\it Comm. Pure Appl. Math.} {\bf 50} (1997), no. 7, 623--665.
\bibitem{C2} Caffarelli, L. A.  Interior $W^{2,p}$
estimates for solutions to the Monge-Amp\`ere equation.
{\it Ann. of Math.} {\bf 131} (1990), no. 1, 135--150.
\bibitem{CC} Caffarelli, L. A.; ~Cabr\'e, X. {\em Fully nonlinear elliptic
equations.} American Mathematical Society Colloquium Publications,
volume 43, 1995.
\bibitem{CG} Caffarelli, L.A.; Guti\'errez, C.E. Properties of the solutions of the linearized Monge-Amp\`ere equations. {\it Amer. J. Math.} {\bf 119} (1997), no. 2, 423--465.
\bibitem{DPFS} De Philippis, G.; Figalli, A.; Savin, O. A note on interior $W^{2, 1+\e}$ estimates for the Monge-Amp\`ere equation. {\it Math. Ann.} {\bf 357} (2013), 11--22. 
\bibitem{GT} Gilbarg, D.; Trudinger, N.S. {\em Elliptic partial differential equations of second order}, 
Reprint of the 1998 edition. Classics in Mathematics. Berlin: Springer, 2001.
\bibitem{G} Guti\'errez, C. E. {\em The Monge-Amp\`ere equation.}
  Birkha\"user, Boston, 2001.
  \bibitem{IS} Imbert, C.; Silvestre, L. Estimates on elliptic equations that hold only where the gradient is large.
   {\it J. Eur. Math. Soc.} {\bf 18} (2016), no. 6, 1321--1338.
\bibitem{John} John, F. Extremum problems with inequalities as subsidiary conditions. {\it Courant Anniversary Vol.}, pages 187-204, 1948.
  \bibitem{KS2} Krylov, N. V.; Safonov, M. V.
A property of the solutions of parabolic equations with measurable coefficients. (Russian)
{\it Izv. Akad. Nauk SSSR Ser. Mat.} {\bf 44} (1980), no. 1, 161--175, 239. 
\bibitem{Mal} Maldonado, D. Harnack's inequality for solutions to the linearized Monge-Amp\`ere operator with lower-order terms. 
{\it J. Differential Equations} {\bf 256} (2014), no. 6, 1987--2022.
\bibitem{Mooney} Mooney, C. Harnack inequality for degenerate and singular elliptic equations with unbounded drift. 
{\it J. Differential Equations} {\bf 258} (2015), no. 5, 1577--1591.
\bibitem{Safo} Safonov, M. V. Non-divergence elliptic equations of second order with unbounded drift. 
 {\it Nonlinear partial differential equations and related topics}, 211--232, Amer. Math. Soc. Transl. Ser. 2, 229, Amer. Math. Soc., Providence, RI, 2010. 
\bibitem{Sa} Savin, O. Small perturbation solutions for elliptic equations. {\it Comm. Partial Differential Equations} {\bf 32} (2007), no. 4-6, 557--578.
\bibitem{Sch} Schmidt, T. $W^{2, 1+\e}$ estimates for the Monge-Amp\`ere equation. {\it Adv. Math.} {\bf 240} (2013), 672--689. 
\bibitem{Tr} Trudinger, N. S. Local estimates for subsolutions and supersolutions of general second order elliptic quasilinear equations. 
{\it Invent. Math.} {\bf 61} (1980), no. 1, 67--79.
\bibitem{TW3} Trudinger, N. S.; Wang, X. J. The Monge-Amp\`{e}re equation and its 
geometric applications.  {\it Handbook of geometric analysis.} No. 1,  467--524, {\bf Adv. Lect. Math. (ALM), 7}, Int. Press, Somerville, MA, 2008.
\bibitem{W} Wang, Y. Small perturbation solutions for parabolic equations. {\it Indiana Univ. Math. J.} {\bf 62} (2013), no. 2, 671--697.
\end{thebibliography}
\end{document}